\newcommand{\+}{\protect\nobreakdash-}
\renewcommand{\:}{\colon}
\newcommand{\rarrow}{\longrightarrow}
\newcommand{\larrow}{\longleftarrow}
\newcommand{\bu}{{\text{\smaller\smaller$\scriptstyle\bullet$}}}
\newcommand{\lrarrow}{\mskip.5\thinmuskip\relbar\joinrel\relbar\joinrel
 \rightarrow\mskip.5\thinmuskip\relax}
\newcommand{\llarrow}{\mskip.5\thinmuskip\leftarrow\joinrel\relbar
 \joinrel\relbar\mskip.5\thinmuskip\relax}
\DeclareMathOperator{\Hom}{Hom}
\DeclareMathOperator{\Gen}{Gen}
\DeclareMathOperator{\Rel}{Rel}
\newcommand{\sA}{\mathsf A}
\newcommand{\sB}{\mathsf B}
\newcommand{\sC}{\mathsf C}
\newcommand{\sD}{\mathsf D}
\newcommand{\sE}{\mathsf E}
\newcommand{\sG}{\mathsf G}
\newcommand{\sH}{\mathsf H}
\newcommand{\sJ}{\mathsf J}
\newcommand{\sK}{\mathsf K}
\newcommand{\sL}{\mathsf L}
\newcommand{\sM}{\mathsf M}
\newcommand{\sS}{\mathsf S}
\newcommand{\sT}{\mathsf T}
\newcommand{\Sets}{\mathsf{Sets}}
\newcommand{\Fun}{\mathsf{Fun}}
\newcommand{\Cat}{\mathsf{Cat}}
\newcommand{\Psd}{\mathsf{Psd}}
\newcommand{\TwoCat}{\mathbf{Cat}}
\newcommand{\TwoCAT}{\mathbf{CAT}}
\newcommand{\ACC}{\mathbf{ACC}}
\newcommand{\cF}{\mathcal F}
\newcommand{\cO}{\mathcal O}
\newcommand{\cC}{\mathcal C}
\newcommand{\epi}{\mathsf{epi}}
\newcommand{\mono}{\mathsf{mono}}
\newcommand{\srj}{\mathsf{surj}}
\newcommand{\ses}{\mathsf{ses}}
\newcommand{\fl}{\mathsf{fl}}
\newcommand{\dfl}{{\operatorname{\mathsf{-fl}}}}
\newcommand{\Modl}{{\operatorname{\mathsf{--Mod}}}}
\newcommand{\Comodl}{{\operatorname{\mathsf{--Comod}}}}
\newcommand{\Qcoh}{{\operatorname{\mathsf{--Qcoh}}}}
\newcommand{\id}{\mathrm{id}}
\newcommand{\boZ}{\mathbb Z}
\newcommand{\Section}[1]{\bigskip\section{#1}\medskip}
\theoremstyle{plain}
\newtheorem{thm}{Theorem}[section]
\newtheorem{lem}[thm]{Lemma}
\newtheorem{prop}[thm]{Proposition}
\newtheorem{cor}[thm]{Corollary}
\theoremstyle{definition}
\newtheorem{rem}[thm]{Remark}
\begin{document}

\author{Leonid Positselski}

\address{Institute of Mathematics, Czech Academy of Sciences \\
\v Zitn\'a~25, 115~67 Praha~1 \\ Czech Republic}

\email{positselski@math.cas.cz}

\title{Notes on limits of accessible categories}

\begin{abstract}
 Let $\kappa$ be a regular cardinal, $\lambda<\kappa$ be a smaller
infinite cardinal, and $\sK$ be a $\kappa$\+accessible category
where colimits of $\lambda$\+indexed chains exist.
 We show that various category-theoretic constructions applied to $\sK$,
such as the inserter and the equifier, produce $\kappa$\+accessible
categories $\sE$ again, and the most obvious expected description of
the full subcategory of $\kappa$\+presentable objects in $\sE$ in
terms of $\kappa$\+presentable objects in $\sK$ holds true.
 In particular, if $\sC$ is a $\kappa$\+small category, then
the category of functors $\sC\rarrow\sK$ is $\kappa$\+accessible,
and its $\kappa$\+presentable objects are precisely all the functors
from $\sC$ to the $\kappa$\+presentable objects of~$\sK$.
 We proceed to discuss the preservation of $\kappa$\+accessibility
by conical pseudolimits, lax and oplax limits, and
weighted pseudolimits.
 The results of this paper go back to an unpublished 1977~preprint of
Ulmer.
 Our motivation comes from the theory of flat modules and flat
quasi-coherent sheaves.
\end{abstract}

\maketitle

\tableofcontents

\section*{Introduction}
\medskip

\setcounter{subsection}{-1}

\subsection{{}} \label{prelim-informal-subsecn}
 Let $\kappa$~be a cardinal and $\sK$ be a category such that all
the objects of $\sK$ are $\kappa$\+filtered colimits of (suitably
defined) ``objects of small size relative to~$\kappa$\,''.
 Suppose $\sE$ is the category of objects from $\sK$ or collections
of objects from $\sK$ with a certain additional structure and/or
some equations imposed.
 Is every object of $\sE$ a $\kappa$\+filtered colimit of objects whose
underlying objects from $\sK$ have small size relative to~$\kappa$\,?

\subsection{{}}
 To specify the context of the discussion, let $\kappa$~be
a regular cardinal and $\sK$ be a $\kappa$\+accessible category
(in the sense of~\cite[\S2.1]{MP} or~\cite[Chapter~2]{AR}).
 Let $\sC$ be a $\kappa$\+small category, and let $\sE=\Fun(\sC,\sK)$
be the category of functors $\sC\rarrow\sK$.
 Ideally, one may wish to claim that the category $\Fun(\sC,\sK)$ is
$\kappa$\+accessible and its $\kappa$\+presentable objects are precisely
all the functors $\sC\rarrow\sK_{<\kappa}$, where $\sK_{<\kappa}$ is
the full subcategory of $\kappa$\+presentable objects in~$\sK$.
 But is it true?

 The ``ideal'' state of affairs described in the previous paragraph
was claimed as a general result in a 1988 paper~\cite[Lemma~5.1]{Mak}.
 A general outline of a proof of the lemma was presented in~\cite{Mak};
the details were declared to be ``direct calculations'' and omitted.
 A refutation came in the recent preprint~\cite[Theorem~1.3]{Hen}.
 The ideal state of affairs does not hold in general.

\subsection{{}} 
 The assertions of~\cite[Theorem~1.3]{Hen} provide a complete
characterization of all small categories $\sC$ such that the ``ideal''
statement holds \emph{for all $\kappa$\+accessible categories~$\sK$}.
 All such categories $\sC$ are essentially $\kappa$\+small, but being
essentially $\kappa$\+small is \emph{not} enough.
 The category $\sC$ needs to be also \emph{well-founded} in
the sense of the definition in~\cite{Hen}.

 But are there some $\kappa$\+accessible categories $\sK$ that are
better behaved than some other ones, with respect to the question
at hand?
 Another theorem from~\cite{Hen} tells that there are.
 According to~\cite[Theorem~1.2]{Hen}, if the category $\sC$ is
$\kappa$\+small and the category $\sK$ is locally $\kappa$\+presentable
(in the sense of~\cite{GU} or~\cite[Chapter~1]{AR}), then the functor
category $\Fun(\sC,\sK)$ is locally $\kappa$\+presentable and its full
subcategory of $\kappa$\+presentable objects is
$\Fun(\sC,\sK_{<\kappa})\subset\Fun(\sC,\sK)$.

\subsection{{}}
 Are there any better behaved $\kappa$\+accessible categories beyond
the locally $\kappa$\+pre\-sent\-able ones?
 The present paper purports to answer this question by generalizing
the result of~\cite[Theorem~1.2]{Hen}.

 We show that the following much weaker version of local presentability
is sufficient to guarantee the ``ideal state of affairs'': it is
enough to assume existence of an infinite cardinal $\lambda<\kappa$
such that colimits of all $\lambda$\+indexed chains of objects and
morphisms exist in~$\sK$.
 If this is the case and $\sK$ is $\kappa$\+accessible, then for any
$\kappa$\+small category $\sC$ the category $\Fun(\sC,\sK)$ is also
$\kappa$\+accessible, and the $\kappa$\+presentable objects of
$\Fun(\sC,\sK)$ are precisely all the functors
$\sC\rarrow\sK_{<\kappa}$.
 This is the result of our Theorem~\ref{nonadditive-diagram-theorem}.

 Let us mention that the idea of our condition on a category $\sK$
involving a pair of cardinals $\lambda<\kappa$ is certainly not new.
 It appeared in the discussion of \emph{pseudopullbacks}
in~\cite[Proposition~3.1]{CR} and~\cite[Theorem~2.2]{RR} (and our
arguments in this paper bear some similarity to the one in~\cite{CR}).
 The fact that this condition is sufficient for the ``ideal'' result
on accessibility of diagram categories $\Fun(\sC,\sK)$ (our
Theorem~\ref{nonadditive-diagram-theorem}) seems to be if not quite
new, then a ``well-forgotten old'' discovery, however.

\subsection{{}}
 The discussion in Section~\ref{prelim-informal-subsecn} suggests that
we are also interested in other category-theoretic constructions beyond
the categories of functors or diagrams; and indeed we are.

 \emph{Limits of accessible categories} are mentioned in the title of
this paper.
 There are many relevant concepts of limits of categories, the most
general ones being the weighted pseudolimits or weighted
bilimits~\cite[\S5.1]{MP}, \cite{Kel}, \cite{BKPS}.
 All of them can be built from certain elementary building blocks.
 
 We discuss the \emph{Cartesian product} (easy), the \emph{equifier}
(a representative case for our techniques), the \emph{inserter}
(difficult), and the \emph{pseudopullback} (for which our result is
already known in relatively recent literature~\cite{CR,RR}), as well
as the nonadditive and the additive/$k$\+linear diagram categories.
 The pseudopullbacks and the diagram categories are built from
the products, the inserters, and the equifiers.

 In fact, all weighted pseudolimits and weighted bilimits can be built
from products, inserters, and equifiers, up to category
equivalence~\cite{Kel,BKPS}.
 Hence the importance of our detailed discussion of the products,
the inserters, and the equifiers in the general context of limits
of accessible categories.

\subsection{{}}
 In all the settings (with the exception of the trivial case of
the Cartesian products), our results are very similar.
 The main assumptions are that $\kappa$~is a regular cardinal and
$\lambda<\kappa$ is a smaller infinite cardinal (so the case of
finitely accessible categories, $\kappa=\aleph_0$, is excluded).
 The category $\sK$ is assumed to be $\kappa$\+accessible with
colimits of $\lambda$\+indexed chains.
 If this is the case, then the category $\sE$ of (collections of)
objects from $\sK$ with an additional structure satisfying some
equations is also $\kappa$\+accessible (again with colimits of
$\lambda$\+indexed chains), and the $\kappa$\+presentable objects
of $\sE$ are precisely those whose underlying objects are
$\kappa$\+presentable in~$\sK$.

 We do not dare to speculate on what the author of the paper~\cite{Mak}
might have in mind back in 1988, but the proofs of our results seem
to follow the general outline suggested in~\cite[proof of
Lemma~5.1]{Mak}.
 They are, indeed, ``direct calculations'' (which, however, get
complicated at times).

\subsection{{}}
 In fact, our results go back all the way to late 1970s, to
an unpublished 1977~preprint of Ulmer~\cite{Ulm}.
 The very concept and terminology of an \emph{accessible category} was
only introduced by Makkai and Par\'e in their 1989~book~\cite{MP}.
 Accordingly, the exposition in~\cite{Ulm} was written mainly in
the generality of locally presentable categories (which had been
known since the 1971~book of Gabriel and Ulmer~\cite{GU}).

 The main results of~\cite{Ulm} relevant in our context
are~\cite[Theorem~3.8 and Corollary~3.9]{Ulm}.
 These are stated for locally presentable categories, followed by
a remark~\cite[Remark~3.11(II)]{Ulm} explaining that the assertions
are actually valid for some (what we would now call) accessible 
categories.
 This work of Ulmer was subsequently taken up and developed in
the 1984~dissertation of Bird~\cite{Bir}, which was also written
in the generality of locally presentable categories.
 Ulmer's remark~\cite[Remark~3.11(II)]{Ulm} was not taken up, and
apparently remained almost forgotten.

 The topic of limits of accessible categories was studied by
Makkai and Par\'e~\cite[\S5.1]{MP} using methods which seem to be
quite different from those of Ulmer.
 The Limit Theorem of Makkai and Par\'e~\cite[Theorem~5.1.6]{MP}
claimed that all weighted bilimits of accessible categories are
accessible, but offered no cardinality estimate on the accessibility
rank.
 The fact that a tight estimate can be obtained from Ulmer's results
was not realized.
 See our Corollary~\ref{weighted-pseudolimit-corollary}.

 The present author learned about the existence of Ulmer's preprint
from~\cite[paragraph after Pseudopullback Theorem~2.2]{RR}, where
the knowledge about Ulmer's work is attributed to Porst.
 Still, no traces of such knowledge can be found in Porst's own
earlier paper~\cite{Por} (cf.~\cite[Remark~3.2]{Pflcc} and~\cite{Pcor}).
 I~only got hold of my copy of Ulmer's preprint after the first version
of the present paper, with my own detailed proofs of the main results,
was already available on the \texttt{arXiv}.

\subsection{{}}
 Let us explain our motivation now.
 In terms of the intended applications, we are primarily interested
in the ``minimal cardinality'' case $\kappa=\aleph_1$ and
$\lambda=\aleph_0$.
 The examples we care about arise from flat modules over rings,
flat quasi-coherent sheaves over schemes, flat comodules, and
flat contramodules.

 It is shown in the preprint~\cite[Theorem~2.4]{PS6} that
the category $X\Qcoh_\fl$ of flat quasi-coherent sheaves on
a quasi-compact quasi-separated scheme $X$ is $\aleph_1$\+accessible.
 More genenerally, the same holds for any countably quasi-compact,
countably quasi-separated scheme~\cite[Theorem~3.5]{PS6}.
 The $\aleph_1$\+presentable objects of $X\Qcoh_\fl$ are the locally
countably presentable flat quasi-coherent sheaves, i.~e.,
the quasi-coherent sheaves $\cF$ on $X$ such that the $\cO_X(U)$\+module
$\cF(U)$ is flat and countably presented for all affine open subschemes
$U\subset X$ (equivalently, for the affine open subschemes $U_\alpha$
appearing in some fixed affine open covering
$X=\bigcup_\alpha U_\alpha$ of the scheme~$X$).
 Obviously, all directed colimits, and in particular directed colimits
of $\aleph_0$\+indexed chains, exist in $\sK=X\Qcoh_\fl$.
 So the results of this paper are applicable to this category.

 The results of~\cite{PS6} were extended to certain noncommuative
stacks and noncommutative ind-affine ind-schemes in
the preprint~\cite{Pflcc}.
 Specifically, let $\cC$ be a (coassociative, counital) coring over
a noncommutative ring~$A$.
 According to~\cite[Theorem~3.1]{Pflcc}, the category of $A$\+flat
left $\cC$\+comodules $\cC\Comodl_{A\dfl}$ is $\aleph_1$\+accessible.
 The $\aleph_1$\+presentable objects of of $\cC\Comodl_{A\dfl}$ are
the $A$\+countably presentable $A$\+flat left $\cC$\+comodules.
 Once again, it is obvious that all directed colimits exist in
$\cC\Comodl_{A\dfl}$; so the results of the present paper can be
applied.
 There is also a version for flat contramodules over certain
topological rings~\cite[Theorem~10.1]{Pflcc}, where the results of
the present paper are applicable as well.

\subsection{{}}
 Some results about constructing $A$\+pure acyclic complexes
of $A$\+flat $\cC$\+comod\-ules as $\aleph_1$\+filtered colimits of
$A$\+pure acyclic complexes of $A$\+countably presentable
$A$\+flat $\cC$\+comodules are discussed in~\cite[Section~4]{Pflcc}.
 A contramodule version can be found in~\cite[Section~11]{Pflcc}.
 The techniques developed in the present paper are used throughout
the current (new) versions of the papers~\cite{PS6} and~\cite{Pflcc}.
 The same methods are also used in the preprint~\cite{Pres}, where
accessibility of categories of modules of finite flat dimension and
two-sided/F\+totally acyclic flat resolutions is discussed, and
in the preprint~\cite{Pcor}, where we discuss local presentability
and accessibility ranks of the categories of corings and coalgebras
over rings.

 In the present paper, we do not go into any details on sheaves,
comodules, or contramodules, restricting ourselves to ``toy examples''
of diagrams and complexes of modules over a noncommutative ring~$R$.
 It is easy to see that the category of flat left $R$\+modules
$R\Modl_\fl$ is $\kappa$\+accessible for any regular cardinal~$\kappa$;
the $\kappa$\+presentable objects of $R\Modl_\fl$ are those
flat $R$\+modules that are $\kappa$\+presentable in the category of
arbiratry $R$\+modules $R\Modl$.
 Applying the results of this paper, we obtain descriptions of
diagrams of flat modules and pure acyclic complexes of flat modules
as directed colimits (recovering, in particular, a weaker version of
a result from the papers~\cite{EG,Neem} with very general
category-theoretic methods).

\subsection*{Acknowledgement}
 I~am grateful to Ji\v r\'{\i} Rosick\'y for bringing
the preprint~\cite{Hen} to my attention, as well as for providing
me with a copy of Ulmer's preprint~\cite{Ulm}.
 Simon Henry suggested to include some general, abstract material on
lax limits and weighted pseudolimits; I~followed this suggestion by
writing Sections~\ref{2cat-prelim-secn}, \ref{conical-secn},
and~\ref{weighted-secn}.
 The idea to include Corollary~\ref{weighted-pseudolimit-corollary}
is also due to Simon.
 I~want to thank Jan \v St\!'ov\'\i\v cek for helpful conversations.
 The author is supported by the GA\v CR project 23-05148S and
the Czech Academy of Sciences (RVO~67985840).

\Section{Preliminaries} \label{preliminaries-secn}

 We use the book~\cite{AR} as the main background reference source on
the foundations of the theory of accessible categories.

 Let $\kappa$~be a regular cardinal.
 We refer to~\cite[Definition~1.4, Theorem~1.5, Definition~1.13(1),
and Remark~1.21]{AR} for the discussion of \emph{$\kappa$\+directed
posets} vs.\ \emph{$\kappa$\+filtered categories} and, accordingly,
\emph{$\kappa$\+directed} vs.\ \emph{$\kappa$\+filtered diagrams}
and their colimits.

 Let $\sK$ be a category in which all $\kappa$\+directed (equivalently,
$\kappa$\+filtered) colimits exist.
 An object $S\in\sK$ is said to be
\emph{$\kappa$\+presentable}~\cite[Definitions~1.1 and~1.13(2)]{AR}
if the functor $\Hom_\sK(S,{-})\:\sK\rarrow\Sets$ preserves
$\kappa$\+directed colimits.
 We will denote by $\sK_{<\kappa}\subset\sK$ the full subcategory
of $\kappa$\+presentable objects in~$\sK$.

 A category $\sK$ with $\kappa$\+directed colimits is called
\emph{$\kappa$\+accessible}~\cite[Definition~2.1]{AR} if there is
a set $\sS$ of $\kappa$\+presentable objects in $\sK$ such that every
object of $\sK$ is a $\kappa$\+directed colimit of objects from~$\sS$.
 In any $\kappa$\+accessible category, there is only a set of
isomorphism classes of $\kappa$\+presentable objects; in fact,
the $\kappa$\+presentable objects of $\sK$ are precisely the retracts
of the objects from~$\sS$ \,\cite[Remarks~1.9 and~2.2(4)]{AR}.

 Let $\sK$ be a category and $\sS\subset\sK$ be a set of objects.
 For any object $K\in\sK$,
the \emph{canonical diagram}~\cite[Definition~0.4]{AR}
of morphisms from objects from $\sS$ into $K$ is indexed by the small
indexing category $\Delta=\Delta_{\sS,K}$ whose objects $v\in\Delta$
are morphisms $v\:D_v\rarrow K$ into $K$ from objects $D_v\in\sS$.
 A morphism $a\:v\rarrow w$ in $\Delta$ is a morphism $a\:D_v\rarrow
D_w$ in $\sK$ making the triangular diagram $D_v\rarrow D_w\rarrow K$
commutative in~$\sK$.
 The canonical diagram $D=D_{\sS,K}\:\Delta\rarrow K$ takes an object
$v\in\Delta$ to the object $D_v\in\sK$, and acts on the morphisms in
the obvious way.

\begin{lem} \label{accessible-canonical-colimit}
 Let\/ $\sK$ be a $\kappa$\+accessible category and\/ $\sS$ be
a set of representatives of isomorphism classes of $\kappa$\+presentable
objects in\/~$\sK$.
 Then, for every object $K\in\sK$, the canonical diagram $D=D_{\sS,K}$
of morphisms from objects from $\sS$ into $K$ (or in other words,
its indexing category $\Delta=\Delta_{\sS,K}$) is $\kappa$\+filtered.
 The natural morphism\/ $\varinjlim_{v\in\Delta}D_v\rarrow K$ is
an isomorphism in\/~$\sK$.
\end{lem}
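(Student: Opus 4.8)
The plan is to fix once and for all a $\kappa$\+filtered presentation of $K$ and to compare it with the canonical diagram~$\Delta$. By $\kappa$\+accessibility we may write $K=\varinjlim_{i\in I}S_i$ for some small $\kappa$\+directed (equivalently $\kappa$\+filtered) category $I$, a diagram $i\mapsto S_i$ with all $S_i\in\sS$, and a colimit cocone $\sigma_i\:S_i\rarrow K$. Each $\sigma_i$ is by definition an object of $\Delta$, and sending $i\mapsto\sigma_i$, a morphism $i\rarrow i'$ to the induced map $S_i\rarrow S_{i'}$ (which lies over $K$ since $\sigma$ is a cocone), defines a functor $\iota\:I\rarrow\Delta$ with $D\circ\iota$ the given diagram $i\mapsto S_i$. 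The whole argument rests on the two halves of the statement that $\Hom_\sK(S,{-})$ preserves $\kappa$\+filtered colimits for $S\in\sS$: (a)~every morphism $D_v\rarrow K$ with $D_v$ $\kappa$\+presentable factors as $D_v\rarrow S_i\rarrow K$ through some $\sigma_i$; and (b)~if two morphisms $D_v\rightrightarrows S_i$ become equal after composing with $\sigma_i$, then they are already equalized by some connecting morphism $S_i\rarrow S_{i'}$.

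To prove that $\Delta$ is $\kappa$\+filtered, I would verify that every subdiagram of $\Delta$ with fewer than $\kappa$ morphisms admits a cocone (the characterization of~\cite[Remark~1.21]{AR}). Such a subdiagram consists of objects $v_j\:D_{v_j}\rarrow K$ and morphisms $a\:v_j\rarrow v_{j'}$ in $\Delta$, indexed by a set $J$ of size $<\kappa$. Using~(a), factor each $v_j$ through some $\sigma_{i_j}$; since $|J|<\kappa$ and $I$ is $\kappa$\+filtered, choose $i_*\in I$ above all the $i_j$, obtaining maps $g_j\:D_{v_j}\rarrow S_{i_*}$ with $\sigma_{i_*}\circ g_j=v_j$. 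For each of the (fewer than $\kappa$) diagram morphisms $a$, the two maps $g_{j'}\circ a$ and $g_j$ into $S_{i_*}$ agree after composing with $\sigma_{i_*}$; by~(b) each pair is equalized by some connecting morphism, and taking a common upper bound $i_{**}\in I$ (again using $\kappa$\+filteredness of $I$ to absorb the $<\kappa$ conditions into one stage) yields a connecting map $h\:S_{i_*}\rarrow S_{i_{**}}$ equalizing all of them simultaneously. Then $w=\sigma_{i_{**}}$ together with the morphisms $h\circ g_j\:v_j\rarrow w$ is the required cocone in $\Delta$; the empty subdiagram is covered as well, since $I$ is nonempty.

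For the colimit statement, I would show that the functor $\iota\:I\rarrow\Delta$ is \emph{final}, so that colimits over $\Delta$ may be computed over $I$ and $\varinjlim_{v\in\Delta}D_v\cong\varinjlim_{i\in I}S_i=K$ via the natural comparison. Concretely, for each $v\in\Delta$ the comma category $(v\downarrow\iota)$ is nonempty by~(a), and any two of its objects are connected: given factorizations of $v$ through $\sigma_{i_1}$ and $\sigma_{i_2}$, push both up to a common $i_3\in I$ and then apply~(b) to equalize the two resulting maps $D_v\rarrow S_{i_3}$ at some $i_4$, producing a zigzag between the two objects. Finality of $\iota$ then identifies $\varinjlim_\Delta D_v$ with $K$, and unwinding the identification shows the canonical comparison morphism $\varinjlim_{v\in\Delta}D_v\rarrow K$ is an isomorphism.

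The main obstacle is the bookkeeping in the $\kappa$\+filteredness argument: one must make a single cocone compatible with \emph{all} of the (up to $\kappa$ many) morphisms of the given subdiagram at once. This is precisely where both the ``injectivity'' clause~(b) of $\kappa$\+presentability and the $\kappa$\+filteredness of $I$ (to collapse $<\kappa$ simultaneous equalization conditions into a single stage $i_{**}$) are essential; the connectedness half of the finality argument in the preceding paragraph is a localized instance of the same maneuver.
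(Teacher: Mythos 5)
Your proof is correct. Note that the paper does not actually prove this lemma --- its ``proof'' is the single line ``This is \cite[Definition~1.23 and Proposition~2.8(i--ii)]{AR}'' --- so what you have written is in effect a self-contained proof of the cited result, and it is the standard one: fix a $\kappa$-filtered presentation $K=\varinjlim_{i\in I}S_i$ with $S_i\in\sS$, extract from $\kappa$-presentability the factorization property~(a) and the equalization property~(b), and use these to build cocones over $<\kappa$-sized subdiagrams of $\Delta$ and to verify that $\iota\:I\rarrow\Delta$ is final. The one step that genuinely requires the care you gave it is the simultaneous equalization: property~(b) hands you, for each of the fewer than $\kappa$ pairs, its own connecting morphism $i_*\rarrow i'_a$, and to get a single $h\:S_{i_*}\rarrow S_{i_{**}}$ equalizing all pairs at once you need not merely an upper bound of the $i'_a$ but a cocone over the whole $<\kappa$-sized diagram of arrows out of $i_*$, so that all composites $i_*\rarrow i'_a\rarrow i_{**}$ coincide; this is exactly the second clause in the definition of $\kappa$-filteredness of $I$, which your parenthetical correctly invokes. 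With that point made explicit, both halves of the argument (the cocone construction and the nonemptiness-plus-connectedness of the comma categories $(v\downarrow\iota)$) go through as you describe.
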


\begin{proof}
 This is~\cite[Definition~1.23 and Proposition~2.8(i\+-ii)]{AR}.
\end{proof}

 Let $\sK$ be a category with $\kappa$\+directed colimits and
$\sA\subset\sK$ be a class of objects (full subcategory).
 Then we denote by $\varinjlim_{(\kappa)}\sA\subset\sK$ the class of
all objects of $\sK$ that can be obtained as $\kappa$\+directed
colimits of objects from~$\sA$.

 The following proposition is also essentially well-known.
 In the particular case of finitely accessible ($\kappa=\aleph_0$)
additive categories, it was discussed in~\cite[Proposition~2.1]{Len},
\cite[Section~4.1]{CB}, and~\cite[Proposition~5.11]{Kra}.
(The terminology ``finitely presented categories'' was used
in~\cite{CB,Kra} for what are called finitely accessible categories
in~\cite{AR}.)

\begin{prop} \label{accessible-subcategory}
 Let\/ $\sK$ be a $\kappa$\+accessible category and\/
$\sS\subset\sK_{<\kappa}$ be a set of $\kappa$\+presentable objects
in\/~$\sK$.
 Then the full subcategory\/ $\varinjlim_{(\kappa)}\sS\subset\sK$ is
closed under $\kappa$\+directed colimits in\/~$\sK$.
 The category\/ $\varinjlim_{(\kappa)}\sS$ is $\kappa$\+accessible;
the full subcategory of all $\kappa$\+presentable objects of\/
$\varinjlim_{(\kappa)}\sS$ consists of all the retracts of objects
from\/ $\sS$ in\/~$\sK$.
 An object $E\in\sK$ belongs to\/ $\varinjlim_{(\kappa)}\sS$ if
and only if, for every $\kappa$\+presentable object $T\in\sK_{<\kappa}$,
every morphism $T\rarrow E$ in\/ $\sK$ factorizes through an object
from\/~$\sS$.
\end{prop}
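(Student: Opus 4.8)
The plan is to treat the four assertions in turn, the first three being essentially formal consequences of the theory recalled above and the last one being the substantive point. For closure under $\kappa$\+directed colimits, I would invoke the standard interchange fact that a $\kappa$\+directed (equivalently, $\kappa$\+filtered) colimit of $\kappa$\+directed colimits is again a $\kappa$\+directed colimit, indexed by the Grothendieck construction of the indexing data, which remains $\kappa$\+filtered. Since every object of $\varinjlim_{(\kappa)}\sS$ is by definition a $\kappa$\+directed colimit of objects of $\sS$, any $\kappa$\+directed colimit of such objects, computed as in $\sK$, is one too. This closure tells us that $\kappa$\+directed colimits in $\varinjlim_{(\kappa)}\sS$ exist and agree with those in $\sK$, so the full inclusion preserves and reflects them. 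Because the objects of $\sS$ are $\kappa$\+presentable in $\sK$ and the inclusion is full, they remain $\kappa$\+presentable in $\varinjlim_{(\kappa)}\sS$; as they generate every object under $\kappa$\+directed colimits by construction, the category $\varinjlim_{(\kappa)}\sS$ is $\kappa$\+accessible directly from the definition.

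For the description of the $\kappa$\+presentable objects, I would apply the general fact recalled above, \cite[Remarks~1.9 and~2.2(4)]{AR}, that in a $\kappa$\+accessible category with a generating set $\sS$ of $\kappa$\+presentable objects the $\kappa$\+presentable objects are exactly the retracts of objects of $\sS$. The only point to check is that ``retract in $\varinjlim_{(\kappa)}\sS$'' coincides with ``retract in $\sK$'': since the inclusion is full, the splitting data are the same, and any retract of an object $S\in\sS$ is the colimit of the $\aleph_0$\+indexed chain $S\to S\to\cdots$ determined by the idempotent, hence a $\kappa$\+directed colimit of an object of $\sS$ and so itself an object of $\varinjlim_{(\kappa)}\sS$.

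The characterization of membership in $\varinjlim_{(\kappa)}\sS$ is the heart of the matter. The ``only if'' direction is immediate: if $E=\varinjlim_i S_i$ is a $\kappa$\+directed colimit of objects $S_i\in\sS$ and $T\in\sK_{<\kappa}$, then $\Hom_\sK(T,{-})$ preserves this colimit, so any morphism $T\to E$ factors through some $S_i$. For the ``if'' direction, suppose every morphism into $E$ from a $\kappa$\+presentable object factors through an object of $\sS$. I would compare two canonical diagrams over $E$: the full one $\Delta'=\Delta_{\sS',E}$, where $\sS'$ is a set of representatives of all $\kappa$\+presentable objects, whose colimit is $E$ and which is $\kappa$\+filtered by Lemma~\ref{accessible-canonical-colimit}, and the subdiagram $\Delta=\Delta_{\sS,E}$ of morphisms into $E$ from objects of $\sS$. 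The goal is to show that $\Delta$ is $\kappa$\+filtered and that the inclusion $\iota\:\Delta\hookrightarrow\Delta'$ is cofinal (i.e.\ every comma category $w\downarrow\iota$ is nonempty and connected), so that $E=\varinjlim_{\Delta'}D\cong\varinjlim_\Delta D$ exhibits $E$ as a $\kappa$\+filtered colimit of objects of $\sS$.

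The main work, and the expected obstacle, is proving these two properties, both of which rest on the same mechanism: given any morphism $w\:D_w\to E$ with $D_w\in\sK_{<\kappa}$, the hypothesis provides a factorization $w=s\circ g$ through some $s\:D_s\to E$ with $D_s\in\sS$. To see that $\Delta$ is $\kappa$\+filtered, I would take a diagram of size $<\kappa$ in $\Delta$, find a cocone over it in the $\kappa$\+filtered category $\Delta'$ with apex $w$, and push that cocone along $g$ to obtain a cocone in $\Delta$. To see that $\iota$ is cofinal, I would check nonemptiness of $w\downarrow\iota$ directly from the factorization, and connectedness as follows: given two factorizations $(s_1,h_1)$ and $(s_2,h_2)$ of $w$, form a cocone in $\Delta'$ over the finite diagram $s_1\leftarrow w\rightarrow s_2$, factor its apex through some $s_3\in\sS$, and verify that the induced maps $s_1\to s_3\leftarrow s_2$ are morphisms in $w\downarrow\iota$. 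The bookkeeping confirming that all these pushed-forward maps commute with the structure morphisms to $E$ and with the given diagram is routine, but is precisely where care is required; I expect no conceptual difficulty beyond it. Finally, I would invoke the standard reduction of $\kappa$\+filtered to $\kappa$\+directed colimits, \cite[Theorem~1.5 and Remark~1.21]{AR}, to conclude that $E\in\varinjlim_{(\kappa)}\sS$.
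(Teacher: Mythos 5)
Your treatment of the key assertion --- the ``if'' direction of the factorization criterion --- is essentially the paper's argument: compare the canonical diagram of morphisms from $\sS$ into $E$ with the canonical diagram of morphisms from all $\kappa$\+presentable objects, and show the inclusion of index categories is cofinal (the paper uses the condition~(a)/(b) formulation of cofinality from~\cite[Section~0.11]{AR} rather than connected comma categories, and observes that condition~(b) is automatic because the ambient diagram is $\kappa$\+filtered and the inclusion is fully faithful; this saves most of the connectedness bookkeeping you anticipate).

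Two of your auxiliary arguments, however, do not work as written. First, the interchange argument for closure under $\kappa$\+directed colimits presupposes a \emph{functor} from the outer index category to $\Cat$ so that the Grothendieck construction is defined; but the objects $E_i\in\varinjlim_{(\kappa)}\sS$ come with unrelated presentations as colimits, and the transition morphisms $E_i\rarrow E_j$ do not lift to morphisms of these presentations. The paper instead derives closure (and everything else) from the factorization criterion, which makes it a one\-/line argument: a morphism $T\rarrow\varinjlim_i E_i$ from a $\kappa$\+presentable $T$ factors through some $E_i$, hence through an object of~$\sS$. Second, for $\kappa>\aleph_0$ an $\aleph_0$\+indexed chain is \emph{not} a $\kappa$\+directed diagram, so exhibiting a retract of $S$ as the colimit of $S\to S\to\cdots$ does not by itself place it in $\varinjlim_{(\kappa)}\sS$; again the criterion gives this immediately, since any morphism $T\rarrow R$ into a retract $R$ of $S$ factors as $T\rarrow R\rarrow S\rarrow R$. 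Both gaps are repaired by routing these assertions through the criterion you do prove, which is exactly what the paper means by ``all the other assertions follow easily from this one.''
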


\begin{proof}
 The key assertion is that if an object $E\in\sK$ has the property that
every morphism $T\rarrow K$ into $K$ from an object $T\in\sK_{<\kappa}$
factorizes through some object from $\sS$, then
$E\in\varinjlim_{(\kappa)}\sS$.
 (All the other assertions follow easily from this one.)

 Indeed, let $\sT$ denote a representative set of $\kappa$\+presentable
objects in~$\sK$.
 Consider the canonical diagram $C\:\Delta_\sS\rarrow\sK$ of
morphisms into $E$ from objects of $\sS$ and the canonical diagram
$D\:\Delta_\sT\rarrow\sK$ of morphisms into $E$ from objects of~$\sT$.
 Then we have $E=\varinjlim_{w\in\Delta_\sT}D_w$ by
Lemma~\ref{accessible-canonical-colimit}, and we need to show that
$E=\varinjlim_{v\in\Delta_\sS}C_v$.
 So it remains to check that the natural functor between the index
categories $\delta\:\Delta_\sS\rarrow\Delta_\sT$ is cofinal in
the sense of~\cite[Section~0.11]{AR}.

 Let $w\:D_w\rarrow E$ be an object of~$\Delta_\sT$.
 Then $D_w\in\sT$, and by assumption the morphism~$v$ factorizes
as $D_w\overset a\rarrow S\overset v\rarrow E$ with $S\in\sS$.
 So $v\:C_v=S\rarrow E$ is an object of $\Delta_\sS$, and we have
a morphism $a\:w\rarrow\delta(v)$ in~$\Delta_\sT$.
 This proves condition~(a) from~\cite[Section~0.11]{AR}.
 Since the category $\Delta_\sT$ is $\kappa$\+filtered and
the functor~$\delta$ is fully faithful, condition~(b) follows
automatically.
\end{proof}

 Any cardinal~$\lambda$ can be considered as a totally ordered set,
which is a particular case of a poset; and any poset $I$ can be
viewed as a category (with the elements of $I$ being the objects,
and a unique morphism $i\rarrow j$ for every pair of objects
$i\le j\in I$).
 A \emph{$\lambda$\+indexed chain} (of objects and morphisms) in
a category $\sK$ is a functor $\lambda\rarrow\sK$, where $\lambda$~is
viewed as a category as explained above.

\Section{Product} \label{product-secn}

 The result of this short section is easy and straightforward; it is
only included here for the sake of completeness of the exposition.
 It is essentially a trivial particular case of~\cite[Theorem~1.3]{Hen},
and also the correct particular case of an erroneous (generally speaking)
argument in~\cite[proof of Proposition~2.67]{AR}.

\begin{prop} \label{product-proposition}
 Let $\kappa$~be a regular cardinal and $(\sK_i)_{i\in I}$ be a family
of $\kappa$\+accessible categories.
 Assume that the cardinality of the indexing set $I$ is smaller
than~$\kappa$.
 Then the Cartesian product\/ $\sK=\prod_{i\in I}\sK_i$ is also
a $\kappa$\+accessible category.
 An object $S\in\sK$, \,$S=(S_i\in\sK_i)_{i\in I}$ is
$\kappa$\+presentable in\/ $\sK$ if and only if all its components
$S_i$ are $\kappa$\+presentable in\/~$\sK_i$.
\end{prop}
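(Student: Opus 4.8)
The plan is to exploit the fact that all limits and colimits in the product category $\sK=\prod_{i\in I}\sK_i$ are computed componentwise, together with the componentwise description of morphisms $\Hom_\sK((S_i)_i,(T_i)_i)=\prod_{i\in I}\Hom_{\sK_i}(S_i,T_i)$. In particular, $\kappa$\+directed colimits exist in $\sK$ because they exist in each $\sK_i$. The smallness hypothesis $|I|<\kappa$ will enter through exactly two standard facts, which I expect to carry the real content of the argument (everything else being bookkeeping): \emph{(i)} $\kappa$\+filtered colimits commute with $\kappa$\+small products in $\Sets$, and \emph{(ii)} a product of fewer than $\kappa$ many $\kappa$\+filtered categories is again $\kappa$\+filtered.

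First I would prove the characterization of $\kappa$\+presentable objects. For the ``if'' direction, suppose each $S_i$ is $\kappa$\+presentable and let $(T^{(j)})_j$ be a $\kappa$\+directed diagram in $\sK$. Using componentwise colimits and fact~(i), the natural map
\[
\varinjlim_j\Hom_\sK(S,T^{(j)})=\varinjlim_j\prod_{i\in I}\Hom_{\sK_i}(S_i,T_i^{(j)})\rarrow\prod_{i\in I}\Hom_{\sK_i}\bigl(S_i,\varinjlim_j T_i^{(j)}\bigr)=\Hom_\sK\bigl(S,\varinjlim_j T^{(j)}\bigr)
\]
becomes, after the domain is rewritten as $\prod_{i\in I}\varinjlim_j\Hom_{\sK_i}(S_i,T_i^{(j)})$, the product of the corresponding natural maps for the $S_i$, each a bijection by presentability of $S_i$; a product of bijections is a bijection, so $S$ is $\kappa$\+presentable. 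For the ``only if'' direction, fix $i_0$ and a $\kappa$\+directed diagram $(U^{(j)})_j$ in $\sK_{i_0}$, and form the diagram $T^{(j)}$ in $\sK$ with $i_0$\+component $U^{(j)}$ and all other components the constant object $S_i$ with identity transition maps. Since a $\kappa$\+directed diagram is connected, the constant factor $\prod_{i\ne i_0}\Hom_{\sK_i}(S_i,S_i)$ passes through the colimit unchanged, and it is nonempty as it contains the tuple of identities; the natural map for $S$, a bijection by hypothesis, then factors as the $i_0$\+component natural map times the identity of this nonempty factor, which forces the $i_0$\+component map to be a bijection. Hence $S_{i_0}$ is $\kappa$\+presentable.

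It remains to exhibit a generating set of $\kappa$\+presentable objects. Choose for each $i$ a set $\sS_i$ of $\kappa$\+presentable objects of $\sK_i$ through whose $\kappa$\+directed colimits every object of $\sK_i$ is obtained, and set $\sS=\prod_{i\in I}\sS_i\subset\sK$, which is a set; by the ``if'' direction just proved, every object of $\sS$ is $\kappa$\+presentable in $\sK$. Given any $K=(K_i)_i\in\sK$, write each component as $K_i=\varinjlim_{v\in\Delta_i}D_{i,v}$ over a $\kappa$\+filtered index category $\Delta_i$ with $D_{i,v}\in\sS_i$, for instance the canonical diagram of Lemma~\ref{accessible-canonical-colimit}. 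I would then take $\Delta=\prod_{i\in I}\Delta_i$, which is $\kappa$\+filtered by fact~(ii), together with the functor $D\:\Delta\rarrow\sK$ sending $(v_i)_i$ to $(D_{i,v_i})_i\in\sS$. Each projection $\Delta\rarrow\Delta_{i_0}$ is a final functor (its comma categories are connected and nonempty, using that the $\Delta_i$ are filtered), so computing $\varinjlim_\Delta D$ componentwise recovers $\varinjlim_{v\in\Delta_{i_0}}D_{i_0,v}=K_{i_0}$ in each coordinate, whence $K=\varinjlim_\Delta D$ is a $\kappa$\+directed colimit of objects of $\sS$. This establishes the $\kappa$\+accessibility of $\sK$.
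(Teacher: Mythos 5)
Your proposal is correct and follows the same overall strategy as the paper: the ``if'' direction of the presentability criterion rests on the commutation of $\kappa$\+directed colimits with $\kappa$\+small products in $\Sets$, and accessibility is obtained by exhibiting every object of $\sK$ as a colimit over the product of the ($\kappa$\+filtered) component index categories of objects with $\kappa$\+presentable components. The one place where you diverge is the ``only if'' direction of the presentability characterization: the paper deduces it a posteriori from accessibility, observing that a $\kappa$\+presentable object of $\sK$ must be a retract of an object of the generating set and that retracts of objects with $\kappa$\+presentable components again have $\kappa$\+presentable components; you instead argue directly, padding a $\kappa$\+directed diagram in one factor with constant diagrams in the others and using connectedness of the index category together with nonemptiness of the constant $\Hom$\+factor to isolate the relevant component of the comparison map. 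Both arguments work; yours has the mild advantage of not depending on having first established accessibility of $\sK$, while the paper's is shorter once the generating set is in hand. Your explicit check that the projections $\prod_{i}\Delta_i\rarrow\Delta_{i_0}$ are cofinal supplies a detail the paper leaves implicit in the assertion $K=\varinjlim_{\xi\in\Xi}S_\xi$.
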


\begin{proof}
 The condition that the cardinality of $I$ is smaller than~$\kappa$
(which is missing in~\cite[proof of Proposition~2.67]{AR}) is needed
in order to claim that an object $S\in\sK$ is $\kappa$\+presentable
whenever its components $S_i\in\sK_i$ are $\kappa$\+presentable for
all~$i$.
 Essentially, this holds because $\kappa$\+directed colimits commute
with $\kappa$\+small products in the category of sets
(cf.~\cite[Proposition~2.1]{Hen}).
 Once this is established, it remains to observe that every object
of $\sK$ is a $\kappa$\+directed colimit of such objects $S$, just
as~\cite[proof of Proposition~2.67]{AR} tells.
 Indeed, let $K=(K_i)_{i\in I}\in\sK$ be an object and
$(\Xi_i)_{i\in I}$ be nonempty $\kappa$\+filtered categories such that
$K_i=\varinjlim_{\xi_i\in\Xi_i}S_{i,\xi_i}$ in $\sK_i$ for all $i\in I$
with $S_{i,\xi_i}\in(\sK_i)_{<\kappa}$.
 Then $\Xi=\prod_{i\in I}\Xi_i$ is a $\kappa$\+filtered category and
$K=\varinjlim_{\xi\in\Xi}S_\xi$, where $S_\xi=(S_{i,\xi_i})_{i\in I}$
whenever $\xi=(\xi_i)_{i\in I}\in\prod_{i\in I}\Xi_i$.
 One also needs to use the fact that any retract of an object
$S\in\sK$ with $\kappa$\+presentable components $S_i$ is again
an object with $\kappa$\+presentable components.
\end{proof}

\Section{Equifier} \label{equifier-secn}

 Let $\kappa$~be a regular cardinal and $\lambda$ be a smaller
infinite cardinal, i.~e., $\lambda<\kappa$.
 Let $\sK$ and $\sL$ be $\kappa$\+accessible categories in which
all $\lambda$\+indexed chains (of objects and morphisms) have colimits.
 Let $F$, $G\:\sK\rightrightarrows\sL$ be two parallel functors
preserving $\kappa$\+directed colimits and colimits of
$\lambda$\+indexed chains.
 Assume further that the functor $F$ takes $\kappa$\+presentable
objects to $\kappa$\+presentable objects.
 Let $\phi$, $\psi\:F\rightrightarrows G$ be two parallel natural
transformations of functors.

 Let $\sE\subset\sK$ be the full subcategory consisting of all
objects $E\in\sK$ such that $\phi_E=\psi_E$.
 This construction of the category $\sE$ is known as
the \emph{equifier}~\cite[Section~4]{Kel}, \cite[Section~1]{BKPS},
\cite[Lemma~2.76]{AR}.

 The aim of this section is to prove the following theorem going
back to the unpublished preprint~\cite[Theorem~3.8, Corollary~3.9,
and Remark~3.11(II)]{Ulm}.

\begin{thm} \label{equifier-theorem}
 In the assumptions above, the equifier category\/ $\sE$ is
$\kappa$\+accessible.
 The $\kappa$\+presentable objects of\/ $\sE$ are precisely all
the objects of\/ $\sE$ that are $\kappa$\+presentable as objects
of\/~$\sK$.
\end{thm}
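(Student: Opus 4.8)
The plan is to deduce the theorem from Proposition~\ref{accessible-subcategory}. Fix a set $\sS$ of representatives of the isomorphism classes of $\kappa$\+presentable objects of $\sK$, and let $\sS_\sE=\sS\cap\sE$ be those representatives that happen to lie in the equifier $\sE$. This is a set of $\kappa$\+presentable objects of $\sK$, so $\varinjlim_{(\kappa)}\sS_\sE$ is $\kappa$\+accessible by Proposition~\ref{accessible-subcategory}, with its $\kappa$\+presentable objects being the retracts in $\sK$ of objects of $\sS_\sE$. I would therefore try to prove the single identity $\sE=\varinjlim_{(\kappa)}\sS_\sE$ of full subcategories of $\sK$; both assertions of the theorem then follow. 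For the $\kappa$\+presentability statement one also checks the routine facts that $\sE$ is closed under retracts in $\sK$ (an immediate diagram chase from naturality of $\phi$ and $\psi$) and that an object of $\sE$ is $\kappa$\+presentable in $\sK$ if and only if it is a retract of some object of $\sS_\sE$.

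The inclusion $\varinjlim_{(\kappa)}\sS_\sE\subseteq\sE$ is easy: if $E=\varinjlim_{\xi}E_\xi$ is a $\kappa$\+directed colimit of objects $E_\xi\in\sE$, then, since $F$ and $G$ preserve $\kappa$\+directed colimits, $\phi_E$ and $\psi_E$ are the colimits of the morphisms $\phi_{E_\xi}=\psi_{E_\xi}$, hence $\phi_E=\psi_E$ and $E\in\sE$. For the reverse inclusion I would invoke the factorization criterion of Proposition~\ref{accessible-subcategory}: it suffices to show that for every $E\in\sE$ and every morphism $t\:T\rarrow E$ with $T\in\sK_{<\kappa}$, the morphism $t$ factors through some object of $\sE$ that is $\kappa$\+presentable in $\sK$ (such an object being isomorphic to one in $\sS_\sE$). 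This factorization lemma is the heart of the matter.

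To prove it, I would build, by transfinite recursion, a $\lambda$\+indexed chain $(S_\alpha)_{\alpha<\lambda}$ of $\kappa$\+presentable objects together with a compatible cocone $t_\alpha\:S_\alpha\rarrow E$, starting from $S_0=T$ and $t_0=t$. Writing $u_\alpha\:S_\alpha\rarrow S_{\alpha+1}$ for the chain maps, the recursion is arranged so that at every successor step the discrepancy between $\phi$ and $\psi$ on the ``old part'' is killed, namely $G(u_\alpha)\phi_{S_\alpha}=G(u_\alpha)\psi_{S_\alpha}$. The successor step is where the hypotheses do their work. Because $E\in\sE$, naturality gives $G(t_\alpha)\phi_{S_\alpha}=\phi_E F(t_\alpha)=\psi_E F(t_\alpha)=G(t_\alpha)\psi_{S_\alpha}$. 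Now represent $E$ as the $\kappa$\+filtered colimit $\varinjlim_{v\in\Delta}D_v$ of its canonical diagram of $\kappa$\+presentable objects (Lemma~\ref{accessible-canonical-colimit}); since $S_\alpha$ is $\kappa$\+presentable, $t_\alpha$ factors as $S_\alpha\overset{s}\rarrow D_{v_0}\rarrow E$. The object $F(S_\alpha)$ is $\kappa$\+presentable in $\sL$ (here we use that $F$ preserves $\kappa$\+presentability), while $G(E)=\varinjlim_{v}G(D_v)$ is a $\kappa$\+filtered colimit; hence the two morphisms $G(s)\phi_{S_\alpha}$ and $G(s)\psi_{S_\alpha}\:F(S_\alpha)\rightrightarrows G(D_{v_0})$, which become equal in $G(E)$, already become equal after passing to some $v_1\geq v_0$ in $\Delta$. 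Setting $S_{\alpha+1}=D_{v_1}$, with $u_\alpha$ the evident composite and $t_{\alpha+1}$ the canonical coprojection, yields the required relation. At a limit ordinal $\gamma<\lambda$ I would \emph{not} form a colimit (which need not exist in $\sK$); instead, since $\gamma<\kappa$ and the canonical diagram is $\kappa$\+filtered, the $\kappa$\+small cocone $(t_\beta\:S_\beta\rarrow E)_{\beta<\gamma}$ of $\kappa$\+presentable objects lifts through a single $D_v$, and I take $S_\gamma=D_v$. In this way all the $S_\alpha$ stay $\kappa$\+presentable.

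Finally, set $S=\varinjlim_{\alpha<\lambda}S_\alpha$, which exists by the hypothesis on $\lambda$\+indexed chains and is $\kappa$\+presentable because it is a $\kappa$\+small colimit of $\kappa$\+presentable objects ($\lambda<\kappa$). The cocone $(t_\alpha)$ induces a morphism $S\rarrow E$ through which $t=t_0$ factors, so it remains to see that $S\in\sE$. Since $F$ and $G$ preserve the colimit of the $\lambda$\+indexed chain, with coprojections $g_\alpha\:S_\alpha\rarrow S$, for each $\alpha<\lambda$ naturality and the successor relation give $\phi_S\,F(g_\alpha)=G(g_{\alpha+1})G(u_\alpha)\phi_{S_\alpha}=G(g_{\alpha+1})G(u_\alpha)\psi_{S_\alpha}=\psi_S\,F(g_\alpha)$, where I used $g_\alpha=g_{\alpha+1}u_\alpha$. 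As $\lambda$ is a limit ordinal every index has a successor below $\lambda$, so this holds for \emph{all} $\alpha$; since the $F(g_\alpha)$ form a colimit cocone for $F(S)$, we conclude $\phi_S=\psi_S$, i.e.\ $S\in\sE$. The main obstacle is precisely the successor step of this construction: upgrading the ``pointwise agreement after composing into $E$'' to an \emph{actual} agreement at a bounded stage, which is what forces the interplay of $\kappa$\+presentability of $F(T)$, the $\kappa$\+filteredness of the canonical diagram, and the colimit of a $\lambda$\+indexed chain (a genuinely non\+$\kappa$\+directed colimit) at the very end.
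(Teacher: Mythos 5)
Your proposal is correct and follows essentially the same route as the paper: the theorem is reduced to Proposition~\ref{accessible-subcategory} via a factorization lemma (the paper's Proposition~\ref{equifier-proposition}), which is proved by building a $\lambda$\+indexed chain of $\kappa$\+presentable objects whose successor steps equalize $\phi$ and $\psi$ after a bounded stage and whose colimit is the desired $\kappa$\+presentable object of $\sE$. The only cosmetic difference is that you run the chain through the canonical diagram of $E$ starting at $T$ itself, while the paper first factors $T\rarrow E$ through a term of an arbitrary $\kappa$\+filtered presentation and builds the chain inside its index category; the substance is identical.
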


 We start with the obvious observations that $\kappa$\+directed colimits
(as well as colimits of $\lambda$\+indexed chains) exist in $\sE$ and
are preserved by the inclusion functor $\sE\rarrow\sK$ (because such
colimits exist in $\sK$ and are preserved by the functor~$F$).
 It follows immediately that any object of $\sE$ that is
$\kappa$\+presentable in $\sK$ is also $\kappa$\+presentable in~$\sE$.
 The proof of the theorem is based the following proposition.

\begin{prop} \label{equifier-proposition}
 Let $E\in\sE$ be an object and $S\in\sK_{<\kappa}$ be
a $\kappa$\+presentable object.
 Then any morphism $S\rarrow E$ in\/ $\sK$ factorizes through
an object $U\in\sE\cap\sK_{<\kappa}$.
\end{prop}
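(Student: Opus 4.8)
The plan is to construct, by transfinite recursion of length $\lambda$, a $\lambda$\+indexed chain of $\kappa$\+presentable objects of $\sK$ carrying a compatible cocone to $E$, and to let $U$ be its colimit. Since $\lambda<\kappa$ and $\sK$ has colimits of $\lambda$\+indexed chains, such a $U$ is automatically $\kappa$\+presentable, being a colimit of a $\kappa$\+small chain of $\kappa$\+presentable objects. The morphism $S\rarrow E$ will factor through $U$ by construction, so the only genuine content is to arrange that $\phi_U=\psi_U$, i.e.\ that $U\in\sE$.

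First I would fix a $\kappa$\+directed presentation $E=\varinjlim_{i\in I}E_i$ with $E_i\in\sK_{<\kappa}$ and colimit injections $e_i\:E_i\rarrow E$. As $S$ is $\kappa$\+presentable, the given morphism $S\rarrow E$ factors through some $e_{i_0}$, and I take $S_0=E_{i_0}$ with $h_0=e_{i_0}\:S_0\rarrow E$ as the bottom of the chain. The recursion step rests on a single naturality identity: for any $\kappa$\+presentable $S_n$ equipped with a morphism $h_n\:S_n\rarrow E$, the two composites $G(h_n)\phi_{S_n}$ and $G(h_n)\psi_{S_n}$ from $F(S_n)$ to $G(E)$ coincide, since each equals $\phi_E\circ F(h_n)=\psi_E\circ F(h_n)$. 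Now $F(S_n)$ is $\kappa$\+presentable (here I use that $F$ preserves $\kappa$\+presentable objects), while $G(E)=\varinjlim_i G(E_i)$ is $\kappa$\+directed (here I use that $G$ preserves $\kappa$\+directed colimits). Therefore the equality $G(e_{i_n})\phi_{S_n}=G(e_{i_n})\psi_{S_n}$ holding in the colimit $G(E)$ is already witnessed at a finite stage: there is a transition $t_n\:S_n=E_{i_n}\rarrow E_{i_{n+1}}=S_{n+1}$ with $G(t_n)\phi_{S_n}=G(t_n)\psi_{S_n}$, which by naturality rewrites as $\phi_{S_{n+1}}\circ F(t_n)=\psi_{S_{n+1}}\circ F(t_n)$. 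This is the step that ``repairs the equation along $t_n$''.

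Passing to $U=\varinjlim_{n<\lambda}S_n$ with injections $u_n\:S_n\rarrow U$ and induced map $\bar h\:U\rarrow E$, I use that $F$ preserves this $\lambda$\+indexed colimit, so that $\phi_U=\psi_U$ is equivalent to the family of equalities $G(u_n)\phi_{S_n}=G(u_n)\psi_{S_n}$ for all $n<\lambda$. Each of these follows at once from the repaired identity one level up: writing $u_n=u_{n+1}\circ t_n$ and using naturality, $G(u_n)\phi_{S_n}=G(u_{n+1})\phi_{S_{n+1}}F(t_n)=G(u_{n+1})\psi_{S_{n+1}}F(t_n)=G(u_n)\psi_{S_n}$. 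Hence $U\in\sE$, and since $\bar h\,u_0=h_0$, the composite $S\rarrow S_0\rarrow U\rarrow E$ is the desired factorization through $U\in\sE\cap\sK_{<\kappa}$.

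The step I expect to be the main obstacle is the treatment of limit ordinals $\ell<\lambda$ in the recursion. It is tempting to set $S_\ell=\varinjlim_{n<\ell}S_n$, but colimits of chains shorter than $\lambda$ are not guaranteed by our hypotheses, and in any case I need every $S_\ell$ to remain $\kappa$\+presentable so that the recursion step can be reapplied. The remedy is to stay inside the index category $I$: since $I$ is $\kappa$\+filtered and $\ell<\lambda<\kappa$, the $\kappa$\+small diagram $(i_n)_{n<\ell}$ admits a cocone in $I$, and I choose its apex $i_\ell$, setting $S_\ell=E_{i_\ell}$ and $h_\ell=e_{i_\ell}$. Then every $S_n$ is one of the $E_i$, hence $\kappa$\+presentable, while $(S_n)_{n<\lambda}$ is a genuine $\lambda$\+indexed chain whose colimit exists by hypothesis; the verification of $\phi_U=\psi_U$ above then applies uniformly to successor and limit indices alike.
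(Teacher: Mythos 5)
Your argument is correct and follows essentially the same route as the paper's proof: factor $S\rarrow E$ through a stage of a $\kappa$\+filtered presentation of $E$ by $\kappa$\+presentables, repair the equation $\phi=\psi$ at each successor step using the $\kappa$\+presentability of $F$ applied to the stages together with $G$ preserving the colimit, handle limit ordinals by choosing upper bounds inside the $\kappa$\+filtered index category, and take $U$ to be the colimit of the resulting $\lambda$\+indexed chain. The only cosmetic slip is the phrase ``witnessed at a finite stage,'' which should read ``at some later stage of the colimit.''
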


\begin{proof}
 Let $E=\varinjlim_{\xi\in\Xi}T_\xi$ be a representation of the object
$E$ as a $\kappa$\+filtered colimit of $\kappa$\+presentable objects in
the category~$\sK$.
 Then we have $G(E)=\varinjlim_{\xi\in\Xi}G(T_\xi)$ in $\sL$
and $F(S)$, $F(T_\xi)\in\sL_{<\kappa}$.
 There exists an index $\xi_0\in\Xi$ such that the morphism $S\rarrow E$
factorizes through the morphism $T_{\xi_0}\rarrow E$ in~$\sK$.

 Since $E\in\sE$, we have $\phi_E=\psi_E\:F(E)\rarrow G(E)$.
 Hence the two compositions
$$
 \xymatrix{
  F(T_{\xi_0}) \ar@<2pt>[r]^\phi \ar@<-2pt>[r]_\psi
  & G(T_{\xi_0}) \ar[r] & G(E)
 }
$$
are equal to each other in~$\sL$.
 Since $G(E)=\varinjlim_{\xi\in\Xi}G(T_\xi)$ and $F(T_{\xi_0})\in
\sL_{<\kappa}$, it follows that there exists an index $\xi_1\in\Xi$
together with an arrow $\xi_0\rarrow\xi_1$ in $\Xi$ such that
the two compositions
$$
 \xymatrix{
  F(T_{\xi_0}) \ar@<2pt>[r]^\phi \ar@<-2pt>[r]_\psi
  & G(T_{\xi_0}) \ar[r] & G(T_{\xi_1}) 
 }
$$
are equal to each other in~$\sL$.

 Similarly, there exists an index $\xi_2\in\Xi$ together with an arrow
$\xi_1\rarrow\xi_2$ in $\Xi$ such that the two compositions
$$
 \xymatrix{
  F(T_{\xi_1}) \ar@<2pt>[r]^\phi \ar@<-2pt>[r]_\psi
  & G(T_{\xi_1}) \ar[r] & G(T_{\xi_2}) 
 }
$$
are equal to each other, etc.

 Proceeding in this way, we construct a $\lambda$\+indexed chain
of indices $\xi_i\in\Xi$ and arrows $\xi_i\rarrow\xi_j$ in $\Xi$ for
all $0\le i<j<\lambda$ such that, for all ordinals $0\le i<\lambda$,
the two compositions
$$
 \xymatrix{
  F(T_{\xi_i}) \ar@<2pt>[r]^\phi \ar@<-2pt>[r]_\psi
  & G(T_{\xi_i}) \ar[r] & G(T_{\xi_{i+1}}) 
 }
$$
are equal to each other in~$\sL$.
 Specifically, for a limit ordinal $k<\lambda$, we just pick
an index $\xi_k\in\Xi$ and arrows $\xi_i\rarrow\xi_k$ in $\Xi$ for
all $i<k$ making the triangles $\xi_i\rarrow\xi_j\rarrow\xi_k$
commutative in $\Xi$ for all $i<j<k$.
 This can be done, because $k<\kappa$ and the index category $\Xi$
is $\kappa$\+filtered.
 For a successor ordinal $k=i+1<\lambda$, the same argument as above
in this proof provides the desired arrow $\xi_i\rarrow\xi_{i+1}$.

 After the construction is finished, it remains to put
$U=\varinjlim_{i<\lambda}T_{\xi_i}$.
 We have $U\in\sK_{<\kappa}$, since $\lambda<\kappa$ and the class of
all $\kappa$\+presentable objects in a category with $\kappa$\+directed
colimits is closed under those $\kappa$\+small colimits that exist
in the category~\cite[Proposition~1.16]{AR}.
 We also have $\phi_U=\psi_U$ by construction, since
$F(U)=\varinjlim_{i<\lambda}F(T_{\xi_i})$; so $U\in\sE$.
\end{proof}

\begin{proof}[Proof of Theorem~\ref{equifier-theorem}]
 Combine Propositions~\ref{accessible-subcategory}
and~\ref{equifier-proposition}.
\end{proof}

\begin{rem} \label{joint-equifier}
 In applications of Theorem~\ref{equifier-theorem}, one may be
interested in the \emph{joint equifier} of a family of pairs of natural
transformations (cf.~\cite[Remark~2.76]{AR}).
 Let $\sK$ be a $\kappa$\+accessible category and $(\sL_i)_{i\in I}$
be a family of $\kappa$\+accessible categories.
 Let $F_i$, $G_i\:\sK\rightrightarrows\sL_i$ be a family of pairs
of parallel functors, all of them preserving $\kappa$\+directed
colimits and colimits of $\lambda$\+indexed chains.
 Assume further that the functors $F_i$ take $\kappa$\+presentable
objects to $\kappa$\+presentable objects, and that the cardinality
of the indexing set $I$ is smaller than~$\kappa$.
 Let $\phi_i$, $\psi_i\:F_i\rightrightarrows G_i$ be a family of
pairs of parallel natural transformations.

 Consider the full subcategory $\sE\subset\sK$ consisting of all
objects $E\in\sK$ such that $\phi_{i,E}=\psi_{i,E}$ for all
$i\in I$.
 Then the category $\sE$ is $\kappa$\+accessible, and
the $\kappa$\+presentable objects of $\sE$ are precisely all
the objects of $\sE$ that are $\kappa$\+presentable as objects of~$\sK$.
 This assertion can be deduced from
Proposition~\ref{product-proposition} and Theorem~\ref{equifier-theorem}
by passing to the Cartesian product category $\sL=\prod_{i\in I}\sL_i$.
 The family of functors $F_i\:\sK\rarrow\sL_i$ defines a functor
$F\:\sK\rarrow\sL$, the family of functors $G_i\:\sK\rarrow\sL_i$
defines a functor $G\:\sK\rarrow\sL$, and the family of pairs of
natural transformations $\phi_i$, $\psi_i\:F_i\rightrightarrows G_i$
defines a pair of natural transformations $\phi$,
$\psi\:F\rightrightarrows G$.
 It follows from Proposition~\ref{product-proposition} that all
the assumptions of Theorem~\ref{equifier-theorem} are satisfied by
the category $\sL$ and the pair of functors $F$,~$G$.
\end{rem}

\Section{Inserter} \label{inserter-secn}

 As in Section~\ref{equifier-secn}, we consider a regular
cardinal~$\kappa$ and a smaller infinite cardinal $\lambda<\kappa$.
 Let $\sK$ and $\sL$ be $\kappa$\+accessible categories in which
all $\lambda$\+indexed chains have colimits.
 Let $F$, $G\:\sK\rightrightarrows\sL$ be two parallel functors
preserving $\kappa$\+directed colimits and colimits of
$\lambda$\+indexed chains; assume further that the functor $F$ takes
$\kappa$\+presentable objects to $\kappa$\+presentable objects.

 Let $\sE$ be the category of pairs $(K,\phi)$, where
$K\in\sK$ is an object and $\phi\:F(K)\rarrow G(K)$ is
a morphism in~$\sL$.
 This construction of the category $\sE$ is known as
the \emph{inserter}~\cite[Section~4]{Kel}, \cite[Section~1]{BKPS},
\cite[Section~5.1.1]{MP}, \cite[Section~2.71]{AR}.

 The aim of this section is to prove the following theorem, which
also goes back to the unpublished preprint~\cite[Theorem~3.8,
Corollary~3.9, and Remark~3.11(II)]{Ulm}.

\begin{thm} \label{inserter-theorem}
 In the assumptions above, the inserter category\/ $\sE$ is
$\kappa$\+accessible.
 The $\kappa$\+presentable objects of\/ $\sE$ are precisely
all the pairs $(S,\psi)$ where $S$ is a $\kappa$\+presentable
object of\/~$\sK$.
\end{thm}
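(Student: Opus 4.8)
The plan is to follow the same two-step pattern as in the proof of Theorem~\ref{equifier-theorem}, the difference being that the structure morphism $\phi\:F(K)\rarrow G(K)$ is now part of the data rather than a mere condition, which is exactly what makes the inserter harder. First I would record the routine facts. A morphism $(K,\phi)\rarrow(K',\phi')$ in $\sE$ is a morphism $f\:K\rarrow K'$ in $\sK$ with $G(f)\circ\phi=\phi'\circ F(f)$, so the forgetful functor $\sE\rarrow\sK$ is faithful. Since $F$ and $G$ preserve $\kappa$\+directed colimits and colimits of $\lambda$\+indexed chains, such colimits exist in $\sE$ and are created by $\sE\rarrow\sK$: given a $\kappa$\+directed system $(K_\xi,\phi_\xi)$ with $K=\varinjlim_\xi K_\xi$, the $\phi_\xi$ assemble, via $F(K)=\varinjlim_\xi F(K_\xi)$ and $G(K)=\varinjlim_\xi G(K_\xi)$, into a unique $\phi$, and $(K,\phi)=\varinjlim_\xi(K_\xi,\phi_\xi)$. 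The easy half of the presentability claim follows: if $S\in\sK_{<\kappa}$, then $(S,\psi)$ is $\kappa$\+presentable in $\sE$. Indeed, a morphism $(S,\psi)\rarrow(K,\phi)$ into a $\kappa$\+directed colimit has underlying $f\:S\rarrow K$ factoring through some $K_\xi$; the maps $G(f_\xi)\circ\psi$ and $\phi_\xi\circ F(f_\xi)\:F(S)\rightrightarrows G(K_\xi)$ agree after composition to $G(K)$, hence, as $F(S)\in\sL_{<\kappa}$ and $G(K)=\varinjlim_\xi G(K_\xi)$ is $\kappa$\+directed, already agree over some $K_{\xi'}$, so $f_{\xi'}$ is a morphism in $\sE$; faithfulness of $\sE\rarrow\sK$ gives uniqueness of such factorizations.

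The reverse implication and $\kappa$\+accessibility will both follow once I prove the inserter analogue of Proposition~\ref{equifier-proposition}:

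\medskip
\emph{For every $(E,\chi)\in\sE$ and every $\kappa$\+presentable $T\in\sK_{<\kappa}$ with a morphism $t\:T\rarrow E$ in $\sK$, there is a pair $(U,\psi)\in\sE$ with $U\in\sK_{<\kappa}$ and a morphism $(U,\psi)\rarrow(E,\chi)$ in $\sE$, with underlying map $u\:U\rarrow E$, such that $t$ factors through~$u$.}
\medskip

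Granting this, the set $\sS$ of isomorphism classes of pairs $(U,\psi)$ with $U\in\sK_{<\kappa}$ consists of $\kappa$\+presentable objects of $\sE$ by the previous paragraph, and the factorization says precisely that the objects $(U,\psi)$ over $(E,\chi)$ are cofinal, in the underlying category $\sK$, among the $\kappa$\+presentable objects mapping to $E$. Comparing the canonical diagram of $(E,\chi)$ over $\sS$ with the canonical diagram of $E$ over $\sK_{<\kappa}$ (Lemma~\ref{accessible-canonical-colimit}), a cofinality argument as in Proposition~\ref{accessible-subcategory} shows that the index category is $\kappa$\+filtered and that $(E,\chi)=\varinjlim(U,\psi)$ in $\sE$, yielding $\kappa$\+accessibility. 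The characterization of $\kappa$\+presentables then follows because every $\kappa$\+presentable object of $\sE$ is a retract of an object of $\sS$, and both retracts and the forgetful functor preserve $\kappa$\+presentability of the underlying $\sK$\+object.

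For the factorization statement I would run the $\lambda$\+indexed chain construction. Write $E=\varinjlim_{\xi\in\Xi}T_\xi$ as a $\kappa$\+filtered colimit of $\kappa$\+presentable objects, so $F(E)=\varinjlim_\xi F(T_\xi)$ and $G(E)=\varinjlim_\xi G(T_\xi)$ with all $F(T_\xi)\in\sL_{<\kappa}$, and choose $\xi_0$ with $t$ factoring through $T_{\xi_0}\rarrow E$. I build a chain $\xi_0\rarrow\xi_1\rarrow\cdots$ in $\Xi$ together with morphisms $\psi_i\:F(T_{\xi_i})\rarrow G(T_{\xi_{i+1}})$, each a lift through $G(T_{\xi_{i+1}})\rarrow G(E)$ of the composite $F(T_{\xi_i})\rarrow F(E)\overset{\chi}{\rarrow}G(E)$; such a lift exists because $F(T_{\xi_i})$ is $\kappa$\+presentable and $(G(T_\eta))_{\eta\in\Xi}$ is $\kappa$\+directed. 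Setting $U=\varinjlim_{i<\lambda}T_{\xi_i}$ gives $U\in\sK_{<\kappa}$ (a $\lambda$\+indexed, hence $\kappa$\+small, colimit of $\kappa$\+presentables, as $\lambda<\kappa$), with $F(U)=\varinjlim_i F(T_{\xi_i})$ and $G(U)=\varinjlim_i G(T_{\xi_i})$. The purpose of passing to $U$ is to force the domain and codomain of the structure morphism to share an index, and the $\psi_i$ are meant to glue into the required $\psi\:F(U)\rarrow G(U)$.

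The hard part — absent in the equifier case, where $\phi_U$ and $\psi_U$ were supplied automatically by naturality — is that here $\psi$ must be constructed by hand as a colimit of the $\psi_i$, which needs the compatibility $G(t_{i+1,j+1})\circ\psi_i=\psi_j\circ F(t_{ij})$ of the shifted family, with $t_{ij}\:T_{\xi_i}\rarrow T_{\xi_j}$ the transition maps. These two maps $F(T_{\xi_i})\rightrightarrows G(T_{\xi_{j+1}})$ agree only after composition to $G(E)$, and since $G(U)\rarrow G(E)$ need not be monic this is insufficient; moreover the $\lambda$\+chain colimit $G(U)$ is not $\kappa$\+directed, so $\kappa$\+presentability cannot be invoked directly at the colimit. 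I would therefore interleave into the recursion, at each successor step, an equalization of the consecutive compatibility square: using again that $F(T_{\xi_i})\in\sL_{<\kappa}$ and that two maps into the $\kappa$\+directed system $(G(T_\eta))_\eta$ agreeing in $G(E)$ must already agree over some $G(T_\eta)$, I advance the index so that $G(t_{i+1,i+2})\circ\psi_i=\psi_{i+1}\circ F(t_{i,i+1})$ holds on the nose; a telescoping induction then gives exact compatibility for all successor pairs, while limit stages $k<\lambda$ are handled by choosing $\xi_k$ to be an upper bound of the earlier indices (possible since $k<\kappa$ and $\Xi$ is $\kappa$\+filtered) and re-equalizing against the previously constructed maps. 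Maintaining this bookkeeping coherently across all $i<\lambda$ is precisely the "direct calculation that gets complicated" promised in the introduction; once it is carried out, the glued $\psi$ defines $(U,\psi)\in\sE$, the map $u\:U\rarrow E$ underlies a morphism $(U,\psi)\rarrow(E,\chi)$ since $G(u)\circ\psi=\chi\circ F(u)$ holds after restriction to each $F(T_{\xi_i})$ and $F(U)=\varinjlim_i F(T_{\xi_i})$, and $t$ factors through $u$ by the choice of $\xi_0$, completing the argument.
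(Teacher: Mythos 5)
There is a genuine gap in your reduction of the theorem to the single factorization claim. For the equifier, $\sE$ is a \emph{full} subcategory of $\sK$, so Proposition~\ref{accessible-subcategory} applies directly: the comparison functor between the two canonical diagrams is fully faithful, and both the $\kappa$\+filteredness of the smaller diagram and condition~(b) of cofinality come for free. For the inserter this breaks down, because a morphism $(S,\psi_a)\rarrow(U,\psi)$ in $\sE$ is a morphism $S\rarrow U$ in $\sK$ subject to the extra condition $G(S\to U)\circ\psi_a=\psi\circ F(S\to U)$; hence the functor $\Delta_E\rarrow\Delta_K$ between canonical diagrams is faithful but \emph{not} full. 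Concretely: given fewer than $\kappa$ morphisms $v_a\:(S_a,\psi_a)\rarrow(E,\chi)$, your claim lets you factor a common $\sK$\+bound $T\rarrow E$ through some $(U,\psi)\rarrow(E,\chi)$, but the resulting composites $S_a\rarrow U$ need not be morphisms in $\sE$: the two maps $F(S_a)\rightrightarrows G(U)$ in question agree only after composing with $G(U)\rarrow G(E)$, which need not be monic --- the very phenomenon you yourself flag when gluing the~$\psi_i$. So neither the $\kappa$\+filteredness of the canonical diagram in $\sE$ nor condition~(b) of cofinality follows from your statement; both require separate arguments, including an equalization step for $\kappa$\+small families of parallel morphisms in $\sE$ that your proposal never addresses.

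The paper repairs exactly this point by proving a \emph{relative} form of your factorization statement (Lemma~\ref{inserter-main-lemma}): the input includes, besides the morphism $T\rarrow K$, a ``partial structure map'' $\sigma\:F(S)\rarrow G(T)$ fitting into a commutative pentagon with~$\phi$, and the output object $(U,\psi)$ is required to be compatible with~$\sigma$. Choosing $\sigma$ suitably --- so that it simultaneously extends all the $\psi_a$, or equalizes a parallel pair --- is what makes the filteredness and cofinality arguments (Propositions~\ref{inserter-diagram-kappa-filtered} and~\ref{inserter-diagram-cofinal}) go through; the theorem is then deduced via conservativity of the forgetful functor $\sE\rarrow\sK$. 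Your $\lambda$\+indexed chain construction of $(U,\psi)$ itself, with the interleaved equalization of the compatibility squares at successor steps and re-equalization against all earlier indices at limit steps, is essentially the paper's proof of that lemma in the special case of trivial~$\sigma$ and is sound as far as it goes; what is missing is the stronger relative statement and the two propositions built on it.
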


 We start with the obvious observations that $\kappa$\+directed colimits
(as well as colimits of $\lambda$\+indexed chains) exist in $\sE$ and
are preserved by the forgetful functor $\sE\rarrow\sK$ (because such
colimits exists in $\sK$ and are preserved by the functor~$F$).

 The proof of the theorem is based on three propositions.
 It uses the same idea as the proof of
Theorem~\ref{equifier-theorem} above, but the details are much
more complicated in the case of Theorem~\ref{inserter-theorem}.

\begin{prop} \label{inserter-obvious-inclusion}
 Let $(S,\psi)\in\sE$ be an object such that $S\in\sK_{<\kappa}$.
 Then $(S,\psi)\in\sE_{<\kappa}$.
\end{prop}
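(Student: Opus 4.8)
The plan is to verify directly that the functor $\Hom_\sE((S,\psi),{-})$ preserves $\kappa$\+directed colimits. So let $(K,\phi)=\varinjlim_{\xi\in\Xi}(K_\xi,\phi_\xi)$ be a $\kappa$\+filtered colimit in $\sE$, with structure morphisms $u_\xi\:(K_\xi,\phi_\xi)\rarrow(K,\phi)$ and transition morphisms $u_{\xi\xi'}\:(K_\xi,\phi_\xi)\rarrow(K_{\xi'},\phi_{\xi'})$. By the observation preceding the proposition, the forgetful functor preserves this colimit, so $K=\varinjlim_\xi K_\xi$ in $\sK$; moreover $F(K)=\varinjlim_\xi F(K_\xi)$ and $G(K)=\varinjlim_\xi G(K_\xi)$ in $\sL$, since $F$ and $G$ preserve $\kappa$\+directed colimits, and the structure morphism $\phi$ is characterized by $\phi\circ F(u_\xi)=G(u_\xi)\circ\phi_\xi$ for all $\xi$ (this being exactly the condition that each $u_\xi$ is a morphism in $\sE$). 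The two key inputs are that $S$ is $\kappa$\+presentable in $\sK$ and, crucially, that $F(S)$ is $\kappa$\+presentable in $\sL$ (which is where the hypothesis on $F$ enters).

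First I would prove surjectivity of the comparison map $\varinjlim_\xi\Hom_\sE((S,\psi),(K_\xi,\phi_\xi))\rarrow\Hom_\sE((S,\psi),(K,\phi))$. Given a morphism $f\:(S,\psi)\rarrow(K,\phi)$ in $\sE$, its underlying morphism $f\:S\rarrow K$ factorizes as $f=u_{\xi_0}\circ g_0$ for some $g_0\:S\rarrow K_{\xi_0}$, by $\kappa$\+presentability of $S$ in $\sK$. The morphism $g_0$ need not lie in $\sE$; that is, $\phi_{\xi_0}\circ F(g_0)$ and $G(g_0)\circ\psi$ need not agree. However, the compatibility condition $\phi\circ F(f)=G(f)\circ\psi$ together with the characterization of $\phi$ shows that these two parallel morphisms $F(S)\rightrightarrows G(K_{\xi_0})$ become equal after composition with $G(u_{\xi_0})$. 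Since $F(S)$ is $\kappa$\+presentable in $\sL$ and $G(K)=\varinjlim_\xi G(K_\xi)$, they already agree after composition with $G(u_{\xi_0\xi_1})$ for some $\xi_1\ge\xi_0$. Putting $g_1=u_{\xi_0\xi_1}\circ g_0$ and using once more that $u_{\xi_0\xi_1}$ is a morphism in $\sE$, one checks that $g_1\:(S,\psi)\rarrow(K_{\xi_1},\phi_{\xi_1})$ is now a genuine morphism in $\sE$, and $u_{\xi_1}\circ g_1=f$, so $f$ is lifted.

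Second, injectivity is easier. I would use that a morphism in $\sE$ is nothing but an underlying morphism in $\sK$ satisfying an equation, so each map $\Hom_\sE((S,\psi),(K_\xi,\phi_\xi))\hookrightarrow\Hom_\sK(S,K_\xi)$ is injective and compatible with the colimit comparison maps. If two $\sE$\+morphisms have the same image in $\Hom_\sE((S,\psi),(K,\phi))$, then their underlying morphisms into $K$ coincide; by $\kappa$\+presentability of $S$ in $\sK$ these underlying morphisms are already equalized by some transition morphism $u_{\xi\xi''}$, and since the relevant composites are $\sE$\+morphisms determined by their underlying $\sK$\+morphisms, the two $\sE$\+morphisms are identified in the colimit.

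I expect the main obstacle to be the surjectivity step, and specifically the passage from the mere $\sK$\+factorization $g_0$ to a genuine $\sE$\+morphism $g_1$. This is precisely the point where one must equalize the two parallel morphisms $F(S)\rightrightarrows G(K_{\xi_0})$ by moving to a later index, which is possible only because $F(S)$ is $\kappa$\+presentable in $\sL$. Thus the hypothesis that $F$ sends $\kappa$\+presentable objects to $\kappa$\+presentable objects is used here in an essential way; everything else reduces to $\kappa$\+presentability of $S$ in $\sK$ together with the fact that $F$ and $G$ commute with the colimit.
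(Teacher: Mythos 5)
Your proposal is correct and is essentially the paper's argument unwound: the paper identifies $\Hom_\sE((S,\psi),(K,\phi))$ as the equalizer of the pair of maps $\Hom_\sK(S,K)\rightrightarrows\Hom_\sL(F(S),G(K))$ and invokes the commutation of $\kappa$\+directed colimits with finite limits in $\Sets$, which is exactly what your explicit surjectivity/injectivity verification establishes element by element. The two key hypotheses you isolate (that $F(S)$ is $\kappa$\+presentable in $\sL$ and that $G$ preserves $\kappa$\+directed colimits) are precisely the ones the paper flags as the inputs, and, as in the paper, the cardinal $\lambda$ plays no role here.
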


\begin{proof}
 The assumptions concerning cardinal~$\lambda$ are not needed for
this proposition.
 Essentially, the assertion holds because $\kappa$\+directed colimits
commute with finite limits in the category of sets
(cf.~\cite[Proposition~2.1]{Hen}).
 To be more specific, it helps to observe that, given an object
$(K,\phi)$ in $\sE$, the set of morphisms $\Hom_\sE((S,\psi),(K,\phi))$
is computed as the equalizer of the natural pair of maps
$$
 \xymatrix{
  \Hom_\sK(S,K) \ar@<2pt>[rr]^-{f\mapsto\phi\circ F(f)}
  \ar@<-2pt>[rr]_-{f\mapsto G(f)\circ\psi} && \Hom_\sL(F(S),G(K)).
 }
$$
 Then one needs to use the assumptions that the functor $G$ preserves
$\kappa$\+directed colimits and the functor $F$ takes
$\kappa$\+presentable objects to $\kappa$\+presentable objects.
\end{proof}

 Denote by $\sE'_{<\kappa}\subset\sE$ the full subcategory formed by
all the pairs $(S,\psi)\in\sE$ with $S\in\sK_{<\kappa}$.
 By Proposition~\ref{inserter-obvious-inclusion}, we have
$\sE'_{<\kappa}\subset\sE_{<\kappa}$.

\begin{prop} \label{inserter-diagram-kappa-filtered}
 Let $E=(K,\phi)\in\sE$ be an object.
 Consider the canonical diagram $C=D_E$ of morphisms into $E$ from
(representatives of isomorphism classes of) objects $B=(S,\psi)\in
\sE'_{<\kappa}$, with the indexing category $\Delta=\Delta_E$.
 Then the indexing category $\Delta$ is $\kappa$\+filtered.
\end{prop}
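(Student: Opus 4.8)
The plan is to verify directly that $\Delta=\Delta_E$ satisfies the three generating conditions for $\kappa$\+filteredness in the sense of \cite{AR}: it is nonempty, every family of fewer than $\kappa$ objects admits an upper bound, and every family of fewer than $\kappa$ parallel morphisms admits a weakly coequalizing morphism (these generate cocones over all $\kappa$\+small subdiagrams). An object of $\Delta$ is a morphism $(S,\psi)\rarrow(K,\phi)$ in $\sE$ with $S\in\sK_{<\kappa}$, that is, a map $g\:S\rarrow K$ in $\sK$ together with $\psi\:F(S)\rarrow G(S)$ satisfying $\phi\circ F(g)=G(g)\circ\psi$; a morphism in $\Delta$ is a morphism of $\sK$ commuting with everything in sight. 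Since the forgetful functor $\sE\rarrow\sK$ is faithful, equality of morphisms in $\Delta$ reduces to equality of the underlying $\sK$\+morphisms. The strategy is therefore to first solve each filteredness problem for the underlying objects and maps in $\sK$, where the relevant canonical diagram is already $\kappa$\+filtered by Lemma~\ref{accessible-canonical-colimit}, and then to equip the resulting $\kappa$\+presentable object of $\sK$ with an inserter structure turning it into an object of $\sE'_{<\kappa}$ lying over $E$, compatibly with the given data.

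The core construction is the following analogue of Proposition~\ref{equifier-proposition}, the essential new difficulty being that we must now \emph{produce} a structure morphism rather than merely \emph{verify} an equation. Write $K=\varinjlim_{\xi\in\Xi}T_\xi$ as the $\kappa$\+filtered canonical colimit of $\kappa$\+presentable objects of $\sK$, so that $F(K)=\varinjlim_\xi F(T_\xi)$ and $G(K)=\varinjlim_\xi G(T_\xi)$ because $F$ and $G$ preserve $\kappa$\+directed colimits. Starting from any index $\xi_0$, I would build a $\lambda$\+indexed chain $\xi_0\rarrow\xi_1\rarrow\cdots$ in $\Xi$ together with morphisms $\theta_i\:F(T_{\xi_i})\rarrow G(T_{\xi_{i+1}})$ such that $\theta_i$ lifts $\phi$, in the sense that the composite $F(T_{\xi_i})\overset{\theta_i}{\rarrow}G(T_{\xi_{i+1}})\rarrow G(K)$ equals $\phi\circ(F(T_{\xi_i})\rarrow F(K))$, and such that consecutive $\theta_i$ are compatible. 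At a successor step the lift exists because $F(T_{\xi_i})$ is $\kappa$\+presentable (here we use that $F$ preserves $\kappa$\+presentability) and $G(K)$ is the $\kappa$\+directed colimit of the $G(T_\xi)$; the consecutive compatibility of the two maps $F(T_{\xi_i})\rightrightarrows G(T_{\xi_{i+2}})$ holds after composing into $G(K)$, hence can be arranged by advancing the chain one further step; at a limit stage one selects a compatible upper bound as in the equifier proof, using that $\Xi$ is $\kappa$\+filtered and the stage is $<\lambda<\kappa$. Setting $U=\varinjlim_{i<\lambda}T_{\xi_i}$, we have $U\in\sK_{<\kappa}$ since $\lambda<\kappa$, and because $F$ and $G$ preserve colimits of $\lambda$\+indexed chains the family $(\theta_i)$ assembles into a morphism $\chi\:F(U)\rarrow G(U)$ with $(U,\chi)\rarrow(K,\phi)$ a morphism of $\sE$; thus $(U,\chi)\in\sE'_{<\kappa}$ is an object of $\Delta$, which in particular shows $\Delta$ nonempty.

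To obtain upper bounds, given fewer than $\kappa$ objects $v_p\:(S_p,\psi_p)\rarrow E$ of $\Delta$, I would first use the $\kappa$\+filteredness of the canonical diagram of $\sK$ to find a single index $\xi_0$ and compatible maps $h_p\:S_p\rarrow T_{\xi_0}$ over $K$, and then run the core construction from $\xi_0$. The one point requiring care is that each leg $S_p\rarrow T_{\xi_0}\rarrow U$ must be a morphism of $\sE$, i.e.\ $\chi\circ F(h'_p)=G(h'_p)\circ\psi_p$ for $h'_p\:S_p\rarrow U$. The two sides are maps $F(S_p)\rightrightarrows G(U)$, and they already agree after composition into $G(K)$: this uses precisely that $v_p$ is a morphism of $\sE$ (so $\phi\circ F(g_p)=G(g_p)\circ\psi_p$) together with the lifting property of~$\chi$. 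Since each $F(S_p)$ is $\kappa$\+presentable, the two maps agree already in some $G(T_{\xi_{j_p}})$; and since there are fewer than $\kappa$ indices $p$, the $\kappa$\+filteredness of $\Xi$ furnishes one common upper bound $\xi^*\in\Xi$ of all the~$\xi_{j_p}$. Routing the chain through $\xi^*$ then makes every leg a morphism of $\sE$ simultaneously, so $(U,\chi)$ together with the legs $v_p\rarrow(U,\chi)$ is the required upper bound in $\Delta$. Weak coequalizers are handled in the same way: one first coequalizes the underlying parallel maps in the $\kappa$\+filtered canonical diagram of $\sK$, then equips the target with a structure by the core construction, making the leg a morphism of $\sE$ by the same batching argument; equality of the two composites in $\Delta$ is then automatic by faithfulness of $\sE\rarrow\sK$.

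The main obstacle is the bookkeeping of this transfinite construction: one must interleave, along a single $\lambda$\+indexed chain, the successor-stage extension of the $\phi$\+lift, the one-step advances needed for consecutive compatibility, and the detour through the common upper bound $\xi^*$ that secures compatibility with the incoming structure maps, all while still selecting compatible upper bounds at limit stages. Keeping these simultaneously consistent along a chain of length $\lambda$ is possible only because there are fewer than $\kappa$ side conditions, each resolved at a stage $<\lambda<\kappa$ with $\Xi$ being $\kappa$\+filtered; this is exactly the ``direct calculation'' that becomes intricate here, in contrast to the equifier case of Proposition~\ref{equifier-proposition}, where no structure morphism had to be built.
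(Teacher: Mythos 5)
Your proposal is correct and follows essentially the same route as the paper: your ``core construction'' is precisely the paper's Lemma~\ref{inserter-main-lemma}, and your batching-and-routing through a common upper bound $\xi^*$ is exactly how the paper secures compatibility of the new structure morphism with the incoming ones (the paper arranges the relevant pentagonal diagrams to commute at the stage $\xi_1$ of the $\kappa$\+filtered diagram $\Xi$ before invoking the lemma, rather than adjusting the chain afterwards, but the content is identical). The only loose phrase is the claim that the two maps $F(S_p)\rightrightarrows G(U)$ ``agree already in some $G(T_{\xi_{j_p}})$'' --- since $G(U)$ is not a term of the $\kappa$\+filtered system, the $\kappa$\+presentability of $F(S_p)$ must be applied, as you in effect then do, to their common factorizations through the early chain stage $G(T_{\xi_1})$, whose equalization at some later $\xi^*\in\Xi$ is what routing the chain through~$\xi^*$ exploits.
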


\begin{prop} \label{inserter-diagram-cofinal}
 In the context of
Proposition~\ref{inserter-diagram-kappa-filtered}, consider also
the canonical diagram $D=D_K$ of morphisms into $K$ from
(representatives of isomorphism classes of) objects $S\in\sK_{<\kappa}$,
with the indexing category~$\Delta_K$.
 Then the natural functor between the indexing categories
$\Delta_E\rarrow\Delta_K$ is cofinal (in the sense
of\/~\cite[Section~0.11]{AR}).
\end{prop}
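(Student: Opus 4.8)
The plan is to verify the two conditions (a) and (b) defining a cofinal functor in \cite[Section~0.11]{AR} for the forgetful functor $\delta\:\Delta_E\rarrow\Delta_K$, which sends an object $v\:(S,\psi)\rarrow E$ of $\Delta_E$ (with $(S,\psi)\in\sE'_{<\kappa}$) to its underlying morphism $S\rarrow K$ in $\Delta_K$, and a morphism of $\Delta_E$ to the same underlying morphism of $\sK$. The functor $\delta$ is faithful but, in contrast with the situation in the proof of Proposition~\ref{accessible-subcategory}, it is \emph{not} full: a morphism in $\Delta_K$ need not respect the inserter structures. Hence neither condition is automatic, and both must be established by hand.

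Condition~(a) is the heart of the matter: given any morphism $w\:S\rarrow K$ with $S\in\sK_{<\kappa}$, I must produce an object $(U,\psi')\in\sE'_{<\kappa}$, a morphism $(U,\psi')\rarrow E$ in $\sE$, and a factorization of $w$ through its underlying morphism $U\rarrow K$. The plan is to imitate the $\lambda$\+indexed telescoping construction from the proof of Proposition~\ref{equifier-proposition}, but now \emph{building a new inserter structure} on the colimit rather than merely checking an equation. Write $K=\varinjlim_{\xi\in\Xi}T_\xi$ as a $\kappa$\+filtered colimit of $\kappa$\+presentable objects with structure maps $u_\xi\:T_\xi\rarrow K$; then $F(K)=\varinjlim_\xi F(T_\xi)$ and $G(K)=\varinjlim_\xi G(T_\xi)$, and $w$ factors through some $u_{\xi_0}$. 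Using that each $F(T_{\xi_i})$ is $\kappa$\+presentable while $G(K)$ is the above $\kappa$\+filtered colimit, I factor the composite $\phi\circ F(u_{\xi_i})\:F(T_{\xi_i})\rarrow G(K)$ through a morphism $\psi_i\:F(T_{\xi_i})\rarrow G(T_{\xi_{i+1}})$ with $G(u_{\xi_{i+1}})\circ\psi_i=\phi\circ F(u_{\xi_i})$. Proceeding transfinitely along the ordinals $i<\lambda$, choosing upper bounds in the $\kappa$\+filtered $\Xi$ at limit stages (possible since each stage has $<\kappa$ predecessors), I obtain a $\lambda$\+indexed chain $(T_{\xi_i})_{i<\lambda}$, with transition maps $\tau_i\:T_{\xi_i}\rarrow T_{\xi_{i+1}}$, together with the shifted structure maps $\psi_i$. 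Setting $U=\varinjlim_{i<\lambda}T_{\xi_i}$, one has $U\in\sK_{<\kappa}$ because $\lambda<\kappa$ and $\kappa$\+presentable objects are closed under $\kappa$\+small colimits \cite[Proposition~1.16]{AR}; since $F$ preserves colimits of $\lambda$\+indexed chains, $F(U)=\varinjlim_{i<\lambda}F(T_{\xi_i})$, and the composites $F(T_{\xi_i})\overset{\psi_i}\rarrow G(T_{\xi_{i+1}})\rarrow G(U)$ assemble into a morphism $\psi'\:F(U)\rarrow G(U)$. The induced morphism $b\:U\rarrow K$ then satisfies $\phi\circ F(b)=G(b)\circ\psi'$, so $(U,\psi')\in\sE'_{<\kappa}$ maps to $E$ and receives the desired factorization of~$w$.

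The main obstacle lies in the previous step: the maps $F(T_{\xi_i})\rarrow G(U)$ just described form a cocone --- so that $\psi'$ is well defined --- only if the $\psi_i$ are mutually compatible, i.e.\ $\psi_{i+1}\circ F(\tau_i)$ and $G(\tau_{i+1})\circ\psi_i$ agree as morphisms $F(T_{\xi_i})\rarrow G(T_{\xi_{i+2}})$. These two morphisms out of the $\kappa$\+presentable object $F(T_{\xi_i})$ become equal after composition with $G(u_{\xi_{i+2}})$, since both then equal $\phi\circ F(u_{\xi_i})$; hence they are equalized after a further transition in $\Xi$, and I arrange the compatibility by enlarging the index $\xi_{i+2}$ accordingly. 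At a limit stage there are $<\kappa$ such constraints to meet simultaneously, which is again possible by $\kappa$\+filteredness of $\Xi$. This bookkeeping --- absent in the equifier case, where no structure map has to be reconstructed --- is exactly what makes the inserter argument ``much more complicated''.

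Condition~(b) asks that, given $w\:S\rarrow K$ and two objects $(v_1,a_1)$, $(v_2,a_2)$ of the comma category $w/\delta$, the underlying factorizations $a_1$, $a_2$ be equalized after a common morphism of $\Delta_E$. I would first amalgamate $v_1$ and $v_2$ to a common $v_3\in\Delta_E$ using the $\kappa$\+filteredness of $\Delta_E$ (Proposition~\ref{inserter-diagram-kappa-filtered}), reducing to a single parallel pair $h_1,h_2\:S\rightrightarrows S_3$ whose composites with the underlying $u_3\:S_3\rarrow K$ coincide. Factoring $u_3$ through some $T_\eta$ and using $\kappa$\+presentability of $S$ with $\kappa$\+filteredness of $\Xi$, I equalize $h_1$ and $h_2$ already at the level of $\sK$ by a morphism $m\:S_3\rarrow S_4$ over $K$, where $(S_4,\psi_4)\in\sE'_{<\kappa}$ is produced by applying condition~(a) to the structure morphism $T_\eta\rarrow K$. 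It then remains to promote $m$ to a morphism of $\Delta_E$, that is, to enforce $G(m)\circ\psi_3=\psi_4\circ F(m)$; these two maps $F(S_3)\rightrightarrows G(S_4)$ already agree after $G(u_4)$ (both equalling $\phi\circ F(u_3)$), so a further application of the telescoping construction equalizes them after a suitable morphism out of $(S_4,\psi_4)$ in $\Delta_E$. Combining conditions~(a) and~(b) yields the cofinality of~$\delta$.
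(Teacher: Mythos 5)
Your proposal is correct, and its engine --- the $\lambda$\+indexed telescoping construction that builds a new inserter structure on the colimit of a chain chosen inside a $\kappa$\+filtered presentation of $K$, with the compatibility of the $\psi_i$ enforced by accumulating fewer than $\kappa$ constraints at each (in particular each limit) stage --- is exactly the content of the paper's Lemma~\ref{inserter-main-lemma}, which the paper proves once and then invokes for each condition. For condition~(a) your argument is essentially the paper's with that lemma inlined: the paper factors $P\rarrow K$ through some $T_{\xi_1}$, produces $\sigma\:F(P)\rarrow G(T_{\xi_1})$, and applies the lemma with $S=P$, $T=T_{\xi_1}$. For condition~(b) your route is genuinely different in its organization: you first amalgamate the two targets using the already-established $\kappa$\+filteredness of $\Delta_E$ (Proposition~\ref{inserter-diagram-kappa-filtered}), reduce to a parallel pair $h_1,h_2\:S\rightrightarrows S_3$ over $K$, equalize it in $\sK$, and then upgrade the equalizing morphism to a morphism of $\Delta_E$; the paper never invokes filteredness here, but instead factors both $R'\rarrow K$ and $R''\rarrow K$ through a single $S_{\xi_0}$ with the $P$\+square already equalized, chooses one $\sigma\:F(S_{\xi_0})\rarrow G(S_{\xi_1})$ compatible with both $\rho'$ and $\rho''$ (the two pentagonal diagrams), and applies the main lemma once. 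Both routes work. The only place where your sketch is thin is the step you compress into ``a further application of the telescoping construction'': to make $n\circ m$ a morphism of $\sE$ you must choose $n\:(S_4,\psi_4)\rarrow(U,\psi_U)$ so that \emph{simultaneously} $G(n)$ equalizes $\psi_4\circ F(m)$ and $G(m)\circ\psi_3$ and $\psi_U\circ F(n)=G(n)\circ\psi_4$ holds; this requires folding the pentagonal compatibility for $\psi_4$ (the $\sigma$\+clause of Lemma~\ref{inserter-main-lemma}) into the same transfinite recursion that equalizes the pair. That is achievable by precisely the argument you describe, so there is no gap --- only a point where the bookkeeping deserves to be written out in full.
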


 The proofs of Propositions~\ref{inserter-diagram-kappa-filtered}
and~\ref{inserter-diagram-cofinal} are based on the following lemma.

\begin{lem} \label{inserter-main-lemma}
 Let $E=(K,\phi)\in\sE$ be an object, let $S$, $T\in\sK_{<\kappa}$
be $\kappa$\+presentable objects, and let $\sigma\:F(S)\rarrow
G(T)$ be a morphism in\/~$\sL$.
 Let $S\rarrow T$ and $T\rarrow K$ be morphisms in\/~$\sK$.
 Assume that the pentagonal diagram
$$
 \xymatrix{
  F(S) \ar[r] \ar[d]_\sigma & F(T) \ar[r] & F(K) \ar[d]^\phi \\
  G(T) \ar[rr] & & G(K) 
 }
$$
is commutative in\/~$\sL$.
 Then there exists an object $B=(U,\psi)\in\sE'_{<\kappa}$ together
with a morphism $(U,\psi)\rarrow(K,\phi)$ in\/ $\sE$ and a morphism
$T\rarrow U$ in\/ $\sK$ such that the pentagonal diagram
$$
 \xymatrix{
  F(S) \ar[r] \ar[d]_\sigma & F(T) \ar[r] & F(U) \ar[d]^\psi \\
  G(T) \ar[rr] & & G(U) 
 }
$$
is commutative in\/ $\sL$ and the triangular diagram
$T\rarrow U\rarrow K$ is commutative in\/~$\sK$.
\end{lem}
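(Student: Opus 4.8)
The plan is to build $U$ as the colimit of a single $\lambda$\+indexed chain of $\kappa$\+presentable objects, taking only one colimit at the very end, in the spirit of the equifier argument. Using Lemma~\ref{accessible-canonical-colimit}, write $K=\varinjlim_{\xi\in\Xi}T_\xi$ as a $\kappa$\+filtered colimit of $\kappa$\+presentable objects $T_\xi$. Since $G$ preserves $\kappa$\+directed colimits, $G(K)=\varinjlim_\xi G(T_\xi)$; and since $F$ sends $\kappa$\+presentable objects to $\kappa$\+presentable ones, every morphism into $G(K)$ out of an object of the form $F(\text{$\kappa$\+presentable})$ factors through some $G(T_\xi)$, and any two such morphisms agreeing in $G(K)$ already agree at some stage $G(T_{\xi'})$. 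These factorization and equalization properties of the system $(T_\xi)$, together with $\kappa$\+filteredness of $\Xi$ (which lets me bound and equalize fewer than $\kappa$ data at once), are the only tools I will use.

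First I would construct, by transfinite recursion on $i<\lambda$, $\kappa$\+presentable objects $U_i$ (with $U_0=T$ and $U_i=T_{\xi_i}$ for $i>0$), compatible morphisms $u_i\colon U_i\rarrow K$, and morphisms $\psi_i\colon F(U_i)\rarrow G(U_{i+1})$, maintaining two invariants: (i)~$G(u_{i+1})\circ\psi_i=\phi\circ F(u_i)$, so that $\psi_i$ lifts the restriction of $\phi$ one step up the chain; and (ii)~for all $j<i$, $\psi_i\circ F(U_j\rarrow U_i)=G(U_{j+1}\rarrow U_{i+1})\circ\psi_j$, the ``shifted compatibility'' that will let the $\psi_i$ cohere. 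Defining $U_i$ for a limit $i$ is routine: take $\xi_i\in\Xi$ above all earlier $\xi_j$, with $u_i$ the structure map. The real work lies in producing $\psi_i$ together with $U_{i+1}$: here I factor $\phi\circ F(u_i)$ through some $G(T_\xi)$ to get a candidate for $\psi_i$ (giving~(i)), and then enlarge $\xi$ inside $\Xi$ to secure the instances $\{j<i\}$ of~(ii). When $i$ is a successor only the top instance $j=i-1$ is new and the rest follow by transitivity, so a single equalization suffices; when $i$ is a limit there are $|i|<\kappa$ instances, equalized simultaneously by $\kappa$\+filteredness. In every case the two sides of each instance already agree after composing with $G(u_{i+1})$, since by~(i) at the two stages they both reduce to $\phi\circ F(u_j)$; this is exactly what makes the equalization possible.

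Then I would assemble the output. Put $U=\varinjlim_{i<\lambda}U_i$; as a $\kappa$\+small colimit ($\lambda<\kappa$) of $\kappa$\+presentable objects, $U\in\sK_{<\kappa}$ by~\cite[Proposition~1.16]{AR}, and $F$, $G$ preserve it, so $F(U)=\varinjlim_i F(U_i)$ and $G(U)=\varinjlim_i G(U_i)$. Let $u\colon U\rarrow K$ be induced by the $u_i$; the triangle $T\rarrow U\rarrow K$ then commutes by construction. Invariant~(ii) says precisely that the morphisms $\bar\psi_i=G(U_{i+1}\rarrow U)\circ\psi_i\colon F(U_i)\rarrow G(U)$ form a cocone over the chain $(F(U_i))_{i<\lambda}$ (this is where~(ii) at the limit ordinals is needed), so they induce a morphism $\psi\colon F(U)\rarrow G(U)$, and $(U,\psi)\in\sE'_{<\kappa}$. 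Checking componentwise, invariant~(i) gives $G(u)\circ\psi=\phi\circ F(u)$, i.e. $u$ is a morphism $(U,\psi)\rarrow(K,\phi)$ in $\sE$. For the output pentagon I must also arrange, at the first step of the recursion, one further equalization making $\psi_0\circ F(S\rarrow T)$ and $G(U_0\rarrow U_1)\circ\sigma$ agree (they agree after $G(u_1)$ by the hypothesis pentagon and~(i)); this yields $\psi\circ F(T\rarrow U)\circ F(S\rarrow T)=G(T\rarrow U)\circ\sigma$ in $G(U)$, which is the asserted pentagon.

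The main obstacle is the construction of the single morphism $\psi\colon F(U)\rarrow G(U)$. In the equifier this issue did not arise: there $\phi$ and $\psi$ were given \emph{natural transformations}, and one only had to force them to agree. Here the inserter datum on $U$ must be synthesized from scratch, and the restriction of $\phi$ to $F(U_i)$ lands in $G(K)$ rather than in $G(U)$; lifting it back forces the one\+step shift $\psi_i\colon F(U_i)\rarrow G(U_{i+1})$, and the delicate point is to make these shifted maps cohere into a genuine cocone. That coherence is invariant~(ii), and the reason it can be maintained throughout the recursion---especially across limit ordinals, where fewer than $\kappa$ compatibility conditions must be equalized at once---is precisely the interplay of $\lambda<\kappa$ with the $\kappa$\+filteredness of $\Xi$ and the preservation of $\lambda$\+chain and $\kappa$\+directed colimits by $F$ and~$G$.
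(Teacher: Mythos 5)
Your proposal is correct and follows essentially the same strategy as the paper's proof: represent $K$ as a $\kappa$\+filtered colimit of $\kappa$\+presentable objects, recursively build a $\lambda$\+indexed chain inside the index category together with shifted maps $\psi_i\:F(U_i)\rarrow G(U_{i+1})$ satisfying exactly the two invariants you state (lifting $\phi$ one step up, and the shifted coherence squares), using $\kappa$\+presentability of the $F(U_j)$ and $\kappa$\+filteredness of $\Xi$ to equalize the fewer than $\kappa$ compatibility conditions at each stage, and finally pass to the colimit over~$\lambda$. The only (harmless) deviations are that you seed the chain with $T$ itself rather than with a $T_{\xi_0}$ through which $T\rarrow K$ factors, and that you note that at successor stages a single new equalization suffices by transitivity.
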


\begin{proof}
 Let $K=\varinjlim_{\xi\in\Xi}T_\xi$ be a representation of the object
$K$ as a $\kappa$\+filtered colimit of $\kappa$\+presentable objects in
the category~$\sK$.
 Then we have $G(K)=\varinjlim_{\xi\in\Xi}G(T_\xi)$ in $\sL$
and $F(S)$, $F(T_\xi)\in\sL_{<\kappa}$.
 There exists an index $\xi_0\in\Xi$ such that the morphism $T\rarrow K$
factorizes through the morphism $T_{\xi_0}\rarrow K$ in~$\sK$.
 Then the heptagonal diagram
$$
 \xymatrix{
  F(S) \ar[r] \ar[d]_\sigma & F(T) \ar[r] & F(T_{\xi_0}) \ar[r]
  & F(K) \ar[d]^\phi \\
  G(T) \ar[rr] & & G(T_{\xi_0}) \ar[r] & G(K) 
 }
$$
is commutative in~$\sL$.

 Since $G(K)=\varinjlim_{\xi\in\Xi}G(T_\xi)$ and $F(T_{\xi_0})
\in\sL_{<\kappa}$, there exists an index $\xi_1\in\Xi$ such that
the composition $F(T_{\xi_0})\rarrow F(K)\rarrow G(K)$ factorizes
through the morphism $G(T_{\xi_1})\rarrow G(K)$ in~$\sL$:
$$
 \xymatrix{
  F(T_{\xi_0}) \ar[r] \ar[d]_{\psi_0} & F(K) \ar[d]^\phi \\
  G(T_{\xi_1}) \ar[r] & G(K) 
 }
$$
 Moreover, since $G(K)=\varinjlim_{\xi\in\Xi}G(T_\xi)$ and
$F(S)\in\sL_{<\kappa}$, one can choose the index~$\xi_1$ together
with an arrow $\xi_0\rarrow\xi_1$ in $\Xi$ such that
the hexagonal diagram
$$
 \xymatrix{
  F(S) \ar[r] \ar[d]_\sigma & F(T) \ar[r] & F(T_{\xi_0})
  \ar[d]^{\psi_0} \\
  G(T) \ar[r] & G(T_{\xi_0}) \ar[r] & G(T_{\xi_1}) 
 }
$$
is commutative in~$\sL$.
 Notice that the pentagonal diagram
$$
 \xymatrix{
  F(T_{\xi_0}) \ar[r] \ar[d]_{\psi_0} & F(T_{\xi_1}) \ar[r]
  & F(K) \ar[d]^\phi \\
  G(T_{\xi_1}) \ar[rr] && G(K) 
 }
$$
is also commutative in~$\sL$.

 Hence one can choose an index $\xi_2\in\Xi$ together with an arrrow
$\xi_1\rarrow\xi_2$ in $\Xi$ such that the composition
$F(T_{\xi_1})\rarrow F(K)\rarrow G(K)$ factorizes through
the morphism $G(T_{\xi_2})\rarrow G(K)$:
$$
 \xymatrix{
  F(T_{\xi_1}) \ar[r] \ar[d]_{\psi_1} & F(K) \ar[d]^\phi \\
  G(T_{\xi_2}) \ar[r] & G(K) 
 }
$$
and the square diagram
$$
 \xymatrix{
  F(T_{\xi_0}) \ar[r] \ar[d]_{\psi_0} & F(T_{\xi_1}) \ar[d]^{\psi_1} \\
  G(T_{\xi_1}) \ar[r] & G(T_{\xi_2}) 
 }
$$
is commutative in~$\sL$.
 Then the pentagonal diagram
$$
 \xymatrix{
  F(T_{\xi_1}) \ar[r] \ar[d]_{\psi_1} & F(T_{\xi_2}) \ar[r]
  & F(K) \ar[d]^\phi \\
  G(T_{\xi_2}) \ar[rr] && G(K) 
 }
$$ 
is commutative in~$\sL$.

 Proceeding in this way, we construct a $\lambda$\+indexed chain
of indices $\xi_i\in\Xi$ and arrows $\xi_i\rarrow\xi_j$ in $\Xi$ for
all $0\le i<j<\lambda$ together with morphisms $\psi_i\:F(T_{\xi_i})
\rarrow G(T_{\xi_{i+1}})$ in $\sL$ such that, for all ordinals
$0\le i<\lambda$, the square diagram 
$$
 \xymatrix{
  F(T_{\xi_i}) \ar[r] \ar[d]_{\psi_i} & F(K) \ar[d]^\phi \\
  G(T_{\xi_{i+1}}) \ar[r] & G(K) 
 }
$$
is commutative in $\sL$ and, for all ordinals $0\le i<j<\lambda$,
the square diagram
$$
 \xymatrix{
  F(T_{\xi_i}) \ar[r] \ar[d]_{\psi_i} & F(T_{\xi_j}) \ar[d]^{\psi_j} \\
  G(T_{\xi_{i+1}}) \ar[r] & G(T_{\xi_{j+1}}) 
 }
$$
is commutative in~$\sL$.

 Specifically, similarly to the proof of
Proposition~\ref{equifier-proposition}, for a limit ordinal $k<\lambda$,
we just pick an index $\xi_k\in\Xi$ and arrows $\xi_i\rarrow\xi_k$ in
$\Xi$ for all $i<k$ making the triangles $\xi_i\rarrow\xi_j\rarrow\xi_k$
commutative in $\Xi$ for all $i<j<k$.
 For a successor ordinal $k=j+1<\lambda$, we choose an index
$\xi_{j+1}\in\Xi$ together with an arrow $\xi_j\rarrow\xi_{j+1}$
in $\Xi$ such that the composition $F(T_{\xi_j})\rarrow F(K)\rarrow
G(K)$ factorizes through the morphism $G(T_{\xi_{j+1}})\rarrow G(K)$:
$$
 \xymatrix{
  F(T_{\xi_j}) \ar[r] \ar[d]_{\psi_j} & F(K) \ar[d]^\phi \\
  G(T_{\xi_{j+1}}) \ar[r] & G(K) 
 }
$$
and the square diagram
$$
 \xymatrix{
  F(T_{\xi_i}) \ar[r] \ar[d]_{\psi_i} & F(T_{\xi_j}) \ar[d]^{\psi_j} \\
  G(T_{\xi_{i+1}}) \ar[r] & G(T_{\xi_{j+1}}) 
 }
$$
is commutative in $\sL$ for all $i<j$.
 The latter condition can be satisfied because the pentagonal
diagrams
$$
 \xymatrix{
  F(T_{\xi_i}) \ar[r] \ar[d]_{\psi_i} & F(T_{\xi_j}) \ar[r]
  & F(K) \ar[d]^\phi \\
  G(T_{\xi_{i+1}}) \ar[rr] && G(K) 
 }
$$ 
are commutative in $\sL$ for all $i<j$ and the index category $\Xi$
is $\kappa$\+filtered.

 After the construction is finished, it remains to put
$U=\varinjlim_{i<\lambda}T_{\xi_i}$, and define $\psi\:F(U)\rarrow
G(U)$ to be the colimit of the morphisms $\psi_i\:F(T_{\xi_i})\rarrow
G(T_{\xi_{i+1}})$.
 It is important here that $F(U)=\varinjlim_{i<\lambda}F(T_{\xi_i})$.
 We have $U\in\sK_{<\kappa}$ for the reason explained in
the proof of Proposition~\ref{equifier-proposition}.
\end{proof}

\begin{proof}[Proof of
Proposition~\ref{inserter-diagram-kappa-filtered}]
 Firstly, let $v_a\:(S_a,\psi_a)\rarrow(K,\phi)$ be a family of
morphisms into $(K,\phi)$ from objects $(S_a,\psi_a)\in\sE'_{<\kappa}$,
with the set of indices~$a$ having cardinality smaller than~$\kappa$.
 We need to show that there is a morphism $u\:(T,\tau)\rarrow(K,\phi)$
into $(K,\phi)$ from an object $(T,\tau)\in\sE'_{<\kappa}$ such that
all the morphisms~$v_a$ factorize through~$u$.
 For this purpose, choose a representation
$K=\varinjlim_{\xi\in\Xi}S_\xi$ of the object $K\in\sK$ as
a $\kappa$\+filtered colimit of $\kappa$\+presentable objects
$S_\xi\in\sK_{<\kappa}$.

 Then there exists an index $\xi_0\in\Xi$ such that all the morphisms
$v_a\:S_a\rarrow K$ factorize through the morphism $S_{\xi_0}\rarrow K$
in~$\sK$.
 The hexagonal diagram
$$
 \xymatrix{
  F(S_a) \ar[r] \ar[d]_{\psi_a} & F(S_{\xi_0}) \ar[r] & F(K)
  \ar[d]^\phi \\
  G(S_a) \ar[r] & G(S_{\xi_0}) \ar[r] & G(K) 
 }
$$
is commutative in $\sL$ for all indices~$a$.
 Therefore, one can choose an index $\xi_1\in\Xi$ together with
an arrow $\xi_0\rarrow\xi_1$ in $\Xi$ such that the composition
$F(S_{\xi_0})\rarrow F(K)\rarrow G(K)$ factorizes through
the morphism $G(S_{\xi_1})\rarrow G(K)$:
$$
 \xymatrix{
  F(S_{\xi_0}) \ar[r] \ar[d]_\sigma & F(K) \ar[d]^\phi \\
  G(S_{\xi_1}) \ar[r] & G(K) 
 }
$$
and the pentagonal diagrams
$$
 \xymatrix{
  F(S_a) \ar[rr] \ar[d]_{\psi_a} && F(S_{\xi_0}) \ar[d]^\sigma \\
  G(S_a) \ar[r] & G(S_{\xi_0}) \ar[r] & G(S_{\xi_1})
 }
$$
are commutative in $\sL$ for all~$a$.
 Then the pentagonal diagram
$$
 \xymatrix{
  F(S_{\xi_0}) \ar[r] \ar[d]_\sigma & F(S_{\xi_1}) \ar[r]
  & F(K) \ar[d]^\phi \\
  G(S_{\xi_1}) \ar[rr] && G(K) 
 }
$$
is also commutative in~$\sL$.
 It remains to put $S=S_{\xi_0}$ and $T=S_{\xi_1}$, and use
Lemma~\ref{inserter-main-lemma}.

 Secondly, let $v\:(P,\pi)\rarrow(K,\phi)$ be a morphism into
$(K,\phi)$ from an object $(P,\pi)\in\sE'_{<\kappa}$, and let
$w_a\:(R,\rho)\rarrow(P,\pi)$ be a family of parallel morphisms
into $(P,\pi)$ from an object $(R,\rho)\in\sE'_{<\kappa}$, with
the set of indices~$a$ having cardinality smaller than~$\kappa$.
 Assume that all the morphisms $vw_a\:(R,\rho)\rarrow(K,\phi)$ are
equal to each other.
 We need to show that the morphism $v\:(P,\pi)\rarrow(K,\phi)$ can be
factorized as $(P,\pi)\overset u\rarrow(U,\psi)\rarrow(K,\phi)$ in
such a way that $(U,\psi)\in\sE'_{<\kappa}$ and all the morphisms
$uw_a\:(R,\rho)\rarrow(U,\psi)$ are equal to each other.

 For this purpose, choose a representation
$K=\varinjlim_{\xi\in\Xi}S_\xi$ of the object $K\in\sK$ as
a $\kappa$\+filtered colimit of $\kappa$\+presentable objects
$S_\xi\in\sK_{<\kappa}$.
 Then there exists an index $\xi_0\in\Xi$ such that the morphism
$v\:P\rarrow K$ factorizes through the morphism $S_{\xi_0}\rarrow K$
and all the compositions $R\overset{w_a}\rarrow P\rarrow S_{\xi_0}$
are equal to each other.
 The hexagonal diagram
$$
 \xymatrix{
  F(P) \ar[r] \ar[d]_\pi & F(S_{\xi_0}) \ar[r] & F(K)
  \ar[d]^\phi \\
  G(P) \ar[r] & G(S_{\xi_0}) \ar[r] & G(K) 
 }
$$
is commutative in~$\sL$.
 
 Therefore, one can choose an index $\xi_1\in\Xi$ together with
an arrow $\xi_0\rarrow\xi_1$ in $\Xi$ such that the composition
$F(S_{\xi_0})\rarrow F(K)\rarrow G(K)$ factorizes through
the morphism $G(S_{\xi_1})\rarrow G(K)$:
$$
 \xymatrix{
  F(S_{\xi_0}) \ar[r] \ar[d]_\sigma & F(K) \ar[d]^\phi \\
  G(S_{\xi_1}) \ar[r] & G(K) 
 }
$$
and the pentagonal diagram
$$
 \xymatrix{
  F(P) \ar[rr] \ar[d]_\pi && F(S_{\xi_0}) \ar[d]^\sigma \\
  G(P) \ar[r] & G(S_{\xi_0}) \ar[r] & G(S_{\xi_1})
 }
$$
is commutative in~$\sL$.
 Once again, it remains to put $S=S_{\xi_0}$ and $T=S_{\xi_1}$, and
refer to Lemma~\ref{inserter-main-lemma}.
\end{proof}

\begin{proof}[Proof of
Proposition~\ref{inserter-diagram-cofinal}]
 Firstly, let $P\rarrow K$ be a morphism into $K$ from an object
$P\in\sK_{<\kappa}$.
 We need to show that there exists an object $(U,\psi)\in\sE'_{<\kappa}$
together with a morphism $(U,\psi)\rarrow(K,\phi)$ in $\sE$ and
a morphism $P\rarrow U$ in $\sK$ such that the triangular diagram
$P\rarrow U\rarrow K$ is commutative in~$\sK$.

 For this purpose, choose a representation
$K=\varinjlim_{\xi\in\Xi}T_\xi$ of the object $K\in\sK$ as
a $\kappa$\+filtered colimit of $\kappa$\+presentable objects
$T_\xi\in\sK_{<\kappa}$.
 Then there exists an index $\xi_1\in\Xi$ such that the morphism
$P\rarrow K$ factorizes through the morphism $T_{\xi_1}\rarrow K$
in $\sK$ and the composition $F(P)\rarrow F(K)\rarrow G(K)$ factorizes
through the morphism $G(T_{\xi_1})\rarrow G(K)$ in~$\sL$:
$$
 \xymatrix{
  F(P) \ar[r] \ar[d]_\sigma & F(K) \ar[d]^\phi \\
  G(T_{\xi_1}) \ar[r] & G(K) 
 }
$$
 It remains to put $S=P$ and $T=T_{\xi_1}$, and refer to
Lemma~\ref{inserter-main-lemma}.

 Secondly, let $(R',\rho')$ and $(R'',\rho'')$ be two objects
of $\sE'_{<\kappa}$, let
$$
 (R',\rho')\lrarrow(K,\phi)\llarrow(R'',\rho'')
$$
be two morphisms in $\sE$, and let $R'\larrow P\rarrow R''$ be two
morphisms in $\sK$ such that the square diagram
$$
 \xymatrix{
  & P \ar[ld] \ar[rd] \\
  R' \ar[r] & K & R'' \ar[l]
 }
$$
is commutative in~$\sK$.
 We need to show that there exists an object $(U,\psi)\in\sE_{<\kappa}$
together with two morphisms $(R',\rho')\rarrow(U,\psi)\larrow
(R'',\rho'')$ and a morphism $(U,\psi)\rarrow(K,\phi)$ in $\sE$
such that the two triangular diagrams
$$
 \xymatrix{
  (R',\rho') \ar[r] \ar[rd] & (U,\psi) \ar[d]
  & (R'',\rho'') \ar[l] \ar[ld] \\ & (K,\phi)
 }
$$
are commutative in $\sE$ and the square diagram
$$
 \xymatrix{
  & P \ar[ld] \ar[rd] \\
  R' \ar[r] & U & R'' \ar[l]
 }
$$
is commutative in~$\sK$.

 For this purpose, choose a representation
$K=\varinjlim_{\xi\in\Xi}S_\xi$ of the object $K\in\sK$ as
a $\kappa$\+filtered colimit of $\kappa$\+presentable objects
$S_\xi\in\sK_{<\kappa}$.
 Then there exists an index $\xi_0\in\Xi$ such that both the morphisms
$R'\rarrow K$ and $R''\rarrow K$ factorize through the morphism
$S_{\xi_0}\rarrow K$ in $\sK$ and the square diagram
$$
 \xymatrix{
  & P \ar[ld] \ar[rd] \\
  R' \ar[r] & S_{\xi_0} & R'' \ar[l]
 }
$$
is commutative in~$\sK$.
 So the whole diagram
$$
 \xymatrix{
  & P \ar[ld] \ar[rd] \\
  R' \ar[r] \ar[rd] & S_{\xi_0} \ar[d] & R'' \ar[l] \ar[ld] \\
  & K
 }
$$
is commutative.
 Then the two hexagonal diagrams
$$
 \xymatrix{
  F(R') \ar[r] \ar[d]_{\rho'} & F(S_{\xi_0}) \ar[r] & F(K) \ar[d]^\phi
  & F(S_{\xi_0}) \ar[l] & F(R'') \ar[l] \ar[d]^{\rho''} \\
  G(R') \ar[r] & G(S_{\xi_0}) \ar[r] & G(K)
  & G(S_{\xi_0}) \ar[l] & G(R'') \ar[l]
 }
$$
are commutative in~$\sL$.

 Hence one can choose an index $\xi_1\in\Xi$ together with an arrow
$\xi_0\rarrow\xi_1$ in $\Xi$ such that the composition
$F(S_{\xi_0})\rarrow F(K)\rarrow G(K)$ factorizes through
the morphism $G(\xi_1)\rarrow G(K)$:
$$
 \xymatrix{
  F(S_{\xi_0}) \ar[r] \ar[d]_\sigma & F(K) \ar[d]^\phi \\
  G(S_{\xi_1}) \ar[r] & G(K) 
 }
$$
and the two pentagonal diagrams
$$
 \xymatrix{
  F(R') \ar[rr] \ar[d]_{\rho'} && F(S_{\xi_0}) \ar[d]^\sigma &&
  F(R'') \ar[ll] \ar[d]^{\rho''} \\
  G(R') \ar[r] & G(S_{\xi_0}) \ar[r] & G(S_{\xi_1})
  & G(S_{\xi_0}) \ar[l] & G(R'') \ar[l]
 }
$$
are commutative in~$\sL$.
 Then it remains to put $S=S_{\xi_0}$ and $T=S_{\xi_1}$, and refer
to Lemma~\ref{inserter-main-lemma}.
\end{proof}

 Finally, we are ready to prove the theorem.

\begin{proof}[Proof of Theorem~\ref{inserter-theorem}]
 By Proposition~\ref{inserter-obvious-inclusion}, all the pairs
$(S,\psi)\in\sE$ with $S\in\sK_{<\kappa}$ are $\kappa$\+presentable
in~$\sE$.
 It is also clear that the full subcategory $\sE'_{<\kappa}$ of
all such pairs $(S,\psi)$ is closed under retracts in~$\sE$ (since
the full subcategory $\sK_{<\kappa}$ is closed under retracts
in~$\sK$).
 Let $\sS\subset\sE$ be a set of representatives of isomorphism
classes of objects from $\sE'_{<\kappa}$.
 In view of~\cite[Remarks~1.9 and~2.2(4)]{AR} (see the discussion
in Section~\ref{preliminaries-secn}), it suffices to prove that,
for every object $E\in\sE$, the indexing category $\Delta=\Delta_E$
of the canonical diagram $C=D_E$ of morphisms into $E$ from objects
of $\sS$ is $\kappa$\+filtered, and that $E=\varinjlim_{v\in\Delta}C_v$.

 The former assertion is the result of
Proposition~\ref{inserter-diagram-kappa-filtered}.
 To prove the latter one, notice that by
Lemma~\ref{accessible-canonical-colimit}
we have $K=\varinjlim_{w\in\Delta_K}D_w$ in $\sK$, where
$D\:\Delta_K\rarrow\sK$ is the canonical diagram of morphisms
into $K$ from representatives of isomorphisms classes of objects
from~$K_{<\kappa}$.
 Since the natural functor $\delta\:\Delta\rarrow\Delta_K$ between
the indexing categories is cofinal by
Proposition~\ref{inserter-diagram-cofinal}, it follows that
$K=\varinjlim_{v\in\Delta_E}D_{\delta(v)}$ in~$\sK$.
 As the forgetful functor $\sE\rarrow\sK$ is conservative and
preserves $\kappa$\+filtered colimits, we can conclude that
$E=\varinjlim_{v\in\Delta_E}C_v$ in~$\sE$.
\end{proof}

\begin{rem} \label{joint-inserter}
 In applications of Theorem~\ref{inserter-theorem}, one may be
interested in the \emph{joint inserter} of a family of pairs
of functors.
 Let $\sK$ be a $\kappa$\+accessible category and $(\sL_i)_{i\in I}$
be a family of $\kappa$\+accessible categories.
 Let $F_i$, $G_i\:\sK\rightrightarrows\sL_i$ be a family of pairs of
parallel functors, all of them preserving $\kappa$\+directed colimits
and colimits of $\lambda$\+indexed chains.
 Assume further that the functors $F_i$ take $\kappa$\+presentable
objects to $\kappa$\+presentable objects, and that the cardinality of
the indexing set $I$ is smaller than~$\kappa$.

 Let $\sE$ be the category of pairs $(K,\phi)$, where $K\in\sK$ is
an object and $\phi=(\phi_i)_{i\in I}$ is a family of morphisms
$\phi_i\:F_i(K)\rarrow G_i(K)$ in~$\sL_i$.
 Then the category $\sE$ is $\kappa$\+accessible, and
the $\kappa$\+presentable objects of $\sE$ are precisely all
the pairs $(S,\psi)$ where $S$ is a $\kappa$\+presentable object
of~$\sK$.
 This assertion can be deduced from
Proposition~\ref{product-proposition} and Theorem~\ref{inserter-theorem}
by passing to the Cartesian product category $\sL=\prod_{i\in I}\sL_i$.
 The family of functors $F_i\:\sK\rarrow\sL_i$ defines a functor
$F\:\sK\rarrow\sL$, and the family of functors $G_i\:\sK\rarrow\sL_i$
defines a functor $G\:\sK\rarrow\sL$.
 It follows from Proposition~\ref{product-proposition} that all
the assumptions of Theorem~\ref{inserter-theorem} are satisfied by
the category $\sL$ and the pair of functors $F$,~$G$.
\end{rem}

\Section{Pseudopullback} \label{pseudopullback-secn}

 As in Sections~\ref{equifier-secn} and~\ref{inserter-secn}, we
consider a regular cardinal~$\kappa$ and a smaller infinite
cardinal $\lambda<\kappa$.
 Let $\sA$, \,$\sB$, and $\sC$ be $\kappa$\+accessible categories
in which all $\lambda$\+indexed chains (of objects and morphisms)
have colimits.
 Let $\Theta_\sA:\sA\rarrow\sC$ and $\Theta_\sB\:\sB\rarrow\sC$ be two
functors preserving $\kappa$\+directed colimits and colimits of
$\lambda$\+indexed chains, and taking $\kappa$\+presentable objects
to $\kappa$\+presentable objects.

 Let $\sD$ be the category of triples $(A,B,\theta)$, where
$A\in\sA$ and $B\in\sB$ are objects and
$\theta\:\Theta_\sA(A)\simeq\Theta_\sB(B)$ is an isomorphism in~$\sC$.
 This construction of the category $\sD$ is known as
the \emph{pseudopullback}~\cite[Proposition~3.1]{CR},
\cite[Section~2]{RR}.
 The aim of this section is to deduce the following corollary
of Theorems~\ref{equifier-theorem} and~\ref{inserter-theorem}.

\begin{cor} \label{pseudopullback-corollary}
 In the assumptions above, the category\/ $\sD$ is\/
$\kappa$\+accessible.
 The $\kappa$\+presentable objects of\/ $\sD$ are precisely
all the triples $(A,B,\theta)$, where $A$ is a $\kappa$\+presentable
object of\/ $\sA$ and $B$ is a $\kappa$\+presentable object of\/~$\sB$.
\end{cor}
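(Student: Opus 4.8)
The plan is to realise the pseudopullback $\sD$ by applying, in succession, the product construction, a joint inserter, and a joint equifier, and then to read off $\sD$ from the final category. Each of these three constructions preserves $\kappa$\+accessibility together with the existence of colimits of $\lambda$\+indexed chains, and behaves as expected on $\kappa$\+presentable objects, by the results already established (Proposition~\ref{product-proposition}, Theorem~\ref{inserter-theorem} with Remark~\ref{joint-inserter}, and Theorem~\ref{equifier-theorem} with Remark~\ref{joint-equifier}). The conceptual point is that an isomorphism $\theta$ is the same datum as a pair of mutually inverse morphisms $\theta$,~$\theta'$, and the latter can be manufactured by first adjoining both morphisms via an inserter and then forcing the two composites to be identities via an equifier.

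First I would form the Cartesian product $\sK=\sA\times\sB$. By Proposition~\ref{product-proposition} (the indexing set $\{1,2\}$ has cardinality $2<\kappa$), the category $\sK$ is $\kappa$\+accessible, it has colimits of $\lambda$\+indexed chains computed componentwise, and its $\kappa$\+presentable objects are exactly the pairs $(A,B)$ with $A\in\sA_{<\kappa}$ and $B\in\sB_{<\kappa}$. Composing the two projections with $\Theta_\sA$ and $\Theta_\sB$ gives functors $F$,~$G\colon\sK\rightrightarrows\sC$ with $F(A,B)=\Theta_\sA(A)$ and $G(A,B)=\Theta_\sB(B)$; both preserve $\kappa$\+directed colimits and colimits of $\lambda$\+indexed chains, and both carry $\kappa$\+presentable objects of $\sK$ to $\kappa$\+presentable objects of $\sC$, by the hypotheses on $\Theta_\sA$,~$\Theta_\sB$ together with Proposition~\ref{product-proposition}.

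Next I would take the joint inserter of the two pairs $(F,G)$ and $(G,F)$ as in Remark~\ref{joint-inserter}. This produces a $\kappa$\+accessible category $\sE$ (again with colimits of $\lambda$\+indexed chains) whose objects are quadruples $(A,B,\theta,\theta')$ with $\theta\colon\Theta_\sA(A)\rarrow\Theta_\sB(B)$ and $\theta'\colon\Theta_\sB(B)\rarrow\Theta_\sA(A)$, and whose $\kappa$\+presentable objects are those with $(A,B)\in\sK_{<\kappa}$. I would then cut $\sE$ down to the locus where $\theta$ and $\theta'$ are mutually inverse by a joint equifier. Let $\Phi$,~$\Psi\colon\sE\rarrow\sC$ send $(A,B,\theta,\theta')$ to $\Theta_\sA(A)$ and to $\Theta_\sB(B)$ respectively; both preserve the relevant colimits and take $\kappa$\+presentables to $\kappa$\+presentables. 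The assignments $(A,B,\theta,\theta')\mapsto\theta'\circ\theta$ and $(A,B,\theta,\theta')\mapsto\theta\circ\theta'$ define natural transformations $\Phi\Rightarrow\Phi$ and $\Psi\Rightarrow\Psi$, naturality being exactly the two compatibility identities that a morphism of $\sE$ must satisfy. Equifying each of these against the corresponding identity transformation, jointly over the index set $\{1,2\}$ via Remark~\ref{joint-equifier}, yields a $\kappa$\+accessible category $\sD'$ whose $\kappa$\+presentable objects are precisely those quadruples with $(A,B)\in\sK_{<\kappa}$, and on which $\theta'\theta=\id$ and $\theta\theta'=\id$.

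It then remains to identify $\sD'$ with $\sD$. An object of $\sD'$ is a quadruple in which $\theta$ is an isomorphism and $\theta'=\theta^{-1}$, so forgetting $\theta'$ is a bijection on objects onto $\sD$; moreover, once the compatibility identity for $\theta$ holds, the one for $\theta'$ is automatic, so $\sD'$ and $\sD$ have the same morphisms. Hence $\sD\cong\sD'$, and the description of $\kappa$\+presentable objects transports to the statement: they are the triples $(A,B,\theta)$ with $A\in\sA_{<\kappa}$ and $B\in\sB_{<\kappa}$. The argument is almost entirely formal; the only points requiring care are the verification that $\theta'\theta$ and $\theta\theta'$ are genuinely natural transformations and that the functors fed into the inserter and equifier satisfy all the standing hypotheses (preservation of $\kappa$\+directed colimits and of $\lambda$\+indexed chains, and of $\kappa$\+presentability). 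These follow from the componentwise computation of colimits in products and inserters together with the assumptions on $\Theta_\sA$ and $\Theta_\sB$, so I do not expect any real obstacle beyond this bookkeeping.
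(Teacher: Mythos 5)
Your proposal is correct and follows essentially the same route as the paper: product $\sA\times\sB$, then an inserter adjoining the two morphisms $\theta$, $\theta'$ in both directions (the paper packages the two pairs into a single inserter valued in $\sC\times\sC$, which by Remark~\ref{joint-inserter} is the same as your joint inserter), then a joint equifier forcing $\theta'\theta=\id$ and $\theta\theta'=\id$. Your explicit verification that the resulting category of quadruples is isomorphic (not merely equivalent) to the category of triples $(A,B,\theta)$ is a point the paper only touches on in a later remark, but it is the same argument.
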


\begin{proof}
 This result, going back to~\cite[Remark~3.2(I), Theorem~3.8,
Corollary~3.9, and Remark~3.11(II)]{Ulm}, appears in the recent
literature as~\cite[Proposition~3.1]{CR},
\cite[Pseudopullback Theorem~2.2]{RR}.
 So we include this proof for the sake of completeness of
the exposition and for illustrative purposes.

 The point is that the pseudopullback can be constructed as
a combination of products, inserters, and equifiers.
 Put $\sK=\sA\times\sB$ and $\sL=\sC\times\sC$, and consider
the following pair of parallel functors $F$, $G\:\sK\rarrow\sL$.
 The functor $F$ takes a pair of objects $(A,B)\in\sA\times\sB$
to the pair of objects $(\Theta_\sA(A),\Theta_\sB(B))\in\sC\times\sC$.
 The functor $G$ takes a pair of objects $(A,B)\in\sA\times\sB$ to
the pair of objects $(\Theta_\sB(B),\Theta_\sA(A))\in\sC\times\sC$.
 Then the related inserter category $\sE$ from
Section~\ref{inserter-secn} (cf.\ Remark~\ref{joint-inserter}) is
the category of quadruples $(A,B,\theta',\theta'')$, where $A\in\sA$
and $B\in\sB$ are objects, while $\theta'\:\Theta_\sA(A)\rarrow
\Theta_\sB(B)$ and $\theta''\:\Theta_\sB(B)\rarrow\Theta_\sA(A)$ are
arbitrary morphisms.

 Theorem~\ref{inserter-theorem} together with
Proposition~\ref{product-proposition} tell that the category $\sE$ is
$\kappa$\+presentable, and the $\kappa$\+presentable objects of $\sE$
are precisely all the quadruples $(A,B,\theta',\theta'')$ such that
$A$ is a $\kappa$\+presentable object of $\sA$ and $B$ is
a $\kappa$\+presentable object of~$\sB$.

 It remains to apply the joint equifier construction of
Section~\ref{equifier-secn} and Remark~\ref{joint-equifier} to
the family of two pairs of parallel natural transformations
$(\id,\>\theta'\circ\theta'')$ and $(\id,\>\theta''\circ\theta')$ of
functors $\sE\rarrow\sC$ in order to produce the full subcategory
$\sD\subset\sE$ of all quadruples $(A,B,\theta',\theta'')$ such that
$\theta'$ and~$\theta''$ are mutually inverse isomorphisms
$\Theta_\sA(A)\simeq\Theta_\sB(B)$.
 Then Theorem~\ref{equifier-theorem} tells that the category $\sD$
is $\kappa$\+accessible and describes its full subcategory of
$\kappa$\+presentable objects, as desired.
\end{proof}

\begin{rem} \label{isomorpher-remark}
 Alternatively, one can consider what we would call
the \emph{isomorpher} construction for two parallel functors between
two categories $P$, $Q\:\sH\rightrightarrows\sG$.
 (It appears in the literature under the name of
the ``iso-inserter''~\cite[Section~4]{Kel}, \cite[Section~1]{BKPS}.)
 The isomorpher category $\sD$ consists of all pairs $(H,\theta)$,
where $H\in\sH$ is an object and $\theta\:P(H)\simeq Q(H)$ is
an isomorphism in~$\sG$.

 One can observe that the pseudopullback and the isomorpher
constructions are actually equivalent,  in the sense that they can be
reduced to one another.
 Given a pair of functors $\Theta_\sA\:\sA\rarrow\sC$ and $\Theta_\sB\:
\sB\rarrow\sC$, one can put $\sH=\sA\times\sB$ and $\sG=\sC$, and
denote by $P\:\sH\rarrow\sG$ and $Q\:\sH\rarrow\sG$ the compositions
$\sA\times\sB\rarrow\sA\rarrow\sC$ and $\sA\times\sB\rarrow\sB
\rarrow\sC$.
 In this context, the two constructions of the category $\sD$ agree.

 Conversely, given a pair of parallel functors $P$, $Q\:\sH
\rightrightarrows\sG$, put $\sA=\sB=\sH$ and $\sC=\sH\times\sG$.
 Let the functor $\Theta_\sA\:\sA\rarrow\sC$ take an object $H'\in\sH$
to the pair $(H',P(H'))\in\sH\times\sG$ and the functor $\Theta_\sB\:
\sB\rarrow\sC$ take an object $H''\in\sH$ to the pair
$(H'',Q(H''))\in\sH\times\sG$.
 Then an isomorphism $\Theta_\sA(H')\simeq\Theta_\sB(H'')$ in $\sC$
means a pair of isomorphisms $H'\simeq H''$ in $\sH$ and
$P(H')\simeq Q(H'')$ in~$\sG$.
 Up to a category equivalence, the datum of two objects $H'$, $H''
\in\sH$ endowed with such two isomorphisms is the same thing as
a single object $H\in\sH$ together with an isomorphism $P(H)\simeq Q(H)$
in~$\sG$.
 Thus, in this context, the two constructions of the category $\sD$
agree as well.

 Assume that the categories $\sH$ and $\sG$ are $\kappa$\+accessible
with colimits of $\lambda$\+indexed chains (for a regular
cardinal~$\kappa$ and a smaller infinite cardinal $\lambda<\kappa$).
 Assume further that the functors $F$ and $G$ preserve
$\kappa$\+directed colimits and colimits of $\lambda$\+indexed chains,
and that they take $\kappa$\+presentable objects to
$\kappa$\+presentable objects.
 Then it follows from Proposition~\ref{product-proposition}
and Corollary~\ref{pseudopullback-corollary} that the isomorpher
category $\sD$ is $\kappa$\+accessible, and the $\kappa$\+presentable
objects of $\sD$ are precisely all the pairs $(H,\theta)$ with
$H\in\sH_{<\kappa}$.
\end{rem}

\Section{Diagram Categories} \label{diagram-secn}

 In this section, we discuss two constructions: the category of
functors $\Fun(\sC,\sK)$ and the category of $k$\+linear functors
$\Fun_k(\sA,\sK)$.
 The former one is of interest to the general category theory, while
the latter one is relevant for additive category theory, module
theory, complexes in additive categories, etc.

 Let us start with the nonadditive case.
 Given a small category $\sC$ and a category $\sK$, we denote by
$\Fun(\sC,\sK)$ the category of functors $\sC\rarrow\sK$.

 Recall that a category $\sK$ is called \emph{locally
$\kappa$\+presentable}~\cite[Definitions~1.9 and~1.17]{AR}
if $\sK$ is $\kappa$\+accessible and all colimits exist in~$\sK$.
 The following theorem is a generalization of~\cite[Theorem~1.2]{Hen}
from the case of locally $\kappa$\+presentable categories to the case
of $\kappa$\+accessible categories with colimits of $\lambda$\+indexed
chains (for some fixed infinite cardinal $\lambda<\kappa$).
 It is also a correct version of~\cite[Lemma~5.1]{Mak} (which was
shown to be erroneous in full generality in~\cite[Theorem~1.3]{Hen}).

 A category $\sC$ is said to be \emph{$\kappa$\+small} if
the cardinality of the set of all objects and morphisms in $\sC$
is smaller than~$\kappa$.

\begin{thm} \label{nonadditive-diagram-theorem}
 Let $\kappa$~be a regular cardinal and\/ $\lambda<\kappa$ be a smaller
infinite cardinal.
 Let\/ $\sC$ be a $\kappa$\+small category.
 Let\/ $\sK$ be a $\kappa$\+accessible category in which all
$\lambda$\+indexed chains (of objects and morphisms) have colimits.
 Then the category\/ $\Fun(\sC,\sK)$ is $\kappa$\+accessible.
 The full subcategory\/ $\Fun(\sC,\sK_{<\kappa})$ is precisely
the full subcategory of all $\kappa$\+presentable objects in\/
$\Fun(\sC,\sK)$.
\end{thm}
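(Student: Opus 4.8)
The plan is to realize $\Fun(\sC,\sK)$ as an iterated application of the product, the joint inserter, and the joint equifier constructions, exactly along the lines indicated in the introduction, and then to invoke Proposition~\ref{product-proposition}, Theorem~\ref{inserter-theorem} with Remark~\ref{joint-inserter}, and Theorem~\ref{equifier-theorem} with Remark~\ref{joint-equifier} in turn. First I would record the objects. Writing $\sC_0$ for the set of objects of $\sC$, whose cardinality is smaller than~$\kappa$ since $\sC$ is $\kappa$\+small, I set $\sK_0=\prod_{c\in\sC_0}\sK$. By Proposition~\ref{product-proposition} the category $\sK_0$ is $\kappa$\+accessible, it plainly has colimits of $\lambda$\+indexed chains (formed componentwise), and an object $(X_c)_c$ is $\kappa$\+presentable precisely when every $X_c$ is $\kappa$\+presentable in~$\sK$.

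Next I would insert the structure morphisms. For every morphism $f\:c\rarrow c'$ in $\sC$ let $F_f$, $G_f\:\sK_0\rarrow\sK$ be the projections onto the $c$\+th and $c'$\+th factors. Since colimits in a product are formed componentwise, each projection preserves $\kappa$\+directed colimits and colimits of $\lambda$\+indexed chains, and it carries $\kappa$\+presentable objects of $\sK_0$ to $\kappa$\+presentable objects of $\sK$ by the description just recalled. As $\sC$ is $\kappa$\+small, the family of all its morphisms is indexed by a set of cardinality smaller than~$\kappa$, so the joint inserter of Remark~\ref{joint-inserter} applies: the resulting category $\sE_1$ is the category of all assignments $c\mapsto X_c\in\sK$ equipped with an arbitrary morphism $X_f\:X_c\rarrow X_{c'}$ for each $f\:c\rarrow c'$ (with no compatibility imposed), and it is $\kappa$\+accessible with $\kappa$\+presentable objects exactly those whose underlying family $(X_c)_c$ lies in $(\sK_0)_{<\kappa}$, that is, has all $X_c$ $\kappa$\+presentable.

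Finally I would cut out the functors by a joint equifier. For each object $c$, the assignment $(X,(X_f)_f)\mapsto X_{\id_c}$ together with the identity define two natural transformations $\mathrm{ev}_c\rightrightarrows\mathrm{ev}_c$ between evaluation functors $\sE_1\rarrow\sK$; and for each composable pair $c\overset f\rarrow c'\overset g\rarrow c''$ the assignments $X_{g\circ f}$ and $X_g\circ X_f$ define two natural transformations $\mathrm{ev}_c\rightrightarrows\mathrm{ev}_{c''}$. Equifying all of these simultaneously imposes precisely the identity and composition laws, so the joint equifier is $\Fun(\sC,\sK)$. The evaluation functors are projections composed with the forgetful functor $\sE_1\rarrow\sK_0$, hence preserve the relevant colimits and carry $\kappa$\+presentables to $\kappa$\+presentables, while $\sE_1$ is $\kappa$\+accessible with colimits of $\lambda$\+indexed chains by the previous step; so Remark~\ref{joint-equifier} applies and shows that $\Fun(\sC,\sK)$ is $\kappa$\+accessible with $\kappa$\+presentable objects exactly those that are $\kappa$\+presentable in~$\sE_1$. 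Combining the two descriptions, the $\kappa$\+presentable objects of $\Fun(\sC,\sK)$ are exactly the functors with values in $\sK_{<\kappa}$, i.e.\ the objects of $\Fun(\sC,\sK_{<\kappa})$.

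Since the conceptual work is already carried by the inserter and equifier theorems, the only points I expect to require genuine attention are the bookkeeping ones, and I anticipate the cardinality count to be the main thing to get right. I must check that the index sets of the joint inserter and the joint equifier have cardinality smaller than~$\kappa$: for the inserter this is the number of morphisms of $\sC$, which is smaller than~$\kappa$; for the equifier it is $|\sC_0|$ plus the number of composable pairs, bounded by $|\sC_0|+|{\operatorname{Mor}}\,\sC|^2$, which is again smaller than~$\kappa$ because $\mu^2=\mu$ for infinite $\mu<\kappa$ and $\kappa$ is regular. I also need the naturality of the four transformations above, which is immediate because a morphism of $\sE_1$ is by definition a family of morphisms in $\sK$ commuting with every inserted structure map $X_f$, together with the fact that colimits in $\sE_1$ are created by the forgetful functor to $\sK_0$, ensuring that the evaluation functors meet the hypotheses of Theorem~\ref{equifier-theorem}.
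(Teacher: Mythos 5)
Your proposal is correct and follows essentially the same route as the paper: product over objects, then the (joint) inserter over morphisms to get the category of ``nonmultiplicative functors'', then a joint equifier imposing the identity and composition laws, with the same cardinality and presentability bookkeeping. The only cosmetic difference is that you invoke the joint inserter of Remark~\ref{joint-inserter} directly, while the paper phrases the same step as a single inserter into the product $\prod_{(c\to d)\in\sC}\sK$ --- which that remark identifies as the identical construction.
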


\begin{proof}
 Similarly to the proof Corollary~\ref{pseudopullback-corollary},
the point is that the diagram category can be constructed as
a combination of products, inserters, and equifiers.
 Let $\sK'=\prod_{c\in\sC}\sK$ be the Cartesian product of copies of
the category $\sK$ indexed by the objects of the category~$\sC$,
and let $\sL'=\prod_{(c\to d)\in\sC}\sK$ be the similar product of
copies of $\sK$ indexed by the morphisms of the category~$\sC$.
 Proposition~\ref{product-proposition} tells that the categories
$\sK'$ and $\sL'$ are $\kappa$\+accessible, and describes their full
subcategories of $\kappa$\+presentable objects.

 Define a pair of parallel functors $F$, $G\:\sK'\rarrow\sL'$ as
follows.
 The functor $F$ assigns to a collection of objects
$(K_c\in\sK)_{c\in\sC}\in\sK'$ the collection of objects
$(L_{c\to d})_{(c\to d)\in\sC}\in\sL'$ given by the rules
$L_{c\to d}=K_c$ for any morphism $c\rarrow d$ in~$\sC$.
 Similarly, the functor $G$ assigns to a collection of objects
$(K_c\in\sK)_{c\in\sC}\in\sK'$ the collection of objects
$(L_{c\to d})_{(c\to d)\in\sC}\in\sL'$ given by the rules
$L_{c\to d}=K_d$ for any morphism $c\rarrow d$ in~$\sC$.

 Then the related inserter category $\sE$ from
Section~\ref{inserter-secn} (cf.\ Remark~\ref{joint-inserter})
is the category of all ``nonmultiplicative functors''
$\sC\rarrow\sK$.
 An object $E\in\sE$ is a rule assigning to every object $c\in\sC$
an object $E_c\in\sK$ and to every morphism $c\rarrow d$ in $\sC$
a morphism $E_c\rarrow E_d$ in~$\sK$.
 The conditions of compatibility with the compositions of morphisms
and with the identity morphisms are \emph{not} imposed.
 Morphisms of ``nonmultiplicative functors'' (i.~e., the morphisms
in~$\sE$) are similar to the usual morphisms of functors; so
the desired functor category $\Fun(\sC,\sK)$ is a full subcategory
in~$\sE$.

 Theorem~\ref{inserter-theorem} tells that the category $\sE$ is
$\kappa$\+accessible, and describes its full subcategory of
$\kappa$\+presentable objects.
 Now the desired full subcategory $\Fun(\sC,\sK)\subset\sE$ can be
produced as a joint equifier category, as in
Section~\ref{equifier-secn} and Remark~\ref{joint-equifier}.
 There are two kinds of pairs of parallel natural transformations
to be equified.

 Firstly, for every composable pair of morphisms $b\rarrow c\rarrow d$
in $\sC$, we have a pair of parallel functors $F_{b\to c\to d}$,
$G_{b\to c\to d}\:\sE\rightrightarrows\sK$ and a pair of parallel
natural transformations $\phi_{b\to c\to d}$, $\psi_{b\to c\to d}\:
F_{b\to c\to d}\rightrightarrows G_{b\to c\to d}$.
 The functor $F_{b\to c\to d}$ takes an object $E\in\sE$ to
the object $E_b\in\sK$, and the functor $G_{b\to c\to d}$ takes
an object $E\in\sE$ to the object $E_d\in\sK$.
 The natural transformation $\phi_{b\to c\to d}$ acts by the composition
of the morphisms $E_b\rarrow E_c\rarrow E_d$ in $\sK$ assigned to
the morphisms $b\rarrow c$ and $c\rarrow d$ by the datum of
the object~$E$.
 The natural transformation $\psi_{b\to c\to d}$ acts by the morphism
$E_b\rarrow E_d$ assigned to the composition of the morphisms $b\rarrow
c\rarrow d$ in $\sC$ by the datum of the object~$E$.

 Secondly, for every object $c\in\sC$, we have a pair of parallel
functors $F_c=G_c\:\sE\rarrow\sK$ and a pair of parallel natural
transformations $\phi_c$, $\psi_c\:F_c\rightrightarrows G_c$.
 The functor $F_c=G_c$ takes an object $E\in\sE$ to
the object $E_c\in\sK$.
 The natural transformation~$\phi_c$ acts by the morphism
$E_c\rarrow E_c$ in $\sK$ assigned to the identity morphism~$\id_c$
in $\sC$ by the datum of the object~$E$; while~$\psi_c$ is
the identity natural transformation.

 The resulting joint equifier is the functor category $\Fun(\sC,\sK)$.
 Theorem~\ref{equifier-theorem} tells that this category is
$\kappa$\+accessible, and provides the desired description of its
full subcategory of $\kappa$\+presentable objects.
\end{proof}

 Now let $k$~be a commutative ring.
 A \emph{$k$\+linear category} $\sA$ is a category enriched in
$k$\+modules.
 This means that, for any two objects $a$ and $b\in\sA$, the set of
morphisms $\Hom_\sA(a,b)$ is a $k$\+module, and the composition
maps $\Hom_\sA(b,c)\times\Hom_\sA(a,b)\rarrow\Hom_\sA(a,c)$ are
$k$\+bilinear.

 Suppose given a set of objects~$a$ and, for every pair of objects
$a$, $b$, a \emph{generating set} of morphisms $\Gen(a,b)$.
 Then one can construct the $k$\+linear category $\sB$ on the given
set of objects \emph{freely generated} by the given generating sets
of morphisms.
 For every pair of objects $a$, $b$, the free $k$\+module
$\Hom_\sB(a,b)$ has a basis consisting of all the formal compositions
$g_n\dotsm g_1$, \,$n\ge0$, where $g_i\in\Gen(c_i,c_{i+1})$,
\ $c_1=a$, \,$c_{n+1}=b$.

 Furthermore, suppose given a \emph{set of defining relations}
$\Rel(a,b)\subset\Hom_\sB(a,b)$ for every pair of objects $a$,~$b$.
 Then one can construct the two-sided ideal of morphisms $\sJ\subset
\sB$ generated by all the relations, and pass to the $k$\+linear
quotient category $\sA=\sB/\sJ$ by the ideal~$\sJ$.

 Abusing terminology, we will say that a $k$\+linear category $\sA$
is \emph{$\kappa$\+presented} if it has the form $\sA=\sB/\sJ$ as
per the construction above, where the set of objects~$\{a\}$, the set
of all generators $\coprod_{a,b}\Gen(a,b)$, and the set of all
relations $\coprod_{a,b}\Rel(a,b)$ all have the cardinalities
smaller than~$\kappa$.
 In another terminology, one could say that $\sA$ is ``the path
category of a $\kappa$\+small quiver with a $\kappa$\+small set of
relations''.

 A $k$\+linear category $\sK$ is said to be \emph{$\kappa$\+accessible}
if it is $\kappa$\+accessible as an abstract category.
 Given a small $k$\+linear category $\sA$ and a $k$\+linear category
$\sK$, we denote by $\Fun_k(\sA,\sK)$ the ($k$\+linear) category of
$k$\+linear functors $\sA\rarrow\sK$.
 The following theorem is a $k$\+linear version of
Theorem~\ref{nonadditive-diagram-theorem}.

\begin{thm} \label{k-linear-diagram-theorem}
 Let $\kappa$~be a regular cardinal and $\lambda<\kappa$ be a smaller
infinite cardinal.
 Let $k$~be a commutative ring, let\/ $\sA$ be a $\kappa$\+presented
$k$\+linear category, and let\/ $\sK$ be a $\kappa$\+accessible
$k$\+linear category in which all $\lambda$\+indexed chains have
colimits.
 Then the category\/ $\Fun_k(\sA,\sK)$ is $\kappa$\+accessible.
 The full subcategory\/ $\Fun_k(\sA,\sK_{<\kappa})$ is precisely
the full subcategory of all $\kappa$\+presentable objects in\/
$\Fun_k(\sA,\sK)$.
\end{thm}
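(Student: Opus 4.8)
The plan is to adapt the proof of Theorem~\ref{nonadditive-diagram-theorem} to the $k$\+linear setting, building $\Fun_k(\sA,\sK)$ out of a product, a joint inserter, and a joint equifier. Write $\sA=\sB/\sJ$ as in the construction preceding the theorem, with a $\kappa$\+small set of objects, a $\kappa$\+small set of generators $\coprod_{a,b}\Gen(a,b)$, and a $\kappa$\+small set of relations $\coprod_{a,b}\Rel(a,b)$. The key observation is that since $\sB$ is the \emph{free} $k$\+linear category on the objects and generators, a $k$\+linear functor $\sB\rarrow\sK$ is precisely a choice of object $E_a\in\sK$ for each object $a$ together with an arbitrary morphism $E_g\:E_{s(g)}\rarrow E_{t(g)}$ in $\sK$ for each generator $g$ (with source $s(g)$ and target $t(g)$), subject to no constraints; and a $k$\+linear functor $\sA\rarrow\sK$ is such a datum that in addition sends every relation to zero.

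First I would set $\sK'=\prod_a\sK$ (product over the objects of $\sA$) and $\sL'=\prod_g\sK$ (product over the generators), both $\kappa$\+accessible by Proposition~\ref{product-proposition} since the index sets are $\kappa$\+small. I would then define $F$, $G\:\sK'\rightrightarrows\sL'$ so that the $g$\+component of $F$ is the projection onto the source object $E_{s(g)}$ and the $g$\+component of $G$ is the projection onto the target object $E_{t(g)}$. These are coordinate functors, hence preserve all colimits and, by the description of $\kappa$\+presentable objects in a product in Proposition~\ref{product-proposition}, send $\kappa$\+presentable objects to $\kappa$\+presentable objects. By Theorem~\ref{inserter-theorem} and Remark~\ref{joint-inserter}, the associated joint inserter $\sE$ is then $\kappa$\+accessible, has colimits of $\lambda$\+indexed chains, and its $\kappa$\+presentable objects are exactly the data $(E_a,E_g)$ with all $E_a$ being $\kappa$\+presentable in $\sK$. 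By the free-generation observation above, $\sE\simeq\Fun_k(\sB,\sK)$.

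Next I would cut out $\Fun_k(\sA,\sK)$ inside $\sE$ as a joint equifier indexed by the $\kappa$\+small set of relations. For a relation $r\in\Rel(a,b)\subset\Hom_\sB(a,b)$, evaluation at an object $E=(E_a,E_g)\in\sE$ yields a morphism $r_E\:E_a\rarrow E_b$ in $\sK$, obtained by substituting $E_g$ for each generator~$g$ and forming the corresponding $k$\+linear combination of composites. This gives a natural transformation $\phi_r\:\hat F_r\Rightarrow\hat G_r$ between the evaluation functors $\hat F_r$, $\hat G_r\:\sE\rightrightarrows\sK$, \ $E\mapsto E_a$ and $E\mapsto E_b$; naturality follows by sliding a morphism of $\sE$ through each composite one generator at a time and using $k$\+linearity. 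Here the $k$\+linear hypotheses enter decisively: I would take the second natural transformation $\psi_r$ to be the \emph{zero} transformation, so that the equifier condition $\phi_{r,E}=\psi_{r,E}$ reads precisely $r_E=0$, i.e., $E$ respects~$r$. Each $\hat F_r$, $\hat G_r$ factors as the forgetful functor $\sE\rarrow\sK'$ followed by a projection, so it preserves $\kappa$\+directed colimits and colimits of $\lambda$\+indexed chains and sends $\kappa$\+presentable objects to $\kappa$\+presentable objects, exactly as required by Theorem~\ref{equifier-theorem}.

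Finally, applying Theorem~\ref{equifier-theorem} and Remark~\ref{joint-equifier} to this family, the joint equifier---which is exactly the full subcategory $\Fun_k(\sA,\sK)=\Fun_k(\sB/\sJ,\sK)\subset\sE$---is $\kappa$\+accessible, and its $\kappa$\+presentable objects are those that are already $\kappa$\+presentable in $\sE$. Combined with the inserter description, these are precisely the $k$\+linear functors all of whose values lie in $\sK_{<\kappa}$, that is, the objects of $\Fun_k(\sA,\sK_{<\kappa})$. The main point to check carefully, and the only place where the $k$\+linear structure does genuine work, is this passage from relations to equifier conditions: one must verify that relation-evaluation is truly natural in $E$, and that encoding ``$r_E=0$'' as comparison with the zero morphism is legitimate (which it is, precisely because $\sK$ is $k$\+linear). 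The remaining verifications---preservation of colimits and of $\kappa$\+presentability by the structure functors, and $\kappa$\+smallness of the index sets---are routine, mirroring the corresponding steps in the proof of Theorem~\ref{nonadditive-diagram-theorem}.
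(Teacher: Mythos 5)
Your proposal is correct and follows essentially the same route as the paper's own proof: product over objects and generators, joint inserter realizing $\Fun_k(\sB,\sK)$ for the free category $\sB$, and then a joint equifier indexed by the relations with $\phi_r$ given by evaluation of $r$ and $\psi_r$ the zero transformation, which is exactly where the $k$\+linearity is used. The only difference is that you spell out the naturality of relation-evaluation a bit more explicitly, which the paper leaves implicit.
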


\begin{proof}
 The argument is similar to the proof of
Theorem~\ref{nonadditive-diagram-theorem}, with the only difference
that one works with the generating morphisms and defining relations
in $\sA$ instead of all morphisms and all compositions in~$\sC$.
 Let $\sK'=\prod_{a\in\sA}\sK$ be the Cartesian product of copies
of the category $\sK$ indexed by the objects of the category $\sA$,
and let $\sL'=\prod_{a,b\in\sA}\prod_{(a\to b)\in\Gen(a,b)}\sK$
be the similar product of copies of $\sK$ indexed by the set of
generating morphisms $\coprod_{a,b}\Gen(a,b)$.
 Proposition~\ref{product-proposition} tells that the categories
$\sK'$ and $\sL'$ are $\kappa$\+accessible, and describes their full
subcategories of $\kappa$\+presentable objects.
 
 Define a pair of parallel functors $F$, $G\:\sK'\rarrow\sL'$ as
follows.
 The functor $F$ assigns to a collection of objects
$(K_a\in\sK)_{a\in\sA}\in\sK'$ the collection of objects
$(L_{a\to b})_{(a\to b)\in\Gen(a,b),\,a,b\in\sA}\in\sL'$ given by
the rules $L_{a\to b}=K_a$ for any generating morphism
$(a\to b)\in\Gen(a,b)$.
 Similarly, the functor $G$ assigns to a collection of objects
$(K_a\in\sK)_{a\in\sA}\in\sK'$ the collection of objects
$(L_{a\to b})_{(a\to b)\in\Gen(a,b),\,a,b\in\sA}\in\sL'$ given by
the rules $L_{a\to b}=K_b$ for any generating morphism
$(a\to b)\in\Gen(a,b)$.

 Then the related inserter category $\sE$ from
Section~\ref{inserter-secn} (cf.\ Remark~\ref{joint-inserter}) is
naturally equivalent to the category $\Fun_k(\sB,\sK)$, where $\sB$
is the ``path category of the quiver without relations'' constructed
in the discussion preceding the formulation of the theorem.
 Theorem~\ref{inserter-theorem} tells that the category $\sE$ is
$\kappa$\+accessible, and defines its full subcategory of
$\kappa$\+presentable objects.
 The category $\Fun_k(\sA,\sK)$ we are interested in is a full
subcategory in $\sE=\Fun_k(\sB,\sK)$ consisting of all the ``quiver
representations in $\sK$ for which the relations are satisfied''.
 The full subcategory $\Fun_k(\sA,\sK)\subset\Fun_k(\sB,\sK)$ can be
produced as a joint equifier category, as in
Section~\ref{equifier-secn} and Remark~\ref{joint-equifier}.

 The pairs of parallel natural transformations to be equified are
indexed by elements of the set of defining relations
$\coprod_{a,b}\Rel(a,b)$.
 Given a defining relation $r\in\Rel(a,b)$, we have a pair of
parallel functors $F_r$, $G_r\:\sE\rightrightarrows\sK$ and a pair of
natural transformations $\phi_r$, $\psi_r\:F_r\rightrightarrows G_r$.
 The functor $F_r\:\Fun_k(\sB,\sK)\rarrow\sK$ takes a functor
$E\:\sB\rarrow\sK$ to the object $E(a)\in\sK$, and the functor
$G_r$ takes the functor $E$ to the object $E(b)\in\sK$.
 The natural transformation~$\phi_r$ acts by the morphism
$E(r)\:E(a)\rarrow E(b)$.
 The natural transformation~$\psi_r$ acts by the zero morphism
$0\:E(a)\rarrow E(b)$ in the $k$\+linear category~$\sK$.

 The resulting joint equifier is the category of $k$\+linear functors
$\Fun_k(\sA,\sK)$.
 Theorem~\ref{equifier-theorem} tells that this category is
$\kappa$\+accessible, and provides the desired description of its
full subcategory of $\kappa$\+presentable objects.
\end{proof}

\Section{Brief Preliminaries on $2$-Categories}
\label{2cat-prelim-secn}

 The aim of this section is to provide a very brief and mostly
terminological preliminary discussion for the purposes of the next
two Sections~\ref{conical-secn}\+-\ref{weighted-secn}.
 The reader can find the details by following the references.

 Throughout the three sections, for the most part we adopt the policy
of benign neglect with respect to set-theoretical issues of size
(i.~e., the distinction between sets and classes).
 When specific restrictions on the size matter, we mention them.

 In the terminology of higher category theory, the prefix ``$2$\+''
means strict concepts, while the prefix ``bi\+'' refers to relaxed ones.
 So $2$\+categories are strict, while bicategories are
relaxed~\cite{Ben}.

 A \emph{$2$\+category} is a category enriched in the category of
categories $\Cat$ (with the monoidal structure on $\Cat$ given by
the Cartesian product)~\cite{KS}.
 In particular, there is the important \emph{$2$\+category of 
categories}~$\TwoCat$: categories are the objects, functors are
the $1$\+cells, natural transformations are the $2$\+cells.

 In the terminology of the bicategory theory, one speaks of
\emph{morphisms of bicategories} (which are multiplicative and unital
on $1$\+cells up to coherent families of $2$\+cells) or 
\emph{homomorphisms of bicategories} (which are multiplicative and
unital on $1$\+cells up to coherent families of \emph{invertible}
$2$\+cells)~\cite[Section~4]{Ben}.
 Even when one is only interested in $2$\+categories, the notion of
a $2$\+functor may be too strict, and one may want to relax it by
considering morphisms of $2$\+categories (known as \emph{lax functors}),
or homomorphisms of $2$\+categories (known as \emph{pseudofunctors}).

 Let $\Gamma$ and $\Delta$ be two $2$\+categories.
 Then $2$\+functors $\Gamma\rarrow\Delta$ form a $2$\+category
$[\Gamma,\Delta]$.
 The objects of $[\Gamma,\Delta]$ are the $2$\+functors $\Gamma\rarrow
\Delta$, the $1$\+cells of $[\Gamma,\Delta]$ are the $2$\+natural
transformations, and the $2$\+cells of $[\Gamma,\Delta]$ are called
\emph{modifications}~\cite[Section~1.4]{KS}.
 A $2$\+functor $\Gamma\rarrow\Delta$ is a rule assigning to every
object of $\Gamma$ an object of $\Delta$, to every $1$\+cell of
$\Gamma$ a $1$\+cell of $\Delta$, and to every $2$\+cell of $\Gamma$
a $2$\+cell of~$\Delta$.
 A $2$\+natural transformation is a rule assiging to every object
of $\Gamma$ a $1$\+cell in~$\Delta$.
 A modification is a rule assigning to every object of $\Gamma$
a $2$\+cell in~$\Delta$.
 $2$\+categories and $2$\+functors form the \emph{$3$\+category of\/
$2$\+categories}: $2$\+categories are the objects, $2$\+functors are
the $1$\+cells, $2$\+natural transformations are the $2$\+cells,
and modifications are the $3$\+cells.

 Even when one is only interested in $2$\+functors rather than
the more relaxed concepts of lax functors or pseudofunctors, the notion
of a $2$\+natural transformation may be too strict, and one may want
to relax it.
 Then one can consider \emph{lax natural transformations} (compatible
with the action of the $2$\+functors on $1$\+cells in $\Gamma$ up
to a coherent family of $2$\+cells in~$\Delta$) or \emph{pseudonatural
transformations} (compatible with the action of the $2$\+functors on
the $1$\+cells in $\Gamma$ up to a coherent family of \emph{invertible}
$2$\+cells in~$\Delta$).
 In the terminology of~\cite[\S4.1]{MP}, lax natural
transformations are called ``transformations'', pseudonatural 
transformations are called ``strong transformations'', and
$2$\+natural transformations are called ``strict transformations''.
 The $2$\+category of $2$\+functors $\Gamma\rarrow\Delta$,
pseudonatural transformations, and modifications is denoted by
$\Psd[\Gamma,\Delta]$ in~\cite{Bir}, \cite[Section~5]{Kel},
\cite[Section~2]{BKPS}.

 In connection with the ``lax'' notions, the choice of the direction
of the (possibly noninvertible) $2$\+cells providing the relaxed
compatibility becomes important.
 When the direction is reversed, the correspoding notions are called
``oplax''.
 For ``pseudo'' notions, the compatibility $2$\+cells are assumed to be
invertible, and so the choice of the direction in which they act
no longer matters.

\Section{Conical Pseudolimits, Lax Limits, and Oplax Limits}
\label{conical-secn}

 We denote by $\TwoCat$ the $2$\+category of small categories and
by $\TwoCAT$ the $2$\+category of locally small categories (i.~e.,
large categories in which morphisms between any fixed pair of objects
form a set).
 So the categories of morphisms in $\TwoCAT$ need not be even locally
small; this will present no problem for our constructions.

 Let $\Gamma$ be a small $2$\+category and
$H\:\Gamma\rarrow\TwoCAT$ be a $2$\+functor.
 The (\emph{conical}) \emph{lax limit} of $H$ is a category $\sL$
whose objects are the following sets of data:
\begin{enumerate}
\renewcommand{\theenumi}{\roman{enumi}}
\item for every object $\gamma\in\Gamma$, an object $L_\gamma\in
H(\gamma)$ of the category $H(\gamma)$ is given;
\item for every $1$\+cell $a\:\gamma\rarrow\delta$ in $\Gamma$,
a morphism $l_a\:H(a)(L_\gamma)\rarrow L_\delta$ in the category
$H(\delta)$ is given.
\end{enumerate}
 Here $H(a)\:H(\gamma)\rarrow H(\delta)$ is the functor assigned to
the $1$\+cell $a\:\gamma\rarrow\delta$ by the $2$\+functor~$H$.

 The set of data~(i\+-ii) must satisfy the following conditions:
\begin{enumerate}
\renewcommand{\theenumi}{\roman{enumi}}
\setcounter{enumi}{2}
\item for every identity $1$\+cell $a=\id_\gamma\:\gamma\rarrow\gamma$
in $\Gamma$, one has $l_{\id_\gamma}=\id_{L_\gamma}\:L_\gamma\rarrow
L_\gamma$;
\item for every composable pair of $1$\+cells $a\:\gamma\rarrow\delta$
and $b\:\delta\rarrow\epsilon$ in $\Gamma$, one has
$l_{ba}=l_b\circ H(b)(l_a)$ in the category $H(\epsilon)$;
\item for every $2$\+cell $t\:a\rarrow b$, where $a$, $b\:\gamma
\rightrightarrows\delta$ is a pair of parallel $1$\+cells in $\Gamma$, 
the triangular diagram
$$
 \xymatrix{
  H(a)(L_\gamma) \ar[dd]_{H(t)_{L_\gamma}} \ar[rd]^-{l_a} \\
  & L_\delta \\
  H(b)(L_\gamma) \ar[ru]_-{l_b}
 }
$$
is commutative in the category~$H(\delta)$.
\end{enumerate}
 Here $H(t)\:H(a)\rarrow H(b)$ is the morphism of functors from
the category $H(\gamma)$ to the category $H(\delta)$ assigned
to the $2$\+cell $t\:a\rarrow b$ by the $2$\+functor~$H$.

 A morphism $L\rarrow M$ in the category $\sL$ is the datum of
a morphism $L_\gamma\rarrow M_\gamma$ in the category $H(\gamma)$ for
every object $\gamma\in\Gamma$, satisfying the obvious compatibility
condition with the data~(ii) for the objects $L$ and~$M$.

 The (\emph{conical}) \emph{oplax limit} of the $2$\+functor $H$ is
the category $\sM$ whose objects are the following sets of data:
\begin{enumerate}
\renewcommand{\theenumi}{\roman{enumi}${}^*$}
\item for every object $\gamma\in\Gamma$, an object $M_\gamma\in
H(\gamma)$ of the category $H(\gamma)$ is given;
\item for every $1$\+cell $a\:\gamma\rarrow\delta$ in $\Gamma$,
a morphism $m_a\:M_\delta\rarrow H(a)(M_\gamma)$ in the category
$H(\delta)$ is given.
\end{enumerate}

 The set of data~(i${}^*$\+-ii${}^*$) must satisfy the following
 conditions:
\begin{enumerate}
\renewcommand{\theenumi}{\roman{enumi}${}^*$}
\setcounter{enumi}{2}
\item for every identity $1$\+cell $a=\id_\gamma\:\gamma\rarrow\gamma$
in $\Gamma$, one has $m_{\id_\gamma}=\id_{M_\gamma}\:
M_\gamma\rarrow M_\gamma$;
\item for every composable pair of $1$\+cells $a\:\gamma\rarrow\delta$
and $b\:\delta\rarrow\epsilon$ in $\Gamma$, one has
$m_{ba}=H(b)(m_a)\circ m_b$ in the category $H(\epsilon)$;
\item for every $2$\+cell $t\:a\rarrow b$, where $a$, $b\:\gamma
\rightrightarrows\delta$ is a pair of parallel $1$\+cells in $\Gamma$, 
the triangular diagram
$$
 \xymatrix{
  & H(a)(M_\gamma) \ar[dd]^{H(t)_{M_\gamma}}  \\
  M_\delta \ar[ru]^-{m_a} \ar[rd]_-{m_b} \\
  & H(b)(M_\gamma) 
 }
$$
is commutative in the category~$H(\delta)$.
\end{enumerate}

 A morphism $L\rarrow M$ in the category $\sM$ is the datum of
a morphism $L_\gamma\rarrow M_\gamma$ in the category $H(\gamma)$ for
every object $\gamma\in\Gamma$, satisfying the obvious compatibility
condition with the data~(ii${}^*$) for the objects $L$ and~$M$.

 The \emph{pseudolimit} of the $2$\+functor $H$ is the full subcategory
$\sE\subset\sL$ consisting of all the objects $E\in\sL$ such that
the morphism $e_a\:H(a)(E_\gamma)\rarrow E_\delta$ in~(ii) is
an isomorphism in $H(\delta)$ for every $1$\+cell
$a\:\gamma\rarrow\delta$ in~$\Gamma$.
 Equivalently, the pseudolimit $\sE$ can be defined as the full
subcategory $\sE\subset\sM$ consisting of all the objects $E\in\sM$
such that the morphism $e_a\:E_\delta\rarrow H(a)(E_\gamma)$
in~(ii${}^*$) is an isomorphism in $H(\delta)$ for every $1$\+cell
$a\:\gamma\rarrow\delta$ in~$\Gamma$.

 Let $\kappa$~be a regular cardinal and $\lambda<\kappa$ be a smaller
infinite cardinal.
 Denote by $\ACC_{\lambda,\kappa}\subset\TwoCAT$ the following
$2$\+subcategory in $\TwoCAT$.
 The objects of $\ACC_{\lambda,\kappa}$ are all
the $\kappa$\+accessible categories with colimits of
$\lambda$\+indexed chains.
 The $1$\+cells of $\ACC_{\lambda,\kappa}$ are the functors
preserving $\kappa$\+directed colimits and colimits of
$\lambda$\+indexed chains.
 The $2$\+cells of $\ACC_{\lambda,\kappa}$ are the (arbitrary)
natural transformations.

 As usual, we will say that a $2$\+category is \emph{$\kappa$\+small}
if it has less than $\kappa$~objects, less than~$\kappa$\,
$1$\+cells, and less than~$\kappa$\, $2$\+cells.

\begin{thm} \label{conical-oplax-limit-theorem}
 Let $\kappa$~be a regular cardinal and $\lambda<\kappa$ be a smaller
infinite cardinal.
 Let $\Gamma$ be a $\kappa$\+small\/ $2$\+category and
$H\:\Gamma\rarrow\ACC_{\lambda,\kappa}$ be a\/ $2$\+functor.
 Then the oplax limit\/ $\sM$ of the\/ $2$\+functor $H$ (computed in\/
$\TwoCAT$, as per the construction above) belongs to\/
$\ACC_{\lambda,\kappa}$.
 For every object $\gamma\in\Gamma$, the natural forgetful/projection
functor $\sM\rarrow H(\gamma)$ belongs to\/ $\ACC_{\lambda,\kappa}$.
 An object $S\in\sM$ is $\kappa$\+presentable if and only if,
for every object $\gamma\in\Gamma$, the image $S_\gamma$ of $S$ in
$H(\gamma)$ is $\kappa$\+presentable.
\end{thm}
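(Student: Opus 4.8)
The plan is to realize the oplax limit $\sM$ as a combination of a product, a (joint) inserter, and a (joint) equifier, exactly as in the proofs of Corollary~\ref{pseudopullback-corollary} and Theorem~\ref{nonadditive-diagram-theorem}, and then to read off both the $\kappa$\+accessibility of $\sM$ and the description of its $\kappa$\+presentable objects from Proposition~\ref{product-proposition}, Theorem~\ref{inserter-theorem}, and Theorem~\ref{equifier-theorem}. First I would form the product $\sK=\prod_{\gamma\in\Gamma}H(\gamma)$, indexed by the (fewer than~$\kappa$) objects of $\Gamma$; by Proposition~\ref{product-proposition} this lies in $\ACC_{\lambda,\kappa}$, and its $\kappa$\+presentable objects are the tuples all of whose components are $\kappa$\+presentable. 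An object of $\sK$ is precisely a choice of data~(i${}^*$).

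To add the data~(ii${}^*$), I would set up a joint inserter (Remark~\ref{joint-inserter}) indexed by the set of $1$\+cells of $\Gamma$, again of cardinality $<\kappa$: for a $1$\+cell $a\:\gamma\rarrow\delta$ put $\sL_a=H(\delta)$ and define $F_a$, $G_a\:\sK\rightrightarrows\sL_a$ by $F_a((M_\gamma)_\gamma)=M_\delta$ and $G_a((M_\gamma)_\gamma)=H(a)(M_\gamma)$. Here it is essential that the oplax direction makes the \emph{source} functor $F_a$ a plain coordinate projection, so that $F_a$ takes $\kappa$\+presentable objects to $\kappa$\+presentable objects, while the target functor $G_a$, which involves $H(a)$, need only preserve $\kappa$\+directed colimits and colimits of $\lambda$\+indexed chains --- which it does, being a $1$\+cell of $\ACC_{\lambda,\kappa}$. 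An inserter morphism $\phi\:F\rarrow G$ is then exactly a family $m_a\:M_\delta\rarrow H(a)(M_\gamma)$, so the resulting inserter category $\sE$ is the category of ``oplax cones without the coherence conditions''. Theorem~\ref{inserter-theorem} gives $\sE\in\ACC_{\lambda,\kappa}$ and identifies its $\kappa$\+presentable objects as the pairs whose underlying $\sK$\+tuple is $\kappa$\+presentable.

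It remains to impose the coherence conditions (iii${}^*$)--(v${}^*$) by a joint equifier (Remark~\ref{joint-equifier}). Each condition is an equation between two morphisms with common source and target in some $H(\cdot)$, and the tautological structure maps $m_a$ of $\sE$ are natural in $E\in\sE$ (by the definition of morphisms in the inserter), so each side is a genuine natural transformation. Concretely: for (iii${}^*$) I would equify $m_{\id_\gamma}$ with $\id$ as endomorphisms of the evaluation functor $E\mapsto E_\gamma$; for (iv${}^*$), attached to a composable pair $a$,~$b$, I would equify $m_{ba}$ with $H(b)(m_a)\circ m_b$ as transformations $E_\epsilon\rarrow H(b)H(a)(E_\gamma)$ (using strictness of $H$ to identify $H(ba)=H(b)H(a)$); and for (v${}^*$), attached to a $2$\+cell $t\:a\rarrow b$, I would equify $m_b$ with $H(t)_{E_\gamma}\circ m_a$ as transformations $E_\delta\rarrow H(b)(E_\gamma)$. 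In every case the source functor is an evaluation/projection (taking $\kappa$\+presentable objects to $\kappa$\+presentable objects) and the target functor is a projection post-composed with some $H(a)$'s (preserving the required colimits), so the hypotheses of Theorem~\ref{equifier-theorem} hold. These families are indexed by identity $1$\+cells, composable pairs of $1$\+cells, and $2$\+cells of $\Gamma$, all of cardinality $<\kappa$ since $\Gamma$ is $\kappa$\+small and $\kappa$ is regular. The joint equifier is precisely $\sM$, so Theorem~\ref{equifier-theorem} yields $\sM\in\ACC_{\lambda,\kappa}$ with $\kappa$\+presentable objects those that are $\kappa$\+presentable in~$\sE$. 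Chaining the three descriptions shows that $S\in\sM$ is $\kappa$\+presentable if and only if every component $S_\gamma$ is $\kappa$\+presentable in $H(\gamma)$; and the forgetful functor $\sM\rarrow\sK$, hence each projection $\sM\rarrow H(\gamma)$, preserves $\kappa$\+directed colimits and colimits of $\lambda$\+indexed chains, so lies in $\ACC_{\lambda,\kappa}$.

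The main obstacle I anticipate is the bookkeeping that guarantees the hypotheses of the inserter and equifier theorems --- above all, that in the oplax setting the structure morphism $m_a$ has the \emph{projection} $M_\delta$ as its source, so that the functor required to preserve $\kappa$\+presentability is a coordinate projection rather than one of the (colimit- but not presentability-preserving) functors $H(a)$. This asymmetry is exactly what places the oplax limit (as opposed to the lax limit) within the scope of the present techniques. The remaining verifications --- naturality of the various transformations and the cardinality counts --- are routine.
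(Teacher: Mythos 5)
Your proposal is correct and follows essentially the same route as the paper: the same product $\prod_{\gamma}H(\gamma)$, the same joint inserter indexed by $1$\+cells with the projection as the source functor $F_a$ and $H(a)$ composed with a projection as $G_a$, and the same three families of equifications for conditions (iii${}^*$)--(v${}^*$), concluding via Proposition~\ref{product-proposition}, Theorem~\ref{inserter-theorem}, and Theorem~\ref{equifier-theorem}. You also correctly identify the key asymmetry --- that the oplax direction puts the plain projection in the source position, which is exactly why the hypotheses only require $H$ to land in $\ACC_{\lambda,\kappa}$ rather than $\ACC_{\lambda,\kappa}^\kappa$ --- a point the paper itself emphasizes when contrasting this theorem with the lax and pseudolimit versions.
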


\begin{proof}
 Similarly to the proofs of Corollary~\ref{pseudopullback-corollary}
and Theorems~\ref{nonadditive-diagram-theorem}\+-%
\ref{k-linear-diagram-theorem}, one constructs the oplax limit $\sM$
as a combination of products, inserters, and equifiers.

 Let $\sK=\prod_{\gamma\in\Gamma}H(\gamma)$ be the Cartesian product
of the categories $H(\gamma)$ taken over all objects $\gamma\in\Gamma$,
and let $\sL=\prod_{(a:\gamma\to\delta)\in\Gamma}H(\delta)$ be
the Cartesian product of the categories $H(\delta)$ taken over all
the $1$\+cells $a\:\gamma\rarrow\delta$ in~$\Gamma$.
 Consider the following pair of parallel functors $F$, $G\:\sK\rarrow
\sL$.
 The functor $F$ takes a collection of objects
$(M_\gamma\in H(\gamma))_{\gamma\in\Gamma}\in\sK$ to the collection of
objects $(M_\delta\in H(\delta))_{(a:\gamma\to\delta)}\in\sL$.
 The functor $G$ takes the same collection of objects
$(M_\gamma\in H(\gamma))_{\gamma\in\Gamma}\in\sK$ to the collection of
objects $(F(a)(M_\gamma)\in H(\delta))_{(a:\gamma\to\delta)}\in\sL$.

 Then the related inserter category $\sE$ from
Section~\ref{inserter-secn} (cf.\ Remark~\ref{joint-inserter})
is the category of all sets of data~(i${}^*$\+-ii${}^*$) from
the definition of the oplax limit above.
 The conditions~(iii${}^*$\+-v${}^*$) have not been imposed yet.

 Theorem~\ref{inserter-theorem} tells that $\sE$ is
a $\kappa$\+accessible category and describes its full subcategory of
$\kappa$\+presentable objects.
 The desired oplax limit $\sM$ is a full subcategory $\sM\subset\sE$
which can be produced as a joint equifier category, as in
Section~\ref{equifier-secn} and Remark~\ref{joint-equifier}.
 There are three kinds of pairs of parallel natural transformations
to be equified, corresponding to the three
conditions~(iii${}^*$\+-v${}^*$).

 Firstly, for every object $\gamma\in\Gamma$, we have a pair of
parallel functors $F_\gamma=G_\gamma\:\sE\rarrow H(\gamma)$ and
a pair of parallel natural transformations $\phi_\gamma$,
$\psi_\gamma\:F_\gamma\rarrow G_\gamma$.
 The functor $F_\gamma=G_\gamma$ takes an object $E\in\sE$ to
the object $E_\gamma\in H(\gamma)$.
 The natural transformation $\phi_\gamma$ acts by the morphism
$e_{\id_\gamma}\:E_\gamma\rarrow E_\gamma$ assigned to the identity
$1$\+cell $\id_\gamma\:\gamma\rarrow\gamma$ in $\Gamma$ by
the datum~(ii$^*$) for the object $E\in\sE$; while $\psi_\gamma$
is the identity natural transformation.

 Secondly, for every composable pair of $1$\+cells $a\:\gamma\rarrow
\delta$ and $b\:\delta\rarrow\epsilon$ in $\Gamma$, we have a pair of
parallel functors $F_{a,b}$, $G_{a,b}\:\sE\rightrightarrows H(\epsilon)$
and a pair of parallel natural transformations $\phi_{a,b}$,
$\psi_{a,b}\:F_{a,b}\rightrightarrows G_{a,b}$.
 The functor $F_{a,b}$ takes an object $E\in\sE$ to the object
$E_\epsilon\in H(\epsilon)$.
 The functor $G_{a,b}$ takes an object $E\in\sE$ to the object
$H(ba)(E_\gamma)\in H(\epsilon)$.
 The natural transformation~$\phi_{a,b}$ acts by the morphism
$e_{ba}\:E_\epsilon\rarrow H(ba)(E_\gamma)$.
 The natural transformation~$\psi_{a,b}$ acts by the composition of
morphisms $H(b)(e_a)\circ e_b\:E_\epsilon\rarrow H(b)(E_\delta)
\rarrow H(ba)(E_\gamma)$.

 Thirdly, for every $2$\+cell $t\:a\rarrow b$, where $a$, $b\:\gamma
\rightrightarrows\delta$ is a pair of parallel $1$\+cells in $\Gamma$,
we have a pair of parallel functors $F_t$, $G_t\:\sE\rightrightarrows
H(\delta)$ and a pair of parallel natural transformations $\phi_t$,
$\psi_t\:F_t\rightrightarrows G_t$.
 The functor $F_t$ takes an object $E\in\sE$ to the object
$E_\delta\in H(\delta)$.
 The functor $G_t$ takes an object $E\in\sE$ to the object
$H(b)(E_\gamma)\in H(\delta)$.
 The natural transformation~$\phi_t$ acts by the composition of
morphisms $H(t)_{E_\gamma}\circ e_a\:E_\delta\rarrow H(a)(E_\gamma)
\rarrow H(b)(E_\gamma)$.
 The natural transformation~$\psi_t$ acts by the morphism
$e_b\:E_\delta\rarrow H(b)(E_\gamma)$.

 The resulting joint equifier is the oplax limit~$\sM$.
 Theorem~\ref{equifier-theorem} tells that this category is
$\kappa$\+accessible, and provides the desired description of its
full subcategory of $\kappa$\+presentable objects.
 This proves the first and the third assertions of the theorem,
while the second assertion is easy.
\end{proof}

 Denote by $\ACC_{\lambda,\kappa}^\kappa\subset\ACC_{\lambda,\kappa}$
the following $2$\+subcategory in $\TwoCAT$.
 The objects of $\ACC_{\lambda,\kappa}^\kappa$ are the same as
the objects of $\ACC_{\lambda,\kappa}$, i.~e., all
the $\kappa$\+accessible categories with colimits of
$\lambda$\+indexed chains.
 The $1$\+cells of $\ACC_{\lambda,\kappa}^\kappa$ are the functors
preserving $\kappa$\+directed colimits and colimits of
$\lambda$\+indexed chains, and taking $\kappa$\+presentable objects
to $\kappa$\+presentable objects.
 The $2$\+cells of $\ACC_{\lambda,\kappa}^\kappa$ are the (arbitrary)
natural transformations.

\begin{thm} \label{conical-lax-limit-theorem}
 Let $\kappa$~be a regular cardinal and $\lambda<\kappa$ be a smaller
infinite cardinal.
 Let $\Gamma$ be a $\kappa$\+small\/ $2$\+category and
$H\:\Gamma\rarrow\ACC_{\lambda,\kappa}^\kappa$ be a\/ $2$\+functor.
 Then the lax limit\/ $\sL$ of the\/ $2$\+functor $H$ (computed in\/
$\TwoCAT$, as per the construction above) belongs to\/
$\ACC_{\lambda,\kappa}^\kappa$.
 For every object $\gamma\in\Gamma$, the natural forgetful/projection
functor $\sL\rarrow H(\gamma)$ belongs to\/
$\ACC_{\lambda,\kappa}^\kappa$.
 An object $S\in\sL$ is $\kappa$\+presentable if and only if,
for every object $\gamma\in\Gamma$, the image $S_\gamma$ of $S$ in
$H(\gamma)$ is $\kappa$\+presentable.
\end{thm}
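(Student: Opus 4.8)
The plan is to repeat the proof of Theorem~\ref{conical-oplax-limit-theorem} almost verbatim, building the lax limit $\sL$ as a Cartesian product, an inserter, and a joint equifier, but with the roles of the two inserter functors interchanged so as to produce the data~(ii) (the morphisms $l_a\:H(a)(L_\gamma)\rarrow L_\delta$) rather than~(ii${}^*$). Concretely, I would set $\sK=\prod_{\gamma\in\Gamma}H(\gamma)$ and $\sL'=\prod_{(a:\gamma\to\delta)\in\Gamma}H(\delta)$, and take the parallel pair $F$, $G\:\sK\rarrow\sL'$ in which $F$ sends $(L_\gamma)_{\gamma}$ to the collection $(H(a)(L_\gamma))_{(a:\gamma\to\delta)}$ while $G$ sends $(L_\gamma)_{\gamma}$ to $(L_\delta)_{(a:\gamma\to\delta)}$. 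By Proposition~\ref{product-proposition} the categories $\sK$ and $\sL'$ lie in $\ACC_{\lambda,\kappa}$, and the resulting inserter category $\sE$ (Remark~\ref{joint-inserter}) is precisely the category of all sets of data~(i\+-ii), before the conditions~(iii\+-v) are imposed.

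The one place where the strengthened hypothesis $H\:\Gamma\rarrow\ACC_{\lambda,\kappa}^\kappa$ enters is in checking the inserter assumptions: now it is $F$, rather than $G$, that is assembled from the functors $H(a)$. An object $(L_\gamma)$ of $\sK$ is $\kappa$\+presentable iff each $L_\gamma$ is, and then each component $H(a)(L_\gamma)$ of $F((L_\gamma))$ is $\kappa$\+presentable exactly because every $1$\+cell $H(a)$ of $\ACC_{\lambda,\kappa}^\kappa$ preserves $\kappa$\+presentability; thus $F$ takes $\kappa$\+presentable objects to $\kappa$\+presentable objects, as Theorem~\ref{inserter-theorem} requires, while $G$ is a product of projection functors and so preserves $\kappa$\+directed colimits and colimits of $\lambda$\+indexed chains. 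This interchange of $F$ and $G$, as compared with the oplax case, is exactly why the oplax limit needed only $\ACC_{\lambda,\kappa}$ whereas the lax limit requires $\ACC_{\lambda,\kappa}^\kappa$. Theorem~\ref{inserter-theorem} then gives that $\sE$ is $\kappa$\+accessible with colimits of $\lambda$\+indexed chains, its $\kappa$\+presentable objects being those with all components $\kappa$\+presentable.

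Next I would realize $\sL\subset\sE$ as a joint equifier (Remark~\ref{joint-equifier}), equifying three families of pairs of natural transformations corresponding to~(iii), (iv), and~(v): for each object $\gamma$, the pair $F_\gamma=G_\gamma\:\sE\rarrow H(\gamma)$, $E\mapsto E_\gamma$, with $\phi_\gamma$ acting by $l_{\id_\gamma}$ and $\psi_\gamma$ the identity; for each composable pair $a\:\gamma\rarrow\delta$, $b\:\delta\rarrow\epsilon$, the pair with $F_{a,b}(E)=H(ba)(E_\gamma)$ and $G_{a,b}(E)=E_\epsilon$, with $\phi_{a,b}$, $\psi_{a,b}$ acting by $l_{ba}$ and by $l_b\circ H(b)(l_a)$; and for each $2$\+cell $t\:a\rarrow b$, the pair with $F_t(E)=H(a)(E_\gamma)$ and $G_t(E)=E_\delta$, with $\phi_t$, $\psi_t$ acting by $l_a$ and by $l_b\circ H(t)_{E_\gamma}$. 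In each case the distinguished (``$F$'') functor of the pair is built from an $H$\+image and therefore preserves $\kappa$\+presentability by the $\ACC_{\lambda,\kappa}^\kappa$ hypothesis, which is just what Theorem~\ref{equifier-theorem} demands. Applying Theorem~\ref{equifier-theorem} shows that $\sL$ is $\kappa$\+accessible with colimits of $\lambda$\+indexed chains and that its $\kappa$\+presentable objects are exactly those $(L_\gamma)$ with every $L_\gamma$ $\kappa$\+presentable; this is the first and third assertions. The second assertion is then immediate: the forgetful functors $\sL\rarrow H(\gamma)$ preserve $\kappa$\+directed colimits and colimits of $\lambda$\+indexed chains by construction, and take $\kappa$\+presentable objects to $\kappa$\+presentable objects by the ``only if'' direction just obtained, so they belong to $\ACC_{\lambda,\kappa}^\kappa$.

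The argument is conceptually identical to the oplax one, so the only real obstacle is bookkeeping: setting up the three equifier families with the correct source and target functors and natural transformations so that their equification reproduces precisely the lax axioms~(iii\+-v), and confirming in each case that the ``$F$''-side functor preserves $\kappa$\+presentability. The single genuine idea is the observation that interchanging $F$ and $G$ moves the functors $H(a)$ onto the $\kappa$\+presentability-preserving side of every inserter and equifier involved, which is the structural reason the hypothesis must be strengthened from $\ACC_{\lambda,\kappa}$ to $\ACC_{\lambda,\kappa}^\kappa$.
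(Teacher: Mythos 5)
Your proposal is correct and is precisely what the paper means by its one-line proof (``similar to the proof of the oplax case, with the directions of some arrows suitably reversed''): the same product--inserter--joint-equifier decomposition, with the $H(a)$'s now landing on the $F$-side of the inserter and of each equifier pair, which is exactly where the strengthened hypothesis $\ACC_{\lambda,\kappa}^\kappa$ is consumed. Your explicit identification of this interchange as the reason the lax case needs the stronger hypothesis matches the remark the author makes at the end of Section~9.
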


\begin{proof}
 Similar to the proof of Theorem~\ref{conical-oplax-limit-theorem},
with the directions of some arrows suitably reversed as needed.
\end{proof}

\begin{thm} \label{conical-pseudolimit-theorem}
 Let $\kappa$~be a regular cardinal and $\lambda<\kappa$ be a smaller
infinite cardinal.
 Let $\Gamma$ be a $\kappa$\+small\/ $2$\+category and
$H\:\Gamma\rarrow\ACC_{\lambda,\kappa}^\kappa$ be a\/ $2$\+functor.
 Then the pseudolimit\/ $\sE$ of the\/ $2$\+functor $H$ (computed in\/
$\TwoCAT$, as per the construction above) belongs to\/
$\ACC_{\lambda,\kappa}^\kappa$.
 For every object $\gamma\in\Gamma$, the natural forgetful/projection
functor $\sE\rarrow H(\gamma)$ belongs to\/
$\ACC_{\lambda,\kappa}^\kappa$.
 An object $S\in\sE$ is $\kappa$\+presentable if and only if,
for every object $\gamma\in\Gamma$, the image $S_\gamma$ of $S$ in
$H(\gamma)$ is $\kappa$\+presentable.
\end{thm}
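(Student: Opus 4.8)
The plan is to construct the pseudolimit $\sE$ directly as a combination of products, isomorphers, and equifiers, paralleling the proof of Theorem~\ref{conical-oplax-limit-theorem} step by step, but with the inserter step replaced by the isomorpher construction of Remark~\ref{isomorpher-remark}, so that the resulting structure morphisms come out \emph{invertible} automatically. Concretely, recall that the pseudolimit is the full subcategory of the lax limit consisting of those data~(i)--(ii) whose structure morphisms $l_a$ are isomorphisms; hence it suffices to produce, for every $1$\+cell $a\:\gamma\rarrow\delta$ in $\Gamma$, an invertible morphism $e_a\:H(a)(M_\gamma)\rarrow M_\delta$, and then to impose the coherence conditions~(iii)--(v).

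First I would set $\sK=\prod_{\gamma\in\Gamma}H(\gamma)$, which by Proposition~\ref{product-proposition} lies in $\ACC_{\lambda,\kappa}$ (with $\kappa$\+presentable objects the componentwise $\kappa$\+presentable tuples), using that $\Gamma$ has fewer than $\kappa$ objects. For each $1$\+cell $a\:\gamma\rarrow\delta$ I would consider the two functors $P_a,Q_a\:\sK\rightrightarrows H(\delta)$ given by $P_a(M)=H(a)(M_\gamma)$ and $Q_a(M)=M_\delta$. Crucially, \emph{both} $P_a$ and $Q_a$ preserve $\kappa$\+directed colimits, colimits of $\lambda$\+indexed chains, and $\kappa$\+presentable objects: the projections do so by Proposition~\ref{product-proposition}, and $H(a)$ does so because $H$ takes values in $\ACC_{\lambda,\kappa}^\kappa$. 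Assembling these into a single pair $P=(P_a)_a$, $Q=(Q_a)_a$ with codomain $\prod_a H(\delta)$ and applying the isomorpher construction of Remark~\ref{isomorpher-remark} (which is built from products, inserters, and equifiers through Corollary~\ref{pseudopullback-corollary}), I obtain a category $\sD\in\ACC_{\lambda,\kappa}$ whose objects are tuples $(M_\gamma)_\gamma$ equipped with an isomorphism $e_a\:H(a)(M_\gamma)\simeq M_\delta$ for every $1$\+cell $a$, and whose $\kappa$\+presentable objects are exactly those with all components $M_\gamma$ $\kappa$\+presentable.

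It then remains to carve out $\sE$ inside $\sD$ as a joint equifier enforcing the three coherence conditions, exactly as in the proof of Theorem~\ref{conical-oplax-limit-theorem}: one family indexed by objects $\gamma$ (imposing $e_{\id_\gamma}=\id_{M_\gamma}$, using $H(\id_\gamma)=\id_{H(\gamma)}$), one indexed by composable pairs $a$, $b$ (imposing $e_{ba}=e_b\circ H(b)(e_a)$, which is an equation between two parallel natural transformations with common source $H(ba)(M_\gamma)$, since $H$ is \emph{strict} and so $H(ba)=H(b)H(a)$), and one indexed by $2$\+cells $t\:a\rarrow b$ (imposing $e_b\circ H(t)_{M_\gamma}=e_a$). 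Since $\Gamma$ is $\kappa$\+small and $\kappa$ is regular, each of these families has cardinality smaller than~$\kappa$, so the joint equifier of Remark~\ref{joint-equifier} applies; in each pair the functor playing the role of $F$ in Theorem~\ref{equifier-theorem} sends $E$ to an object of the form $H(c)(M_\gamma)$ for a suitable $1$\+cell~$c$, and hence preserves $\kappa$\+presentable objects, as that theorem requires. Theorem~\ref{equifier-theorem} then yields that $\sE$ is $\kappa$\+accessible and that its $\kappa$\+presentable objects are precisely those whose components $S_\gamma$ are all $\kappa$\+presentable; the assertion about the projection functors $\sE\rarrow H(\gamma)$ is immediate from this characterization together with Proposition~\ref{product-proposition}.

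The main subtlety, and the reason the hypothesis is strengthened from $\ACC_{\lambda,\kappa}$ (as in Theorem~\ref{conical-oplax-limit-theorem}) to $\ACC_{\lambda,\kappa}^\kappa$, is precisely the passage from the inserter to the isomorpher: an arbitrary structure morphism can be produced using only that the codomain projection preserves $\kappa$\+presentables, but \emph{inverting} it treats $P_a$ and $Q_a$ symmetrically, forcing us to require that the transition functors $H(a)$ themselves take $\kappa$\+presentable objects to $\kappa$\+presentable objects. A secondary point to check is that the isomorpher and equifier steps preserve the existence of colimits of $\lambda$\+indexed chains and $\kappa$\+directed colimits, together with their compatibility with the forgetful functors, so that $\sE$ genuinely lands back in $\ACC_{\lambda,\kappa}^\kappa$; this is inherited from the corresponding observations recorded in the proofs of Theorems~\ref{equifier-theorem} and~\ref{inserter-theorem}.
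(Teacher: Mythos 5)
Your proposal is correct and follows essentially the same route as the paper, whose proof of Theorem~\ref{conical-pseudolimit-theorem} simply says to repeat the argument for the (op)lax limits with the inserter step replaced by the isomorpher construction of Remark~\ref{isomorpher-remark}, followed by the joint equifier. Your additional observations --- that both $P_a$ and $Q_a$ must preserve $\kappa$\+presentable objects (which is exactly why the hypothesis is $\ACC_{\lambda,\kappa}^\kappa$ rather than $\ACC_{\lambda,\kappa}$), and that the source functors of the equified transformations are of the form $E\mapsto H(c)(E_\gamma)$ and hence preserve $\kappa$\+presentables --- are precisely the details the paper leaves implicit.
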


\begin{proof}
 Similar to the proofs of
Theorems~\ref{conical-oplax-limit-theorem}
and~\ref{conical-lax-limit-theorem}, with the only difference that it
is convenient to use the isomorpher construction of
Remark~\ref{isomorpher-remark} instead of the inserter construction
of Theorem~\ref{inserter-theorem}.
 The equifier construction of Theorem~\ref{equifier-theorem} still
needs to be used.
 (Cf.~\cite[Propositions~4.4 and~5.1]{Kel}
and~\cite[Proposition~2.1]{BKPS}.)
\end{proof}

\begin{rem}
 The notions of (op)lax limit and pseudolimit are somewhat relaxed.
 The related strict notion is the \emph{$2$\+limit} of categories.
 $2$\+limits of categories are \emph{not} well-behaved in connection
with accessible categories, generally speaking~\cite[paragraph after
Proposition~5.1.1]{MP}, \cite[Example~2.68]{AR}.
 The well-behaved ones among the (weighted) $2$\+limits are called
\emph{flexible limits} in~\cite{BKPS}.
 Still, the (op)lax limits and pseudolimits are strict enough to be
defined \emph{up to isomorphism of categories} (as per
the constructions above) rather than just up to category equivalence.

 The case of the pseudopullback is instructive.
 Let $\Gamma$ be the following small $2$\+category.
 The $2$\+category $\Gamma$ has three objects $A$, $B$, and $C$,
and two nonidentity $1$\+cells $a\:A\rarrow C$ and $b\:B\rarrow C$.
 There are no nonidentity $2$\+cells in~$\Gamma$.
 Hence a $2$\+functor $H\:\Gamma\rarrow\TwoCAT$ is the same thing as
a triple of categories $\sA$, $\sB$, and $\sC$ together with a pair of
functors $\Theta_\sA\:\sA\rarrow\sC$ and $\Theta_\sB\:\sB\rarrow\sC$,
as in Section~\ref{pseudopullback-secn}.
 Then~\cite[paragraph after Proposition~5.1.1]{MP} explains that
the $2$\+pullbacks, i.~e., the $2$\+limits of $2$\+functors
$H\:\Gamma\rarrow\TwoCAT$, do \emph{not} preserve accessibility
of categories.

 The (op)lax limits and pseudolimits are better behaved and preserve
accessibility, as per the theorems above in this section; but one has
to be careful.
 Looking into these constructions, one can observe that the definition
of the pseudopullback in Section~\ref{pseudopullback-secn} was,
strictly speaking, an abuse of terminology.
 The pseudolimit $\sE$ of a $2$\+functor $H\:\Gamma\rarrow\TwoCAT$ is
the category of all quintuples $(A,B,C,\theta_a,\theta_b)$, where
$A\in\sA$, \,$B\in\sB$, and $C\in\sC$ are three objects and
$\theta_a\:\Theta_\sA(A)\simeq C$, \ $\theta_b\:\Theta_\sB(B)\simeq C$
are two isomorphisms (cf.~\cite[Proposition~3.1]{CR},
\cite[Pseudopullback Theorem~2.2]{RR}).
 The pseudopullback $\sD$ as defined in
Section~\ref{pseudopullback-secn} is \emph{naturally equivalent} to
the pseudolimit $\sE$ of the $2$\+functor $H$, but \emph{not}
isomorphic to it.

 The even more relaxed notion of a limit of categories defined up to
a category equivalence is called
the \emph{bilimit}~\cite[Section~5.1.1]{MP}, \cite[Section~6]{Kel}.
 In the terminology of~\cite[Section~5.1.1]{MP}, the pseudolimits
are called \emph{strong bilimits}.
\end{rem}

\Section{Weighted Pseudolimits}  \label{weighted-secn}

 Let $\Gamma$ be a small $2$\+category and $W\:\Gamma\rarrow
\TwoCat$ be a $2$\+functor (so the category $W(\gamma)$ is small
for every $\gamma\in\Gamma$).
 The $2$\+functor $W$ is called a \emph{weight}.

 Let $H\:\Gamma\rarrow\TwoCAT$ be another $2$\+functor.
 The \emph{weighted pseudolimit} $\{W,H\}_p$
\,\cite[Sections~1\+-2]{BKPS} (called ``indexed pseudolimit'' in
the terminology of~\cite[Sections~2 and~5]{Kel} or ``strong weighted
bilimit'' in the terminology of~\cite[Section~5.1.1]{MP}) can be
simply constructed as the category of $1$\+cells $W\rarrow H$ in
the $2$\+category of pseudonatural transformations
$\Psd[\Gamma,\TwoCAT]$ (mentioned in Section~\ref{2cat-prelim-secn}).
 So $\{W,H\}_p=\Psd[\Gamma,\TwoCAT](W,H)$ \,\cite[formula~(5.5)]{Kel}.

 The strict version of the same construction is
the \emph{weighted\/ $2$\+limit} $\{W,H\}$, which can be defined
as the category of $1$\+cells $W\rarrow H$ in the $2$\+category
of $2$\+natural transformations $[\Gamma,\TwoCAT]$; so
$\{W,H\}=[\Gamma,\TwoCAT](W,H)$ \,\cite[formula~(2.5)]{Kel}.
 It is explained in~\cite[Section~4]{Kel} or~\cite[Section~1]{BKPS}
how to obtain the inserters, equifiers, and isomorphers (iso-inserters)
as particular cases of weighted $2$\+limits.
 Up to category equivalence, they are also particular cases of
weighted pseudolimits.

 Taking $\Gamma$ to be the $2$\+category with a single object,
a single $1$\+cell, and a single $2$\+cell, one obtains
the construction of the diagram category (as in
Theorem~\ref{nonadditive-diagram-theorem}), called the ``cotensor
product'' in~\cite[Section~3]{Kel}, \cite[Section~1]{BKPS},
as the particular case of the weighted $2$\+limit or weighted
pseudolimit.

 Taking $W$ to be the $2$\+functor assigning to every object
$\gamma\in\Gamma$ the category with a single object and a single
morphism, one obtains the construction of the pseudolimit from
Section~\ref{conical-secn} as a particular case of weighted pseudolimit.
 To distinguish them from the more general weighted pseudolimits,
the pseudolimits from Section~\ref{conical-secn} are called
\emph{conical pseudolimits}~\cite[Sections~3 and~5]{Kel},
\cite[Sections~1\+-2]{BKPS}.

 The notation $\ACC_{\lambda,\kappa}^\kappa\subset\ACC_{\lambda,\kappa}
\subset\TwoCAT$ was introduced in Section~\ref{conical-secn}.

\begin{thm} \label{weighted-pseudolimit-theorem}
 Let $\kappa$~be a regular cardinal and $\lambda<\kappa$ be a smaller
infinite cardinal.
 Let $\Gamma$ be a $\kappa$\+small\/ $2$\+category and
$W\:\Gamma\rarrow\TwoCat$ be a $2$\+functor such that the category
$W(\gamma)$ is $\kappa$\+small for every object $\gamma\in\Gamma$.
 Let $H\:\Gamma\rarrow\ACC_{\lambda,\kappa}^\kappa$ be
a\/ $2$\+functor.
 Then the weighted pseudolimit $\{W,H\}_p$ (computed in\/
$\TwoCAT$, as per the construction above) belongs to\/
$\ACC_{\lambda,\kappa}^\kappa$.
\end{thm}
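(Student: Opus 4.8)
The plan is to realise the weighted pseudolimit $\{W,H\}_p$, up to equivalence of categories, as an iterated application of the elementary constructions already treated above---products, cotensors (functor categories), inserters, equifiers, and isomorphers---and to check that at every stage one stays inside the $2$\+category $\ACC_{\lambda,\kappa}^\kappa$. The key general input is that all weighted pseudolimits are built from products, inserters, and equifiers up to category equivalence \cite{Kel,BKPS}. Since membership in $\ACC_{\lambda,\kappa}^\kappa$ amounts to $\kappa$\+accessibility together with the existence of colimits of $\lambda$\+indexed chains, and both of these properties are invariant under equivalence of categories, it suffices to produce $\{W,H\}_p$ by fewer than~$\kappa$ such operations starting from the categories $H(\gamma)$.

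First I would set up the initial building blocks. For each object $\gamma\in\Gamma$ the category $W(\gamma)$ is $\kappa$\+small and $H(\gamma)\in\ACC_{\lambda,\kappa}^\kappa$, so Theorem~\ref{nonadditive-diagram-theorem} shows that the cotensor $\Fun(W(\gamma),H(\gamma))$ is $\kappa$\+accessible, with $\kappa$\+presentable objects exactly the functors $W(\gamma)\rarrow H(\gamma)_{<\kappa}$; as colimits in functor categories are computed pointwise, it also has colimits of $\lambda$\+indexed chains, and hence lies in $\ACC_{\lambda,\kappa}^\kappa$. Because $\Gamma$ is $\kappa$\+small, the product $\prod_{\gamma\in\Gamma}\Fun(W(\gamma),H(\gamma))$ has fewer than~$\kappa$ factors, so Proposition~\ref{product-proposition} keeps it in $\ACC_{\lambda,\kappa}^\kappa$.

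Next I would impose the pseudonaturality data and the coherence conditions. An object of $\{W,H\}_p$ is a pseudonatural transformation $W\rarrow H$: functors $\alpha_\gamma\:W(\gamma)\rarrow H(\gamma)$ together with, for every $1$\+cell $a\:\gamma\rarrow\delta$, an invertible comparison between the two induced functors $W(\gamma)\rarrow H(\delta)$, namely the postcomposition $H(a)\circ\alpha_\gamma$ and the precomposition $\alpha_\delta\circ W(a)$, all subject to coherence with identities, composition, and the $2$\+cells of~$\Gamma$. Both induced functors from the product above into $\Fun(W(\gamma),H(\delta))$ are $1$\+cells of $\ACC_{\lambda,\kappa}^\kappa$: postcomposition with $H(a)$ inherits preservation of $\kappa$\+directed colimits, colimits of $\lambda$\+indexed chains, and $\kappa$\+presentable objects from $H(a)$ (which is such a $1$\+cell because $H\:\Gamma\rarrow\ACC_{\lambda,\kappa}^\kappa$), while precomposition with $W(a)$ preserves all of these by pointwise reindexing. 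The invertible comparisons are then adjoined by isomorpher constructions (Remark~\ref{isomorpher-remark}), and the coherence equations are cut out by joint equifiers (Theorem~\ref{equifier-theorem} and Remark~\ref{joint-equifier}), indexed by the identity $1$\+cells, the composable pairs of $1$\+cells, and the $2$\+cells of~$\Gamma$; since $\Gamma$ is $\kappa$\+small there are fewer than~$\kappa$ such conditions. This is entirely parallel to the conical pseudolimit case (Theorem~\ref{conical-pseudolimit-theorem}), the only new feature being that the factors are cotensors rather than the bare categories $H(\gamma)$.

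The step I expect to be the main obstacle is the bookkeeping: arranging the standard decomposition of $\{W,H\}_p$ from \cite{Kel,BKPS} so that \emph{every} product involved has fewer than~$\kappa$ factors and \emph{every} connecting functor is a $1$\+cell of $\ACC_{\lambda,\kappa}^\kappa$ (in particular preserves $\kappa$\+presentable objects), so that Proposition~\ref{product-proposition} and Theorems~\ref{inserter-theorem} and~\ref{equifier-theorem} genuinely apply at each stage. The $\kappa$\+smallness of $\Gamma$ and of each $W(\gamma)$ is what secures the cardinality bounds, while the hypothesis that $H$ takes values in $\ACC_{\lambda,\kappa}^\kappa$ rather than merely $\ACC_{\lambda,\kappa}$ is what propagates preservation of $\kappa$\+presentable objects through the iteration. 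Granting this, $\{W,H\}_p$ is a composite of operations each preserving membership in $\ACC_{\lambda,\kappa}^\kappa$, and the equivalence\+invariance noted above absorbs the ``up to equivalence'' in the decomposition, giving $\{W,H\}_p\in\ACC_{\lambda,\kappa}^\kappa$.
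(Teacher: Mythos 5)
Your proposal takes essentially the same route as the paper: the paper's proof simply invokes the fact that all weighted pseudolimits are built from products, inserters, and equifiers (citing \cite[Proposition~5.2]{Kel} and \cite[Proposition~2.1]{BKPS}) and then applies Proposition~\ref{product-proposition}, Theorem~\ref{equifier-theorem}, and Theorem~\ref{inserter-theorem}. Your additional bookkeeping---the cotensors via Theorem~\ref{nonadditive-diagram-theorem}, the isomorphers for the invertible comparison cells, the equifiers for coherence, and the $\kappa$\+smallness counts---is a correct and harmless fleshing-out of that same argument.
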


\begin{proof}
 The point is that all weighted pseudolimits can be constructed in terms
of products, inserters, and equifiers~\cite[Proposition~5.2]{Kel},
\cite[Proposition~2.1]{BKPS}; so the assertion follows from
Proposition~\ref{product-proposition}, Theorem~\ref{equifier-theorem},
and Theorem~\ref{inserter-theorem}.
 The same argument applies also to all weighted
bilimits~\cite[Section~6]{Kel} and all flexible weighted
$2$\+limits~\cite[Theorem~4.9 and Remark~7.6]{BKPS}.
\end{proof}

\begin{cor} \label{weighted-pseudolimit-corollary}
 Let $\lambda$ and~$\kappa$ be infinite regular cardinals such that
$\lambda\triangleleft\kappa$ in the sense of\/~\cite[\S2.3]{MP}
or\/~\cite[Definition~2.12]{AR}.
 Let\/ $\Gamma$ be a $\kappa$\+small\/ $2$\+category and
$W\:\Gamma\rarrow\TwoCat$ be a $2$\+functor such that the category
$W(\gamma)$ is $\kappa$\+small for every object $\gamma\in\Gamma$.
 Let $H\:\Gamma\rarrow\TwoCAT$ be a\/ $2$\+functor such that, for
every object $\gamma\in\Gamma$, the category $H(\gamma)$ is
$\lambda$\+accessible, and for every\/ $1$\+cell $a\:\gamma\rarrow
\delta$ in\/ $\Gamma$, the functor $H(a)\:H(\gamma)\rarrow H(\delta)$
preserves $\lambda$\+directed colimits and takes $\kappa$\+presentable
objects to $\kappa$\+presentable objects.
 Then the weighted pseudolimit $\{W,H\}_p$ (computed in\/ $\TwoCAT$,
as per the construction above) is a\/ $\kappa$\+accessible category.
\end{cor}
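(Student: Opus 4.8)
The plan is to reduce the statement to Theorem~\ref{weighted-pseudolimit-theorem} by checking that, under the present hypotheses, the $2$\+functor $H$ already takes values in the $2$\+subcategory $\ACC_{\lambda,\kappa}^\kappa\subset\TwoCAT$. Once this is known, Theorem~\ref{weighted-pseudolimit-theorem} applies verbatim and yields $\{W,H\}_p\in\ACC_{\lambda,\kappa}^\kappa$; in particular $\{W,H\}_p$ is $\kappa$\+accessible, which is the assertion. So the entire argument reduces to verifying that the object categories $H(\gamma)$ and the $1$\+cells $H(a)$ meet the defining conditions of $\ACC_{\lambda,\kappa}^\kappa$.

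First I would raise the accessibility rank of the $H(\gamma)$. Each $H(\gamma)$ is $\lambda$\+accessible by hypothesis, and the defining property of the sharp inequality $\lambda\triangleleft\kappa$ \cite[Definition~2.12]{AR} guarantees that every $\lambda$\+accessible category is in turn $\kappa$\+accessible \cite[Theorem~2.11]{AR}. Moreover, since $\lambda$ is regular, the ordinal $\lambda$, regarded as a linearly ordered index category, is $\lambda$\+directed: a subset of cardinality $<\lambda$ has supremum $<\lambda$ by regularity, hence an upper bound in~$\lambda$. Thus a $\lambda$\+indexed chain is a $\lambda$\+directed diagram, and its colimit exists in $H(\gamma)$, which possesses all $\lambda$\+directed colimits. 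Hence each $H(\gamma)$ is an object of $\ACC_{\lambda,\kappa}^\kappa$.

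Next I would treat the $1$\+cells. Each $H(a)$ preserves $\lambda$\+directed colimits by hypothesis. Because $\lambda\le\kappa$, every $\kappa$\+directed index category is a fortiori $\lambda$\+directed (a subset of cardinality $<\lambda$ has cardinality $<\kappa$, so admits an upper bound), and therefore preservation of $\lambda$\+directed colimits automatically entails preservation of $\kappa$\+directed colimits; by the same reasoning it entails preservation of colimits of $\lambda$\+indexed chains. The hypothesis that $H(a)$ carries $\kappa$\+presentable objects to $\kappa$\+presentable objects is exactly the one remaining requirement for a $1$\+cell of $\ACC_{\lambda,\kappa}^\kappa$. As the $2$\+cells of $\ACC_{\lambda,\kappa}^\kappa$ are arbitrary natural transformations, nothing more is needed, and we conclude that $H\:\Gamma\rarrow\ACC_{\lambda,\kappa}^\kappa$ is a $2$\+functor.

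The only nonformal ingredient is the rank\+raising step for the $H(\gamma)$, and this is precisely where the sharp inequality $\lambda\triangleleft\kappa$ is indispensable: under a bare inequality $\lambda<\kappa$ a $\lambda$\+accessible category need not be $\kappa$\+accessible, and the reduction would collapse. I expect the rest to be entirely routine, being merely the observation that preservation of $\lambda$\+directed colimits is inherited by the coarser $\kappa$\+directed colimits and by colimits of $\lambda$\+indexed chains. With $H$ seen to land in $\ACC_{\lambda,\kappa}^\kappa$, Theorem~\ref{weighted-pseudolimit-theorem} closes the argument.
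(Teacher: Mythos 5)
Your proposal is correct and is exactly the intended argument: the paper's own proof of this corollary is the one-line remark that it ``follows immediately'' from Theorem~\ref{weighted-pseudolimit-theorem}, and your verification that the hypotheses force $H$ to land in $\ACC_{\lambda,\kappa}^\kappa$ (raising the accessibility rank via $\lambda\triangleleft\kappa$ and \cite[Theorem~2.11]{AR}, and observing that $\lambda$\+directed colimit preservation subsumes both $\kappa$\+directed colimits and $\lambda$\+indexed chains) supplies precisely the details the paper leaves implicit.
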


\begin{proof}
 Follows immediately from
Theorem~\ref{weighted-pseudolimit-theorem}.
\end{proof}

\begin{rem}
 The assertion of Theorem~\ref{weighted-pseudolimit-theorem} captures
many, but not all the aspects of the preceding results in this paper.
 In particular, Theorems~\ref{equifier-theorem}
and~\ref{inserter-theorem} are \emph{not} particular cases of
Theorem~\ref{weighted-pseudolimit-theorem}, if only because
the assumptions of Theorems~\ref{equifier-theorem}\+-%
\ref{inserter-theorem} are more general.
 Indeed, in the assumptions of Theorems~\ref{equifier-theorem}\+-%
\ref{inserter-theorem} the functor $F$ is required to belong to
$\ACC_{\lambda,\kappa}^\kappa$, while the functor $G$ may belong
to the wider $2$\+category $\ACC_{\lambda,\kappa}$.
 In other words, the functor $G$ \emph{need not} take
$\kappa$\+presentable objects to $\kappa$\+presentable objects.
 This subtlety, which was emphasized already in~\cite[Section~3]{Ulm},
manifests itself in the related difference between the formulations
of Theorem~\ref{conical-oplax-limit-theorem}, on the one hand, and
Theorems~\ref{conical-lax-limit-theorem}\+-%
\ref{conical-pseudolimit-theorem}, on the other hand.
 It plays an important role in the application to comodules over
corings worked out in~\cite[Theorem~3.1 and Remark~3.2]{Pflcc} and
in the application to corings in~\cite[Theorem~4.2]{Pcor}.
\end{rem}

\Section{Toy Examples}

 The examples in this section aim to illustrate the main results
of this paper in the context of additive categories, modules categories,
and flat modules, which served as the main motivation for the present
research.
 We refer to the papers~\cite{PS6,Pflcc,Pres,Pcor} for more substantial
applications to flat quasi-coherent sheaves, flat comodules and
contramodules, arbitrary and flat coalgebras and corings,
and flat/injective (co)resolutions.
 This section also serves as a reference source
for~\cite{PS6,Pflcc,Pres,Pcor}, as it contains some results that are
useful as building blocks for the more complicated constructions.

\subsection{Modules and flat modules}
 Let $R$ be an associative ring.
 We denote by $R\Modl$ the abelian category of left $R$\+modules
and by $R\Modl_\fl\subset R\Modl$ the full subcategory of flat
left $R$\+modules.

 The following two propositions are fairly standard.
 
\begin{prop} \label{modules-locally-presentable}
 For any ring $R$ and any regular cardinal~$\kappa$, the category
of $R$\+modules $R\Modl$ is locally $\kappa$\+presentable.
 The $\kappa$\+presentable objects of $R\Modl$ are precisely all
the left $R$\+modules that can be constructed as the cokernel of
a morphism of free left $R$\+modules with less than~$\kappa$
generators.  \qed
\end{prop}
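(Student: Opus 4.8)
The plan is to verify separately the two ingredients of local $\kappa$\+presentability (in the sense of \cite[Definitions~1.9 and~1.17]{AR}): that $R\Modl$ is cocomplete, and that it is $\kappa$\+accessible with the $\kappa$\+presentable objects described as claimed. Cocompleteness is standard module theory (coproducts are direct sums, coequalizers are quotients, and $\kappa$\+directed colimits are computed on underlying sets and are exact), so I would dispose of it in one sentence. For the substantive part, write $P_{<\kappa}$ for the class of modules isomorphic to $\operatorname{coker}(R^{(Y)}\rarrow R^{(X)})$ for free modules with $|X|,|Y|<\kappa$. First I would check that every such cokernel is $\kappa$\+presentable: the module $R$ is $\kappa$\+presentable for every $\kappa$ because $\Hom_R(R,{-})$ is the forgetful functor, which preserves all filtered colimits; then $R^{(X)}$ with $|X|<\kappa$, and the cokernel in question, are $\kappa$\+small colimits of copies of $R$, hence $\kappa$\+presentable by closure of $\kappa$\+presentable objects under $\kappa$\+small colimits \cite[Proposition~1.16]{AR}.

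Next I would show that every module is a $\kappa$\+directed colimit of objects of $P_{<\kappa}$, which yields $\kappa$\+accessibility once one notes that there is only a set of isomorphism classes in $P_{<\kappa}$ (let $X$, $Y$ range over cardinals below~$\kappa$). Fix a free presentation $R^{(B)}\overset{d}{\rarrow}R^{(A)}\rarrow M\rarrow0$ and index over the poset of pairs $(A',B')$ with $A'\subseteq A$, $B'\subseteq B$, $|A'|,|B'|<\kappa$, and $d(R^{(B')})\subseteq R^{(A')}$, setting $M_{A',B'}=\operatorname{coker}(R^{(B')}\rarrow R^{(A')})\in P_{<\kappa}$. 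Regularity of~$\kappa$ makes this poset $\kappa$\+directed: fewer than $\kappa$ pairs have a union of size $<\kappa$, and one may enlarge $A'$ by the finitely supported coordinates of the elements $d(b)$, $b\in B'$, while keeping the size below~$\kappa$. Since $\kappa$\+directed colimits of modules are exact and satisfy $\varinjlim R^{(A')}=R^{(A)}$ and $\varinjlim R^{(B')}=R^{(B)}$, passing to cokernels gives $\varinjlim M_{A',B'}=M$.

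It remains to prove the converse inclusion $\sK_{<\kappa}\subseteq P_{<\kappa}$. By the previous paragraph together with the fact, recalled in Section~\ref{preliminaries-secn} from \cite[Remarks~1.9 and~2.2(4)]{AR}, that the $\kappa$\+presentable objects of a $\kappa$\+accessible category are exactly the retracts of the objects of the generating set, any $\kappa$\+presentable $M$ is a retract of some $N\in P_{<\kappa}$. Hence the real content is that $P_{<\kappa}$ is closed under retracts, and I expect this to be the one genuinely nonformal step. I would obtain it from closure of $P_{<\kappa}$ under $\kappa$\+small colimits: $\kappa$\+small coproducts stay in $P_{<\kappa}$ because a $\kappa$\+small sum of cardinals below~$\kappa$ is again below~$\kappa$ by regularity, and a coequalizer of $f,g\:N\rightrightarrows N'$ stays in $P_{<\kappa}$ by adjoining to a presentation of $N'$ the fewer-than-$\kappa$ relations $(f-g)$ applied to the generators of~$N$. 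Finally, given a retract one writes it as $\operatorname{coker}(\id_N-p)$ for the idempotent endomorphism $p$ of $N$ with image $M$ (using $\operatorname{coker}(\id_N-p)=N/\ker p\cong M$); this is a $\kappa$\+small colimit built from $N\in P_{<\kappa}$, so it lies in $P_{<\kappa}$, completing the identification $\sK_{<\kappa}=P_{<\kappa}$.
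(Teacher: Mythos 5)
The paper offers no proof of this proposition at all -- it is introduced as ``fairly standard'' and closed with an immediate \qed{} -- so there is nothing to compare your argument against. Your proof is correct and complete, and it follows the route one would expect: presentability of the named cokernels via closure of $\kappa$\+presentable objects under $\kappa$\+small colimits, exhaustion of an arbitrary module by $\kappa$\+small subpresentations of a fixed free presentation (using regularity of~$\kappa$ and finite supports), and the converse via the retract characterization from \cite[Remarks~1.9 and~2.2(4)]{AR}, with the only genuinely nonformal step -- closure of the class of such cokernels under retracts -- handled correctly by realizing a retract as $\operatorname{coker}(\id_N-p)$ for an idempotent~$p$.
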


\begin{prop} \label{flat-modules-accessible}
 For any ring $R$ and any regular cardinal~$\kappa$, the category
of flat $R$\+modules $R\Modl_\fl$ is $\kappa$\+accessible.
 All directed colimits exist in $R\Modl_\fl$ and agree with
the ones in $R\Modl$.
 The $\kappa$\+presentable objects of $R\Modl_\fl$ are precisely
all those flat left $R$\+modules that are $\kappa$\+presentable
as objects of $R\Modl$.
\end{prop}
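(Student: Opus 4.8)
The plan is to deduce all three assertions from Proposition~\ref{accessible-subcategory}, the only nonformal input being the classical theorem of Lazard presenting flat modules as directed colimits of finitely generated free ones.

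First I would dispose of assertion~(2) and the easy half of~(3). A directed colimit of flat modules, taken in $R\Modl$, is again flat: for any right $R$\+module $X$ and any monomorphism $X\rarrow X'$, the map $X\otimes_R\varinjlim_d N_d\rarrow X'\otimes_R\varinjlim_d N_d$ is the directed colimit of the monomorphisms $X\otimes_R N_d\rarrow X'\otimes_R N_d$, hence again a monomorphism, since tensor products commute with colimits and directed colimits are exact in module categories. Thus $R\Modl_\fl$ is closed under directed colimits in $R\Modl$; in particular these colimits exist in $R\Modl_\fl$ and agree with the ones in $R\Modl$, proving~(2). Consequently, a flat module that is $\kappa$\+presentable in $R\Modl$ is also $\kappa$\+presentable in $R\Modl_\fl$, because the $\kappa$\+directed colimits and the relevant $\Hom$\+sets are computed identically in the two categories.

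Next I would invoke Proposition~\ref{accessible-subcategory} with $\sK=R\Modl$, which is $\kappa$\+accessible by Proposition~\ref{modules-locally-presentable}, and with $\sS$ a set of representatives of the isomorphism classes of flat modules that are $\kappa$\+presentable in $R\Modl$ (this is a set, since $\sK_{<\kappa}$ has only a set of isomorphism classes). The proposition then produces a $\kappa$\+accessible full subcategory $\varinjlim_{(\kappa)}\sS\subset R\Modl$ whose $\kappa$\+presentable objects are exactly the retracts in $R\Modl$ of the objects of $\sS$; since retracts of flat modules are flat and retracts of $\kappa$\+presentable modules are $\kappa$\+presentable, these are precisely the flat $\kappa$\+presentable modules. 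It therefore remains to identify $\varinjlim_{(\kappa)}\sS$ with $R\Modl_\fl$. The inclusion $\varinjlim_{(\kappa)}\sS\subset R\Modl_\fl$ is immediate from~(2). For the reverse inclusion, the last clause of Proposition~\ref{accessible-subcategory} reduces everything to the following \emph{factorization property}: every morphism $T\rarrow M$ from an object $T\in\sK_{<\kappa}$ into a flat module $M$ factors through an object of $\sS$, i.e.\ through a flat $\kappa$\+presentable module.

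The factorization property is the one genuinely nonformal step, and it is where I expect the main difficulty to lie. By Lazard's theorem, write $M=\varinjlim_{i\in I}F_i$ as a directed colimit of finitely generated free modules. Present $T$ as the cokernel of a map $R^{(B)}\rarrow R^{(C)}$ with $|B|,|C|<\kappa$ (Proposition~\ref{modules-locally-presentable}); then $T\rarrow M$ is encoded by a family $(m_c)_{c\in C}$ of elements of $M$ for which the defining relations hold in $M$. Choosing for each $c$ an index $i_c$ and a lift of $m_c$ to $F_{i_c}$, and for each relation an index witnessing its vanishing in some $F_i$, involves fewer than $\kappa$ indices; enlarging this set to a $<\kappa$\+sized \emph{directed} subposet $I_1\subset I$ is possible for every regular cardinal~$\kappa$, by an $\omega$\+step closing\+off under upper bounds (for $\kappa=\aleph_0$ one simply adjoins a single upper bound). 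Now $P=\varinjlim_{i\in I_1}F_i$ is flat, being a directed colimit of free modules, and it is $\kappa$\+presentable in $R\Modl$, being a $<\kappa$\+indexed colimit of finitely presentable modules~\cite[Proposition~1.16]{AR}; thus $P$ is isomorphic to an object of $\sS$. The chosen lifts define a morphism $T\rarrow P$ whose composite with the canonical map $P\rarrow M$ recovers the original morphism, as one checks on the generators $e_c\mapsto m_c$, using directedness of $I_1$ to see that the relations are already killed in $P$. This establishes the factorization property, whence $\varinjlim_{(\kappa)}\sS=R\Modl_\fl$, and with it assertions~(1) and~(3). The reason to route the argument through Lazard's theorem rather than through $<\kappa$\+generated pure submodules of $M$ is precisely to avoid the cardinality obstruction that the pure\+submodule closing\+off meets when $\kappa\le|R|$ (and already when $\kappa=\aleph_0$); the colimit construction above sidesteps it uniformly in~$\kappa$.
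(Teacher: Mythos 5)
Your proof is correct. It follows the same skeleton as the paper's (which is only a few lines long): both rest on Proposition~\ref{accessible-subcategory} together with the Govorov--Lazard theorem. The one place where you genuinely diverge is the treatment of general~$\kappa$. The paper first records that $R\Modl_\fl$ is finitely accessible with finitely presentable objects the finitely generated projectives (Govorov--Lazard), and then cites the index-raising theorem \cite[Theorem~2.11 and Example~2.13(1)]{AR} (i.e.\ $\aleph_0\triangleleft\kappa$ for every regular~$\kappa$) to pass from $\aleph_0$ to~$\kappa$. You instead verify the factorization criterion of Proposition~\ref{accessible-subcategory} directly for each~$\kappa$, by choosing fewer than~$\kappa$ indices in Lazard's directed system and closing off to a $\kappa$\+small directed subposet in $\omega$~steps. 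That closing-off is essentially an inlined special case of the proof of \cite[Theorem~2.11]{AR}, so nothing new is really being proved, but your version is self-contained where the paper's is a citation, and it has the mild advantage of handling $\kappa=\aleph_0$ and $\kappa$ uncountable uniformly rather than as a base case plus an induction on the accessibility index. One small point worth making explicit in your factorization step: before finding an index killing a given relation, you must first push the chosen lifts $\tilde m_c$ of the finitely many generators occurring in that relation to a common upper bound in~$I$; since $I$ is a directed \emph{poset}, the transition maps are unique and no coherence issue arises, so this costs only one extra index per relation and does not affect the cardinality count.
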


\begin{proof}
 The connection between the present proposition and the previous one
fits into the setting described in
Proposition~\ref{accessible-subcategory}.
 The assertions for $\kappa=\aleph_0$ are corollaries of the classical
Govorov--Lazard theorem~\cite{Gov,Laz} characterizing the flat
$R$\+modules as the directed colimits of finitely generated projective
(or free) $R$\+modules.
 The general case of an arbitrary regular cardinal~$\kappa$ can be
deduced by applying~\cite[Theorem~2.11 and Example~2.13(1)]{AR}.
\end{proof}

 For a version of Proposition~\ref{flat-modules-accessible} for
modules of bounded flat dimension, see~\cite[Corollary~5.2]{Pres}.

\subsection{Diagrams of flat modules}
 The following two corollaries are our ``toy applications'' of
Theorem~\ref{k-linear-diagram-theorem}.

\begin{cor} \label{flat-modules-diagrams}
 Let $k$~be a commutative ring and $R$ be an associative, unital
$k$\+algebra.
 Let $\kappa$~be an uncountable regular cardinal and $\sA$ be
a $\kappa$\+presented $k$\+linear category (in the sense of
Section~\ref{diagram-secn}).
 Then any $k$\+linear functor\/ $\sA\rarrow R\Modl_\fl$ is
a $\kappa$\+directed colimit of $k$\+linear functors\/ $\sA\rarrow
R\Modl_{\fl,<\kappa}$ into the category of $\kappa$\+presentable flat
left $R$\+modules $R\Modl_{\fl,<\kappa}$.
\end{cor}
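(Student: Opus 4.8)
The plan is to apply Theorem~\ref{k-linear-diagram-theorem} directly, with $\sK=R\Modl_\fl$ playing the role of the target $k$\+linear category. That theorem requires an infinite cardinal $\lambda<\kappa$ such that colimits of $\lambda$\+indexed chains exist in $\sK$; since $\kappa$ is assumed \emph{uncountable}, the choice $\lambda=\aleph_0$ is available, and this is precisely the point where the uncountability hypothesis on~$\kappa$ enters.

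First I would check that $\sK=R\Modl_\fl$ meets all the hypotheses. It is a $k$\+linear category because $R$ is a $k$\+algebra, so $R\Modl$ is enriched in $k$\+modules and $R\Modl_\fl$ is a full $k$\+linear subcategory. By Proposition~\ref{flat-modules-accessible}, the category $R\Modl_\fl$ is $\kappa$\+accessible and all directed colimits exist in it; in particular, colimits of $\aleph_0$\+indexed chains exist, so the remaining hypothesis of Theorem~\ref{k-linear-diagram-theorem} holds with $\lambda=\aleph_0$. The category $\sA$ is $\kappa$\+presented by assumption, as required.

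With the hypotheses in place, Theorem~\ref{k-linear-diagram-theorem} asserts that $\Fun_k(\sA,R\Modl_\fl)$ is $\kappa$\+accessible and that its full subcategory of $\kappa$\+presentable objects is $\Fun_k(\sA,(R\Modl_\fl)_{<\kappa})$. By the definition of a $\kappa$\+accessible category, every object of $\Fun_k(\sA,R\Modl_\fl)$, i.~e., every $k$\+linear functor $\sA\rarrow R\Modl_\fl$, is then a $\kappa$\+directed colimit of such $\kappa$\+presentable objects, i.~e., of $k$\+linear functors $\sA\rarrow(R\Modl_\fl)_{<\kappa}$. Finally, Proposition~\ref{flat-modules-accessible} identifies $(R\Modl_\fl)_{<\kappa}$ with $R\Modl_{\fl,<\kappa}$, the flat modules that are $\kappa$\+presentable in $R\Modl$, which yields the claimed description. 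There is no genuine obstacle here: the result amounts to assembling the cited statements, and the only point demanding attention is the verification that colimits of $\lambda$\+indexed chains exist for some infinite $\lambda<\kappa$, which is exactly what the uncountability of $\kappa$ secures.
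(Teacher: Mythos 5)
Your proof is correct and matches the paper's own argument exactly: both invoke Theorem~\ref{k-linear-diagram-theorem} with $\lambda=\aleph_0$ (which is where uncountability of~$\kappa$ is used) and Proposition~\ref{flat-modules-accessible} to verify the hypotheses and identify the $\kappa$\+presentable objects. Nothing to add.
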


\begin{proof}
 By Proposition~\ref{flat-modules-accessible} and
Theorem~\ref{k-linear-diagram-theorem} (with $\lambda=\aleph_0$),
the $k$\+linear functor/diagram category $\Fun_k(\sA,R\Modl_\fl)$
is $\kappa$\+accessible, and $\Fun_k(\sA,R\Modl_{\fl,<\kappa})$
is its full subcategory of $\kappa$\+presentable objects.
\end{proof}

\begin{cor} \label{flat-modules-complexes}
 Let $R$ be an associative ring and $\kappa$~be an uncountable
regular cardinal.
 Then any cochain complex of flat $R$\+modules is a $\kappa$\+directed
colimit of complexes of $\kappa$\+presentable flat $R$\+modules.
\end{cor}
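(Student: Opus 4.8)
The plan is to recognize a cochain complex of flat $R$\+modules as precisely a $k$\+linear functor out of a suitable $\kappa$\+presented category, and then to invoke Theorem~\ref{k-linear-diagram-theorem} directly. Since $R$ is merely an associative ring, I would first take $k=\boZ$: every additive category, and in particular $R\Modl$ and its full subcategory $R\Modl_\fl$, is automatically $\boZ$\+linear, so the $k$\+linear machinery of Section~\ref{diagram-secn} applies with no loss of generality.

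Next I would build the indexing category. Let $\sA$ be the $\boZ$\+linear category presented (in the sense of Section~\ref{diagram-secn}) by the set of objects $\{\,n\mid n\in\boZ\,\}$, one generating morphism $d^n\:n\rarrow n+1$ for each integer~$n$, and the defining relations $d^{n+1}\circ d^n=0$ for each~$n$. This is the ``path category of the doubly infinite $A$\+type quiver with the differential relations''. Because $\kappa$ is uncountable, the sets of objects, of generators, and of relations are all countable and hence of cardinality $<\kappa$; thus $\sA$ is $\kappa$\+presented. A $\boZ$\+linear functor $E\:\sA\rarrow R\Modl_\fl$ is then exactly the datum of a flat module $E(n)$ for each~$n$ together with maps $E(d^n)\:E(n)\rarrow E(n+1)$ satisfying $E(d^{n+1})E(d^n)=0$, i.e.\ a cochain complex of flat $R$\+modules; and morphisms of functors are cochain maps. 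In other words, $\Fun_{\boZ}(\sA,R\Modl_\fl)$ is the category of cochain complexes of flat $R$\+modules.

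With this identification in hand, I would apply Theorem~\ref{k-linear-diagram-theorem} with $\lambda=\aleph_0$. The hypotheses are met: $R\Modl_\fl$ is $\kappa$\+accessible and has all directed colimits, in particular colimits of $\aleph_0$\+indexed chains, by Proposition~\ref{flat-modules-accessible}, and $\aleph_0<\kappa$ precisely because $\kappa$ is uncountable. The theorem yields that $\Fun_{\boZ}(\sA,R\Modl_\fl)$ is $\kappa$\+accessible and that its $\kappa$\+presentable objects are exactly the functors landing in $R\Modl_{\fl,<\kappa}$, i.e.\ the complexes all of whose terms are $\kappa$\+presentable flat modules (equivalently, $\kappa$\+presentable in $R\Modl$, again by Proposition~\ref{flat-modules-accessible}). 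Unwinding the definition of $\kappa$\+accessibility then gives precisely the claim: every complex of flat modules is a $\kappa$\+directed colimit of complexes of $\kappa$\+presentable flat modules, the colimit being computed termwise.

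I do not expect a genuine obstacle here; the content is entirely in the reduction, and the residual work is bookkeeping. The only points that require a moment's care are the use of $k=\boZ$ (legitimate since the categories in sight are additive), the verification that the $\kappa$\+directed colimit in the functor category agrees with the naive termwise colimit of complexes (immediate, as $\kappa$\+directed colimits in $\Fun_{\boZ}(\sA,R\Modl_\fl)$ are computed pointwise and inherited from $R\Modl_\fl$), and the role of the uncountability of $\kappa$, which is exactly what makes the countably many objects, generators, and relations of $\sA$ fit under the $\kappa$\+presentedness bound. Had $\kappa=\aleph_0$, this last step would fail, which is why the hypothesis excludes that case.
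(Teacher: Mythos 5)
Your proposal is correct and follows essentially the same route as the paper: the paper deduces the corollary from Corollary~\ref{flat-modules-diagrams} (itself just Theorem~\ref{k-linear-diagram-theorem} plus Proposition~\ref{flat-modules-accessible} with $\lambda=\aleph_0$) applied to $k=\boZ$ and exactly the same $\kappa$\+presented $\boZ$\+linear category $\sA$ with objects the integers, generators $d_n\:n\rarrow n+1$, and relations $d_{n+1}d_n=0$. Your direct invocation of Theorem~\ref{k-linear-diagram-theorem}, and your remarks on uncountability of~$\kappa$ and termwise colimits, match the paper's argument in substance.
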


\begin{proof}
 This is the particular case of Corollary~\ref{flat-modules-diagrams}
for the ring $k=\boZ$ and the suitable choice of additive category
$\sA$ describing cochain complexes.
 The objects of $\sA$ are the integers $n\in\boZ$, the set of
generating morphisms is the singleton $\Gen(n,m)=\{d_n\}$ for $m=n+1$
and the empty set otherwise, and the set of defining relations is
the singleton $\Rel(n,m)=\{d_{n+1}d_n\}$ for $m=n+2$ and the empty set
otherwise.
\end{proof}

 For a quasi-coherent sheaf, a comodule, and a contramodule version of
Corollary~\ref{flat-modules-complexes},
see~\cite[Theorem~4.1]{PS6} and~\cite[Propositions~3.3 and~10.2]{Pflcc}.

\begin{rem}
 For an uncountable regular cardinal~$\kappa$, the complexes of
$\kappa$\+presentable $R$\+modules are precisely all
the $\kappa$\+presentable objects of the locally finitely
presentable (hence locally $\kappa$\+presentable) abelian category
of complexes of $R$\+modules.
 For $\kappa=\aleph_0$, the finitely presentable objects of
the category of complexes of $R$\+modules are the \emph{bounded}
complexes of finitely presentable $R$\+modules.

 Notice that \emph{not} every complex of flat $R$\+modules is
a directed colimit of bounded complexes of finitely presentable flat
(i.~e., finitely generated projective) $R$\+modules.
 In fact, the directed colimits of bounded complexes of finitely
generated projective $R$\+modules are the \emph{homotopy flat}
complexes of flat $R$\+modules~\cite[Theorem~1.1]{CH}.

 Using the argument from~\cite[proof of
Theorem~2.11\,(iv)\,$\Rightarrow$\,(i)]{AR}
(for $\lambda=\aleph_0$ and $\mu=\kappa$), one can deduce
the assertion that any homotopy flat complex of flat $R$\+modules
is a $\kappa$\+directed colimit of homotopy flat complexes of
$\kappa$\+presentable flat $R$\+modules, for any uncountable
regular cardinal~$\kappa$.
 A quasi-coherent sheaf version of this observation can be found
in~\cite[Theorem~4.5]{PS6}.
\end{rem}

\subsection{Categories of epimorphisms}
 For any category $\sK$, we denote by $\sK^\to$ the category of
morphisms in~$\sK$ (with commutative squares in $\sK$
as morphisms in~$\sK^\to$).
 The following lemma is not difficult.

\begin{lem} \label{arrow-category}
 For any regular cardinal~$\kappa$ and $\kappa$\+accessible
category\/ $\sK$, the category of morphisms\/ $\sK^\to$ is
$\kappa$\+accessible.
 The full subcategory of $\kappa$\+presentable objects in\/ $\sK^\to$
is the category $(\sK_{<\kappa})^\to$ of morphisms of
$\kappa$\+presentable objects in\/~$\sK$.
\end{lem}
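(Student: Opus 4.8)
The plan is to prove both statements at once by producing an explicit generating set of $\kappa$\+presentable objects in $\sK^\to$. First I would record the routine preliminaries. Because $\kappa$\+directed colimits exist in $\sK$, they exist in $\sK^\to$ and are computed separately at the source and at the target; consequently the source and target functors $s,t\:\sK^\to\rightrightarrows\sK$ preserve them and jointly reflect them (a cocone in $\sK^\to$ over a $\kappa$\+filtered diagram is colimiting precisely when its images under $s$ and~$t$ are). Next I would verify the inclusion $(\sK_{<\kappa})^\to\subseteq(\sK^\to)_{<\kappa}$. For a fixed target object $(g\:C\rarrow D)$, the set $\Hom_{\sK^\to}((f\:A\rarrow B),(g))$ is the fibre product of the two maps $\Hom_\sK(A,C)\rarrow\Hom_\sK(A,D)\larrow\Hom_\sK(B,D)$ given by post-composition with $g$ and pre-composition with~$f$. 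When $A$,~$B\in\sK_{<\kappa}$, each of the three hom-functors preserves $\kappa$\+directed colimits in the variable $(g)$, and since finite limits commute with $\kappa$\+directed colimits in $\Sets$, so does the fibre product; hence $(f)$ is $\kappa$\+presentable. I emphasize that this argument, and the rest of the proof, uses no cardinal~$\lambda$ and is valid for every regular $\kappa$, including $\kappa=\aleph_0$.

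Conceptually $\sK^\to$ is the inserter of the two projections $\sK\times\sK\rightrightarrows\sK$, so one is tempted to invoke Theorem~\ref{inserter-theorem}; but its standing hypotheses (an infinite $\lambda<\kappa$ and colimits of $\lambda$\+indexed chains) are deliberately absent here, so instead I would rerun the relevant portion of that argument and observe that it collapses. The reason it collapses --- and the reason no $\lambda$\+chain is ever needed --- is that the two functors act on \emph{independent} factors of $\sK\times\sK$: enlarging a chosen $\kappa$\+presentable approximation of the target~$B$ does not disturb the chosen approximation of the source~$A$, so the iterated construction of Lemma~\ref{inserter-main-lemma} reduces to a single factorization. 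Concretely, I would fix a representative set $\sS$ of the $\kappa$\+presentable objects of $\sK$, let $\sS^\to$ be the set of arrows between objects of~$\sS$, and form the indexing category $\Delta$ whose objects are commutative squares with top edge some $(u\:S\rarrow S')\in\sS^\to$ and bottom edge $(f\:A\rarrow B)$, and whose morphisms are the evident compatible pairs of maps; the tautological diagram sends such a square to its top edge.

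The main obstacle is to show that $\Delta$ is $\kappa$\+filtered and that this diagram has colimit $(f)$ in $\sK^\to$. For $\kappa$\+filteredness I would take fewer than $\kappa$ squares (and, for the second filteredness axiom, fewer than $\kappa$ parallel pairs of morphisms between them), bound all their sources by a single $S\rarrow A$ and all their targets by a single $S'\rarrow B$ inside the $\kappa$\+filtered canonical diagrams of~$A$ and of~$B$ (Lemma~\ref{accessible-canonical-colimit}), and then use the one factorization fact that a morphism out of the $\kappa$\+presentable~$S$ into the $\kappa$\+filtered colimit~$B$ factors through a stage in order to produce the bounding square; crucially no iteration is required. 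To identify the colimit I would use the joint reflection by $s$ and~$t$: the source colimit is $A$ because the projection $\Delta\rarrow\Delta_A$ to the canonical diagram of $A$ is cofinal, and the delicate point is the target, where I must check that the projection $\Delta\rarrow\Delta_B$ is cofinal as well. Here the subtlety is that the squares constrain each target approximation to receive the image of a source approximation, so not every object $(S'_*\rarrow B)$ of $\Delta_B$ is literally hit; the resolution is that any such $\kappa$\+presentable $S'_*\rarrow B$ can be absorbed into the target of an existing square by passing to a common upper bound inside the $\kappa$\+filtered diagram $\Delta_B$, which supplies the cofinality. Granting this, $\sK^\to$ is $\kappa$\+accessible with generating set $\sS^\to$. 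Finally, since $s$ and~$t$ carry any retract of $(f)$ to retracts of $A$ and~$B$, and retracts of $\kappa$\+presentable objects are $\kappa$\+presentable, the subcategory $(\sK_{<\kappa})^\to$ is closed under retracts; the standard description of the $\kappa$\+presentable objects as the retracts of the generators~$\sS^\to$ (\cite[Remarks~1.9 and~2.2(4)]{AR}) then yields exactly $(\sK^\to)_{<\kappa}=(\sK_{<\kappa})^\to$, as claimed.
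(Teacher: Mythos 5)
Your argument is correct, but it takes a genuinely different route from the paper. The paper's proof of Lemma~\ref{arrow-category} is a one-line reduction: $\sK^\to=\Fun(\sC,\sK)$ for the finite category $\sC=(\bullet\to\bullet)$, which has no nonidentity endomorphisms and is therefore among the well-founded $\kappa$\+small shapes for which the ``ideal'' description of $\Fun(\sC,\sK)$ holds for \emph{every} $\kappa$\+accessible $\sK$; the paper then simply cites \cite[Expos\'e~I, Proposition~8.8.5]{SGA4}, \cite[page~55]{Mey}, or \cite[Theorem~1.3]{Hen}. You instead reprove this special case from scratch: you exhibit the canonical $\kappa$\+filtered diagram of commutative squares over a given arrow and verify filteredness, cofinality of the two projections to $\Delta_A$ and $\Delta_B$, and closure under retracts by hand. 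Your diagnosis of \emph{why} no auxiliary cardinal $\lambda$ and no $\lambda$\+indexed chain are needed --- the two functors in the inserter presentation of $\sK^\to$ act on independent factors of $\sK\times\sK$, so enlarging the target approximation never disturbs the source approximation and the iteration of Lemma~\ref{inserter-main-lemma} degenerates to a single factorization step --- is exactly the structural reason the shape $\bullet\to\bullet$ is well-founded in the sense of \cite{Hen}, so your proof can be read as an unwinding of the cited result in this case. What the paper's approach buys is brevity and a uniform statement for all such shapes; what yours buys is self-containedness (no appeal to \cite{SGA4,Mey,Hen}) and an explicit generating set $\sS^\to$. The only places where you are terse are the verification of condition~(b) of cofinality for the projections $\Delta\rarrow\Delta_A$ and $\Delta\rarrow\Delta_B$ and the coequalization of parallel pairs in the filteredness check; both are handled by the same ``agree after composing to $B$, hence coequalized at a later stage of $\Delta_B$'' device you already use, so these are matters of writing rather than gaps.
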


\begin{proof}
 One has $\sK^\to=\Fun(\sC,\sK)$ for the obvious finite category~$\sC$
with no nonidentity endomorphisms; so the result
of~\cite[Expos\'e~I, Proposition~8.8.5]{SGA4}, \cite[page~55]{Mey},
or~\cite[Theorem~1.3]{Hen} is applicable.
\end{proof}

 For any category $\sK$, let us denote by $\sK^\epi\subset\sK^\to$
the full subcategory whose objects are all the epimorphisms in~$\sK$.

\begin{lem} \label{loc-pres-epimorphism-category}
 For any regular cardinal~$\kappa$ and any locally $\kappa$\+presentable
abelian category\/ $\sK$, the category of epimorphisms\/ $\sK^\epi$ is
locally $\kappa$\+presentable.
 The full subcategory of $\kappa$\+presentable objects in $\sK^\epi$ is
the category $(\sK_{<\kappa})^\epi$ of epimorphisms between
$\kappa$\+presentable objects in\/~$\sK$.
\end{lem}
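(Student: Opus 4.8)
The plan is to realize $\sK^\epi$ as the full subcategory of the arrow category $\sK^\to$ generated under $\kappa$-directed colimits by the epimorphisms between $\kappa$-presentable objects, and then to invoke Proposition~\ref{accessible-subcategory}. By Lemma~\ref{arrow-category} the category $\sK^\to$ is $\kappa$-accessible with $(\sK_{<\kappa})^\to$ as its $\kappa$-presentable objects; since $\sK$ is cocomplete, so is $\sK^\to$, hence $\sK^\to$ is locally $\kappa$-presentable. The cokernel functor $\operatorname{coker}\colon\sK^\to\rarrow\sK$ is a left adjoint (its right adjoint sends $C$ to the arrow $0\rarrow C$), so it preserves all colimits; and it carries $(\sK_{<\kappa})^\to$ into $\sK_{<\kappa}$, because a cokernel is a finite colimit and $\sK_{<\kappa}$ is closed under $\kappa$-small colimits. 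As $\sK^\epi=\{E\in\sK^\to:\operatorname{coker}(E)=0\}$ and $\operatorname{coker}$ preserves colimits, any colimit of epimorphisms is again an epimorphism; thus $\sK^\epi$ is closed under all colimits in $\sK^\to$, in particular under $\kappa$-directed ones, and $\sK^\epi$ is cocomplete.

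Next I would take $\sS\subset\sK^\to$ to be a set of representatives of the isomorphism classes of objects of $(\sK_{<\kappa})^\epi$ (these lie in $(\sK_{<\kappa})^\to$, hence are $\kappa$-presentable in $\sK^\to$) and apply Proposition~\ref{accessible-subcategory} inside the $\kappa$-accessible category $\sK^\to$. This gives at once that $\varinjlim_{(\kappa)}\sS$ is $\kappa$-accessible, with $\kappa$-presentable objects the retracts in $\sK^\to$ of objects of $\sS$. Since $\operatorname{coker}$ of a retract of $(g\colon U\rarrow V)$ is a retract of $\operatorname{coker}(g)=0$, a retract of an epimorphism is an epimorphism; and retracts of $\kappa$-presentable objects are $\kappa$-presentable. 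Hence the class of retracts of $\sS$ is exactly $(\sK_{<\kappa})^\epi$. The inclusion $\varinjlim_{(\kappa)}\sS\subseteq\sK^\epi$ holds because $\sS\subseteq\sK^\epi$ and $\sK^\epi$ is closed under $\kappa$-directed colimits.

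The main work, and the one place where the abelian and local-presentability hypotheses are genuinely used, is the reverse inclusion $\sK^\epi\subseteq\varinjlim_{(\kappa)}\sS$. By the factorization criterion in Proposition~\ref{accessible-subcategory}, it suffices to show that for an epimorphism $e\colon A\rarrow B$ every morphism in $\sK^\to$ from a $\kappa$-presentable object $(t\colon P\rarrow Q)$, with $P,Q\in\sK_{<\kappa}$, factors through an epimorphism between $\kappa$-presentable objects. Such a morphism is a commutative square with legs $p\colon P\rarrow A$ and $q\colon Q\rarrow B$ satisfying $ep=qt$. I would first write $B=\varinjlim_{j}B_j$ as a $\kappa$-directed colimit of $\kappa$-presentable objects and factor $q$ through some $B_{j_0}$; then form the pullback $A'=A\times_B B_{j_0}$, whose projection $A'\rarrow B_{j_0}$ is an epimorphism because the pullback of an epimorphism is an epimorphism in any abelian category, and observe that $p$ and $q_0t$ define a canonical map $P\rarrow A'$. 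Writing $A'=\varinjlim_k A'_k$ as a $\kappa$-directed colimit of $\kappa$-presentable objects, the key input is that $\kappa$-directed colimits are exact in a locally $\kappa$-presentable abelian category, so that the image of $A'\rarrow B_{j_0}$ is the $\kappa$-directed union of the images of the $A'_k\rarrow B_{j_0}$. Since this union is all of $B_{j_0}$ and $B_{j_0}$ is $\kappa$-presentable, the identity of $B_{j_0}$ factors through one of these images, forcing some $A'_{k_0}\rarrow B_{j_0}$ to be epic; enlarging the index so that $P\rarrow A'$ also factors through $A'_{k_1}$, I obtain the desired epimorphism $U=A'_{k_1}\rarrow B_{j_0}=V$ between $\kappa$-presentable objects, together with the required factorization of the original square.

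Combining these steps, $\sK^\epi=\varinjlim_{(\kappa)}\sS$ is $\kappa$-accessible with $(\sK_{<\kappa})^\epi$ as its full subcategory of $\kappa$-presentable objects, and since $\sK^\epi$ is cocomplete it is locally $\kappa$-presentable. The step I expect to be the main obstacle is precisely this factorization, and within it the use of exactness of $\kappa$-directed colimits in $\sK$ (so that taking images commutes with them); this exactness holds because in a locally $\kappa$-presentable category $\kappa$-directed colimits and finite limits are both computed pointwise and therefore commute, but it is the essential feature distinguishing this abelian statement from the purely accessible results proved earlier, where the analogous factorization instead required the more delicate $\lambda$-indexed chain construction.
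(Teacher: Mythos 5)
Your proposal is correct and follows essentially the same route as the paper's proof: reduce via Lemma~\ref{arrow-category} and the factorization criterion of Proposition~\ref{accessible-subcategory} to showing that a square from a $\kappa$\+presentable arrow to an epimorphism factors through an epimorphism of $\kappa$\+presentables, then pull the epimorphism back (pullbacks of epimorphisms are epimorphisms in an abelian category), and use exactness of $\kappa$\+filtered colimits plus $\kappa$\+presentability of the target to split off a $\kappa$\+presentable subobject mapping onto it. The only (immaterial) difference is that the paper pulls back directly along the codomain $T$ of the presentable arrow, whereas you first interpose a presentable approximation $B_{j_0}$ of the codomain of the epimorphism.
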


\begin{proof}
 Notice first of all that a morphism in $\sK_{<\kappa}$ is
an epimorphism in $\sK_{<\kappa}$ if and only if it is an epimorphism
in~$\sK$ (because the full subcategory $\sK_{<\kappa}$ is closed
under cokernels in~$\sK$ \,\cite[Proposition~1.16]{AR}).
 Furthermore, the full subcategory $\sK^\epi$ is closed under colimits
in the locally presentable abelian category $\sK^\to$; so all colimits
exist in~$\sK^\epi$.
 In view of Lemma~\ref{arrow-category} and according to
Proposition~\ref{accessible-subcategory}, in order to prove the lemma
it suffices to check that any morphism from an object of
$(\sK_{<\kappa})^\to$ to an object of $\sK^\epi$ factorizes through
an object of $(\sK_{<\kappa})^\epi$ in~$\sK^\to$.

 Indeed, consider a commutative square diagram in~$\sK$
$$
 \xymatrix{
  S \ar[r] \ar[d] & K \ar@{->>}[d] \\
  T \ar[r] & L
 }
$$
with an epimorphism $K\twoheadrightarrow L$ and objects
$S$, $T\in\sK_{<\kappa}$.
 Let $M$ be the pullback of the pair of morphisms $K\rarrow L$
and $T\rarrow L$ in~$\sK$; then $M\rarrow T$ is also an epimorphism
(since the category $\sK$ is assumed to be abelian).
$$
 \xymatrix{
  S \ar[r] \ar[d] & M \ar[r] \ar@{->>}[d] & K \ar@{->>}[d] \\
  T \ar@{=}[r] & T \ar[r] & L
 }
$$

 Let $M=\varinjlim_{\xi\in\Xi}U_\xi$ be a representation of $M$
as a $\kappa$\+filtered colimit of $\kappa$\+presentable objects
$U_\xi$ in~$\sK$, and let $V_\xi$ denote the images of
the compositions $U_\xi\rarrow M\rarrow T$.
 The $\kappa$\+filtered colimits are exact functors in~$\sK$
\,\cite[Proposition~1.59]{AR}; hence we have
$T=\varinjlim_{\xi\in\Xi}V_\xi$.
 Since $T\in\sK_{<\kappa}$, it follows that there exists $\xi_0\in\Xi$
such that the morphism $V_{\xi_0}\rarrow T$ is a retraction
(as $V_{\xi_0}\rarrow T$ is a monomorphism by construction, this means
that $V_{\xi_0}\rarrow T$ is actually an isomorphism).
 Hence the composition $U_{\xi_0}\rarrow M\rarrow T$ is an epimorphism.
 Since $S\in\sK_{<\kappa}$, one can choose an index $\xi_1\in\Xi$
together with an arrow $\xi_0\rarrow\xi_1$ in $\Xi$ such that
the morphism $S\rarrow M$ factorizes through the morphism
$U_{\xi_1}\rarrow M$.
 Hence we arrive to the desired factorization
$$
 \xymatrix{
  S \ar[r] \ar[d] & U_{\xi_1} \ar[r] \ar@{->>}[d] & K \ar@{->>}[d] \\
  T \ar@{=}[r] & T \ar[r] & L
 }
$$
through an object $(U_{\xi_1}\to T)\in(\sK_{<\kappa})^\epi$.
\end{proof}

\begin{rem}
 It follows immediately from the first assertion of
Lemma~\ref{loc-pres-epimorphism-category} that the category
$\sK^\mono$ of monomorphisms in $\sK$ is also locally
$\kappa$\+presentable.
 In fact, the categories $\sK^\epi$ and $\sK^\mono$ are naturally
equivalent; the functors of the kernel of an epimorphism and
the cokernel of a monomorphism provide the equivalence.
 However, the direct analogue of the second assertion of
Lemma~\ref{loc-pres-epimorphism-category} \emph{fails} for
monomorphisms (even though the full subcategory $\sK^\mono\subset
\sK^\to$ is closed under $\kappa$\+directed colimits
by~\cite[Proposition~1.59]{AR}).
 In fact, a monomorphism~$i$ in $\sK$ is a $\kappa$\+directed colimit
of monomorphisms between $\kappa$\+presentable objects if and only
if $i$~is an admissible monomorphism in the \emph{maximal locally
$\kappa$\+coherent exact structure on\/~$\sK$}
\,\cite[Corollary~3.3]{Plce}.
 In particular, if $R$ is an associative ring that is not left
coherent, then any monomorphism $i\:N\rarrow M$ from a finitely
generated but not finitely presentable left $R$\+module $N$ to
a finitely presentable left $R$\+module $M$ is \emph{not} a directed
colimit of monomorphisms of finitely presentable modules in $R\Modl$.
\end{rem}

 Given a ring $R$ and a full subcategory $\sL\subset R\Modl$, we
denote by $\sL^\srj\subset\sL^\to$ the full subcategory whose
objects are all the surjective morphisms between objects of~$\sL$.

\begin{lem} \label{flat-adm-epi-category}
 For any associative ring $R$ and any regular cardinal~$\kappa$,
the category of surjective morphisms of flat $R$\+modules
$R\Modl_\fl^\srj$ is $\kappa$\+accessible.
 The $\kappa$\+presentable objects of $R\Modl_\fl^\srj$ are
the surjective morphisms of $\kappa$\+presentable flat $R$\+modules.
\end{lem}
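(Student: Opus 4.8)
The plan is to realize $R\Modl_\fl^\srj$ as a full subcategory of the arrow category $(R\Modl_\fl)^\to$ and to apply Proposition~\ref{accessible-subcategory}. First I would note that $R\Modl_\fl$ is $\kappa$\+accessible by Proposition~\ref{flat-modules-accessible}, so Lemma~\ref{arrow-category} tells us that $\sK=(R\Modl_\fl)^\to$ is $\kappa$\+accessible and that its $\kappa$\+presentable objects are exactly the morphisms between $\kappa$\+presentable flat modules. The full subcategory $R\Modl_\fl^\srj\subset\sK$ of surjective arrows is closed under $\kappa$\+directed colimits, because such colimits in $R\Modl_\fl$ agree with those in $R\Modl$ (Proposition~\ref{flat-modules-accessible}), are computed componentwise in $\sK$, and are exact in $R\Modl$, hence preserve surjectivity (and colimits of flat modules are flat). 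Let $\sS\subset\sK$ be a set of representatives of the isomorphism classes of surjections between $\kappa$\+presentable flat modules; these lie in $\sK_{<\kappa}$. By Proposition~\ref{accessible-subcategory} it then suffices to prove that $\varinjlim_{(\kappa)}\sS=R\Modl_\fl^\srj$, and since the inclusion $\subseteq$ is the closure statement just noted, the entire content is the reverse inclusion, which reduces to a single factorization property.

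Concretely, I must show that for every object $(K\twoheadrightarrow L)\in R\Modl_\fl^\srj$ and every morphism into it from an object $(S\to T)\in\sK_{<\kappa}$ --- that is, a commutative square with $S$, $T$ $\kappa$\+presentable flat, $f\:S\to K$, $g\:T\to L$, vertical maps $s\:S\to T$ and $p\:K\twoheadrightarrow L$, and $pf=gs$ --- the square factorizes through a surjection $(U\twoheadrightarrow W)$ between $\kappa$\+presentable flat modules. This is the exact analogue of the factorization established in Lemma~\ref{loc-pres-epimorphism-category}, and it is the \emph{main obstacle}: the proof there used the pullback of $K\to L\leftarrow T$ together with kernels and images inside the abelian category, but those operations destroy flatness and are unavailable here.

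To carry out the factorization within flat modules, I would argue as follows. Since $L$ is flat and $\kappa$\+accessible, $g\:T\to L$ factors as $T\to W\to L$ through some $\kappa$\+presentable flat module $W$ (a term of a $\kappa$\+directed presentation $L=\varinjlim W_\eta$). A $\kappa$\+presentable module is $\kappa$\+generated (Proposition~\ref{modules-locally-presentable}), so I may choose a free module $R^{(X)}$ with $|X|<\kappa$ and a surjection $R^{(X)}\twoheadrightarrow W$; being a $\kappa$\+small coproduct of copies of the $\kappa$\+presentable flat module $R$, the module $R^{(X)}$ is itself $\kappa$\+presentable and flat. As $R^{(X)}$ is projective and $p$ is surjective, the composite $R^{(X)}\to W\to L$ lifts to a map $R^{(X)}\to K$ over $L$. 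Now set $U=R^{(X)}\oplus S$, which is $\kappa$\+presentable and flat; let $U\to W$ be $(R^{(X)}\twoheadrightarrow W)\oplus(S\to T\to W)$, which is surjective, and let $U\to K$ be $(R^{(X)}\to K)\oplus f$. Taking $S\to U$ to be the inclusion of the second summand, one checks directly that $(S\to T)\to(U\twoheadrightarrow W)\to(K\twoheadrightarrow L)$ is the desired factorization: the left square commutes because $S\to U\to W$ equals $S\to T\to W$, while the right square commutes over $L$ by construction of the lift on the $R^{(X)}$ summand and by the identity $pf=gs$ on the $S$ summand. The direct-summand trick is precisely what lets me accommodate the source object $S$ without lifting $f$ along the (non-surjective) map $U\to K$.

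Finally, Proposition~\ref{accessible-subcategory} yields that $R\Modl_\fl^\srj=\varinjlim_{(\kappa)}\sS$ is $\kappa$\+accessible and that its $\kappa$\+presentable objects are precisely the retracts of objects of $\sS$. It then remains to observe that a retract in $\sK$ of a surjection between $\kappa$\+presentable flat modules is again such a surjection --- retracts are computed componentwise, the $\kappa$\+presentable flat modules are closed under retracts, and a componentwise retract of a surjection is a surjection --- while conversely every surjection between $\kappa$\+presentable flat modules is isomorphic to an object of $\sS$. Hence the $\kappa$\+presentable objects of $R\Modl_\fl^\srj$ are exactly the surjective morphisms of $\kappa$\+presentable flat modules, as claimed.
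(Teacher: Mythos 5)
Your proof is correct, and its overall skeleton (reduce via Lemma~\ref{arrow-category} and Proposition~\ref{accessible-subcategory} to a single factorization property for squares mapping a $\kappa$\+presentable arrow into a flat surjection) coincides with the paper's. But the heart of the argument --- producing the intermediate object of $(R\Modl_{\fl,<\kappa})^\srj$ --- is carried out by a genuinely different construction. The paper forms the pullback $M$ of $K\twoheadrightarrow L\leftarrow T$ in $R\Modl$, observes that $M$ \emph{is} flat (the kernel $F$ of a surjection of flat modules onto a flat module is flat, being a pure submodule of $K$, so $0\to F\to M\to T\to 0$ forces $M$ flat), writes $M$ as a $\kappa$\+filtered colimit of $\kappa$\+presentable flat modules, and uses images and exactness of filtered colimits to find a suitable $U_{\xi_1}\twoheadrightarrow T$. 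You instead factor $T\to L$ through a $\kappa$\+presentable flat $W$, cover $W$ by a $\kappa$\+generated free module $R^{(X)}$, lift $R^{(X)}\to L$ along $p$ by projectivity, and take $U=R^{(X)}\oplus S$; I have checked the commutativity of all the required squares and the construction works. Note, though, that your stated reason for abandoning the pullback route --- that pullbacks and kernels ``destroy flatness'' --- is not actually true here, as just explained; the paper's proof hinges precisely on the pullback remaining flat. As for what each approach buys: yours is arguably more elementary (no exactness of filtered colimits, no image computations, no flat-kernel observation), and it also keeps the target of the new arrow equal to a subobject-free choice $W$ rather than $T$ itself; the paper's pullback argument, by contrast, uses nothing about projectives and therefore transports to the sheaf, comodule, and contramodule settings that motivate the paper, where free covers in your sense are unavailable.
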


\begin{proof}
 The argument is similar to the proof of
Lemma~\ref{loc-pres-epimorphism-category}.
 In view of Proposition~\ref{flat-modules-accessible},
Lemma~\ref{arrow-category} is applicable to $\sK=R\Modl_\fl$;
so the category of morphisms of flat $R$\+modules $R\Modl_\fl^\to$
is $\kappa$\+accessible and the category of morphisms of
$\kappa$\+presentable flat $R$\+modules $R\Modl_{\fl,<\kappa}^\to$
is the full subcategory of $\kappa$\+presentable objects
in $R\Modl_\fl^\to$.
 According to Proposition~\ref{accessible-subcategory}, in order
to prove the lemma it suffices to check that any morphism from
an object of $R\Modl_{\fl,<\kappa}^\to$ to an object of
$R\Modl_\fl^\srj$ factorizes through an object of
$(R\Modl_{\fl,<\kappa})^\srj$.

 Following the proof of Lemma~\ref{loc-pres-epimorphism-category},
one needs to observe that if $K\twoheadrightarrow L$ is a surjective
morphism of flat $R$\+modules and $T\rarrow L$ is a morphism of
flat $R$\+modules, then the pullback~$M$ (computed in the category
$R\Modl$) is a flat $R$\+module.
 Indeed, the kernel $F$ of the morphism $K\rarrow L$ is a flat
$R$\+module, so the short exact sequence $0\rarrow F\rarrow M
\rarrow T\rarrow0$ shows that $M$ is a flat $R$\+module, too.
 The images $V_\xi$ of the morphisms $U_\xi\rarrow T$ can be taken
in the ambient abelian category $R\Modl$.
 Otherwise, the argument is the same, except that one considers
surjective morphisms in $R\Modl_\fl$ rather than epimorphisms in~$\sK$.
\end{proof}

\begin{rem}
 Alternatively, one can drop the assumption that the category $\sK$
is abelian in Lemma~\ref{loc-pres-epimorphism-category}, requiring it
only to be additive; but assume the cardinal~$\kappa$ to be
uncountable instead.
 Then the resulting assertion can be obtained as a particular case
of Corollary~\ref{pseudopullback-corollary}.
 Consider the category of morphisms $\sA=\sK^\to$, the zero
category $\sB=\{0\}$, and the category $\sC=\sK$.
 Let $\Theta_\sA\:\sA\rarrow\sC$ be the cokernel functor $f\longmapsto
\operatorname{coker}(f)$ and $\Theta_\sB\:\sB\rarrow\sC$ be
the zero functor.
 Then the pseudopullback $\sD$ is the category of epimorphisms
$\sD=\sK^\epi$.
 All the assumptions of Corollary~\ref{pseudopullback-corollary}
(with $\lambda=\aleph_0$) are satisfied; so the corollary tells
that $\sK^\epi$ is $\kappa$\+accessible and provides the desired
description of $\kappa$\+presentable objects.

 Similarly, assuming $\kappa$~to be uncountable, one can deduce
Lemma~\ref{flat-adm-epi-category} from Lemmas~\ref{arrow-category}
and~\ref{loc-pres-epimorphism-category} using
Corollary~\ref{pseudopullback-corollary}.
 Consider the category of $R$\+module epimorphisms $\sA=R\Modl^\epi$,
the category of morphisms of flat $R$\+modules $\sB=R\Modl_\fl^\to$,
and the category of $R$\+module morphisms $\sC=R\Modl^\to$.
 Let $\Theta_\sA\:\sA\rarrow\sC$ and $\Theta_\sB\:\sB\rarrow\sC$
be the natural inclusions.
 Then the pseudopullback $\sD$ is the category of surjective
morphisms of flat $R$\+modules $R\Modl_\fl^\srj$, and
Corollary~\ref{pseudopullback-corollary} is applicable.
\end{rem}

\subsection{Short exact sequences of flat modules}
 Now we can deduce the following three corollaries of
Lemma~\ref{flat-adm-epi-category}.

\begin{cor} \label{short-exact-of-flat-kappa-small-colimits}
 Let $R$ be an associative ring and $\kappa$~be a regular cardinal.
 Then any surjective morphism of $\kappa$\+presentable flat
$R$\+modules is a direct summand of a $\kappa$\+small directed colimit
of surjective morphisms of finitely generated projective $R$\+modules
(in the category $R\Modl_\fl^\to$).
\end{cor}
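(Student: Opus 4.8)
The plan is to do everything inside the $\kappa$\+accessible category $\sA=R\Modl_\fl^\srj$ provided by Lemma~\ref{flat-adm-epi-category}, and then to apply the standard description of $\kappa$\+presentable objects in a finitely accessible category. First I would invoke Lemma~\ref{flat-adm-epi-category} twice. For the given cardinal~$\kappa$, it identifies the surjective morphisms of $\kappa$\+presentable flat $R$\+modules with the $\kappa$\+presentable objects of~$\sA$. For $\kappa=\aleph_0$, the same lemma shows that $\sA$ is finitely accessible, with finitely presentable objects the surjective morphisms between finitely presentable flat $R$\+modules; by the Govorov--Lazard theorem recalled in the proof of Proposition~\ref{flat-modules-accessible}, these are exactly the surjective morphisms of finitely generated projective $R$\+modules. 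Thus the statement to be proved becomes: every $\kappa$\+presentable object of the finitely accessible category $\sA$ is a direct summand of a $\kappa$\+small directed colimit of finitely presentable objects of~$\sA$.

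The main step is then a general principle, valid in any finitely accessible category: the $\kappa$\+presentable objects are precisely the retracts of $\kappa$\+small directed colimits of finitely presentable objects. This rests on the fact that $\aleph_0\triangleleft\kappa$ for every regular cardinal~$\kappa$ (see \cite[Definition~2.12]{AR}). One inclusion is routine: a $\kappa$\+small directed colimit of finitely presentable (hence $\kappa$\+presentable) objects is again $\kappa$\+presentable, since $\kappa$\+presentable objects are closed under $\kappa$\+small colimits~\cite[Proposition~1.16]{AR}, and so is any of its retracts.

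The hard part is the reverse inclusion, and this is where I expect the main obstacle to lie. Given a $\kappa$\+presentable object $A\in\sA$, I would write it as a directed colimit $A=\varinjlim_{i\in I}A_i$ of finitely presentable objects, and present the directed poset $I$ as the $\kappa$\+directed union of its $\kappa$\+small directed sub-posets $I_s$ (this is exactly the point where $\aleph_0\triangleleft\kappa$ is used). Then $A=\varinjlim_s\bigl(\varinjlim_{i\in I_s}A_i\bigr)$ is a $\kappa$\+directed colimit, and since $\Hom_\sA(A,{-})$ preserves it, the identity morphism of~$A$ factorizes through some $B_s=\varinjlim_{i\in I_s}A_i$, exhibiting $A$ as a retract of the $\kappa$\+small directed colimit $B_s$. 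This is precisely the argument underlying \cite[proof of Theorem~2.11\,(iv)\,$\Rightarrow$\,(i)]{AR}, which I would cite rather than reproduce in full.

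Finally I would upgrade ``retract'' to ``direct summand''. The category $\sA=R\Modl_\fl^\srj$ is additive and has directed colimits, so idempotents split in it; hence a retract of an object is the same thing as a direct summand. This gives the assertion for every regular~$\kappa$. Note that for $\kappa=\aleph_0$ the statement is vacuous, since a $\kappa$\+small (that is, finite) directed colimit is attained at a top element, so a surjection of finitely generated projectives is trivially a direct summand of itself.
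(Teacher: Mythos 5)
Your proposal is correct and takes essentially the same route as the paper: both reduce the statement via Lemma~\ref{flat-adm-epi-category} (applied for $\aleph_0$, using Govorov--Lazard, and for~$\kappa$) to a claim about the finitely accessible category $R\Modl_\fl^\srj$, and then invoke the argument of \cite[proof of Theorem~2.11\,(iv)\,$\Rightarrow$\,(i) and Example~2.13(1)]{AR} with $\lambda=\aleph_0$ and $\mu=\kappa$ to exhibit every $\kappa$\+presentable object as a retract (equivalently, direct summand) of a $\kappa$\+small directed colimit of finitely presentable objects.
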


\begin{proof}
 This follows from Lemma~\ref{flat-adm-epi-category} in view
of~\cite[proof of Theorem~2.11\,(iv)\,$\Rightarrow$\,(i)]{AR} for
$\sK=R\Modl_\fl^\srj$, \ $\lambda=\aleph_0$, and $\mu=\kappa$.
 The Govorov--Lazard characterization of flat modules~\cite{Gov,Laz}
implies that all finitely presentable flat $R$\+modules are projective.
 By Lemma~\ref{flat-adm-epi-category}, the category of surjective
morphisms of flat $R$\+modules is finitely accessible, and
its finitely presentable objects are the surjective morphisms of
finitely generated projective $R$\+modules.
 So all surjective morphisms of flat $R$\+modules are directed
colimits of surjective morphisms of finitely generated projective
$R$\+modules.

 Let $\sA$ denote the set of all $\kappa$\+small directed colimits
of surjective morphisms of finitely generated projective
$R$\+modules.
 Following the argument
in~\cite[proof of Theorem~2.11\,(iv)\,$\Rightarrow$\,(i)]{AR}
and~\cite[Example~2.13(1)]{AR}, all the objects of $R\Modl_\fl^\srj$
are $\kappa$\+directed colimits of objects from~$\sA$.
 Thus all the $\kappa$\+presentable objects of $R\Modl_\fl^\srj$
are direct summands of objects from~$\sA$.
\end{proof}

 The next corollary is a generalization of~\cite[Lemma~4.1]{Pflcc}.

\begin{cor} \label{flat-kappa-coherence}
 Let $R$ be an associative ring and $\kappa$~be a regular cardinal.
 Then the kernel of any surjective morphism of $\kappa$\+presentable
flat $R$\+modules is a $\kappa$\+presentable flat $R$\+module.
\end{cor}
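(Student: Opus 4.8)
The plan is to reduce the statement to the structural description of surjections of flat modules already obtained in Corollary~\ref{short-exact-of-flat-kappa-small-colimits}, and then to transport it through the kernel functor. The key preliminary observation is that, for any associative ring $R$, directed colimits are exact in $R\Modl$ \cite[Proposition~1.59]{AR}, so the kernel functor $\ker\:R\Modl^\to\rarrow R\Modl$ commutes with directed colimits; being additive, it also preserves retracts (direct summands).

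Now let $f\:M\twoheadrightarrow N$ be a surjective morphism of $\kappa$\+presentable flat $R$\+modules (so that, by Lemma~\ref{flat-adm-epi-category}, $f$ is a $\kappa$\+presentable object of $R\Modl_\fl^\srj$). By Corollary~\ref{short-exact-of-flat-kappa-small-colimits}, $f$ is a direct summand, in $R\Modl_\fl^\to$, of a $\kappa$\+small directed colimit $\varinjlim_{\xi\in\Xi}(P_\xi\twoheadrightarrow Q_\xi)$ of surjective morphisms of finitely generated projective $R$\+modules. For each $\xi$, since $Q_\xi$ is projective, the surjection $P_\xi\rarrow Q_\xi$ splits; hence its kernel $K_\xi$ is a direct summand of $P_\xi$, and therefore is itself a finitely generated projective $R$\+module, in particular $\kappa$\+presentable and flat.

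Applying the kernel functor, I would then conclude that $\ker f$ is a retract, in $R\Modl$, of $\ker\bigl(\varinjlim_\xi(P_\xi\rarrow Q_\xi)\bigr)=\varinjlim_{\xi\in\Xi}K_\xi$, the equality holding by exactness of directed colimits. The module $\varinjlim_\xi K_\xi$ is flat, being a directed colimit of projectives, and it is $\kappa$\+presentable, being a $\kappa$\+small colimit of the $\kappa$\+presentable objects $K_\xi$ by~\cite[Proposition~1.16]{AR}. Since a retract of a flat module is flat, and a retract of a $\kappa$\+presentable object is again $\kappa$\+presentable, it follows that $\ker f$ is a $\kappa$\+presentable flat $R$\+module, as claimed.

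I do not expect a serious obstacle here, as the essential content is already carried by the earlier results. The only point requiring a little care is the simultaneous use of three facts about the kernel functor — that it commutes with the relevant $\kappa$\+small directed colimit, that it preserves the retract exhibiting $f$ as a summand, and that the kernels $K_\xi$ of surjections of finitely generated projectives are again finitely generated projective — after which closure of the class of $\kappa$\+presentable objects under $\kappa$\+small colimits and under retracts finishes the argument.
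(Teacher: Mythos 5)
Your proposal is correct and takes essentially the same route as the paper: the paper's proof likewise deduces the statement from Corollary~\ref{short-exact-of-flat-kappa-small-colimits} together with the observation that the kernel of a surjection of finitely generated projective modules is finitely generated projective. You have merely spelled out the details (exactness of directed colimits so that the kernel functor commutes with them and preserves the retract, plus closure of $\kappa$\+presentable objects under $\kappa$\+small colimits and retracts) that the paper leaves implicit.
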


\begin{proof}
 Follows from Corollary~\ref{short-exact-of-flat-kappa-small-colimits},
as the kernel of any surjective morphism of finitely generated
projective $R$\+modules is a finitely generated projective $R$\+module.
 For another proof, see~\cite[Corollary~4.7]{Plce}.
\end{proof}

 Given a ring $R$ and a full subcategory $\sL\subset R\Modl$, let us
denote by $\sL^\ses$ the category of all short exact sequences
in $R\Modl$ with the terms belonging to~$\sL$.

\begin{cor} \label{short-exact-of-flat-modules}
 For any associative ring $R$ and any regular cardinal~$\kappa$,
the category of short exact sequences of flat $R$\+modules
$R\Modl_\fl^\ses$ is $\kappa$\+accessible.
 The full subcategory of $\kappa$\+presentable objects of
$R\Modl_\fl^\ses$ is the category $(R\Modl_{\fl,<\kappa})^\ses$
of all short exact sequences of $\kappa$\+presentable flat
$R$\+modules.
\end{cor}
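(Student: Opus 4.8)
The plan is to identify $R\Modl_\fl^\ses$ with the category of surjective morphisms of flat modules $R\Modl_\fl^\srj$ from Lemma~\ref{flat-adm-epi-category}, and then transport the conclusions of that lemma across the equivalence, calling on Corollary~\ref{flat-kappa-coherence} to pin down the $\kappa$\+presentable objects.

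First I would observe that the datum of a short exact sequence of flat modules $0\rarrow A\rarrow B\rarrow C\rarrow0$ amounts to the same thing as the datum of its underlying surjection $B\twoheadrightarrow C$ of flat modules. Indeed, for a surjective morphism of flat $R$\+modules $B\twoheadrightarrow C$ the kernel $A$ is automatically flat: the long exact sequence of $\operatorname{Tor}^R(N,{-})$\+groups for an arbitrary right $R$\+module $N$ contains the fragment $\operatorname{Tor}^R_2(N,C)\rarrow\operatorname{Tor}^R_1(N,A)\rarrow\operatorname{Tor}^R_1(N,B)$, whose outer terms vanish because $B$ and $C$ are flat, forcing $\operatorname{Tor}^R_1(N,A)=0$. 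Hence the forgetful functor $R\Modl_\fl^\ses\rarrow R\Modl_\fl^\srj$ sending $(0\rarrow A\rarrow B\rarrow C\rarrow0)$ to $(B\twoheadrightarrow C)$ is essentially surjective. It is moreover fully faithful, since a commutative square relating two surjections determines a unique induced morphism of their kernels, and conversely a morphism of short exact sequences restricts to such a square. Thus this functor is an equivalence of categories.

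Next I would invoke Lemma~\ref{flat-adm-epi-category}, according to which $R\Modl_\fl^\srj$ is $\kappa$\+accessible with $\kappa$\+presentable objects exactly the surjective morphisms of $\kappa$\+presentable flat $R$\+modules. Since $\kappa$\+accessibility and the class of $\kappa$\+presentable objects are preserved by equivalences, it follows at once that $R\Modl_\fl^\ses$ is $\kappa$\+accessible and that its $\kappa$\+presentable objects are precisely the short exact sequences $0\rarrow A\rarrow B\rarrow C\rarrow0$ whose surjection $B\twoheadrightarrow C$ has $\kappa$\+presentable flat terms $B$ and~$C$. To see that this class is exactly $(R\Modl_{\fl,<\kappa})^\ses$, it only remains to note that in such a sequence the third term $A$ is also $\kappa$\+presentable, which is the content of Corollary~\ref{flat-kappa-coherence}; the converse inclusion is trivial, as any short exact sequence with all terms in $R\Modl_{\fl,<\kappa}$ has its surjection $B\twoheadrightarrow C$ running between $\kappa$\+presentable flat modules.

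The routine verifications---full faithfulness of the forgetful functor and invariance of $\kappa$\+accessibility under equivalence---cause no trouble. The one genuinely load-bearing ingredient is the matching of $\kappa$\+presentable objects: the flatness of the kernel is what makes the two categories coincide in the first place, and the ``$\kappa$\+coherence'' statement of Corollary~\ref{flat-kappa-coherence}---that the kernel of a surjection of $\kappa$\+presentable flat modules stays $\kappa$\+presentable---is exactly what prevents the third term $A$ from escaping $R\Modl_{\fl,<\kappa}$. This is the step I would watch most carefully, since it is the reason the description of the presentable objects comes out as cleanly as stated.
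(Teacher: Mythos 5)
Your proposal is correct and follows exactly the paper's own route: the paper's proof likewise observes that the obvious equivalence $R\Modl_\fl^\ses\simeq R\Modl_\fl^\srj$ together with Corollary~\ref{flat-kappa-coherence} reduces the statement to Lemma~\ref{flat-adm-epi-category}. You merely spell out in more detail the flatness of the kernel and the matching of $\kappa$\+presentable objects, which the paper leaves implicit.
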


\begin{proof}
 By Corollary~\ref{flat-kappa-coherence}, the obvious equivalence
of categories $R\Modl_\fl^\srj\simeq R\Modl_\fl^\ses$ identifies
$(R\Modl_{\fl,<\kappa})^\srj$ with $(R\Modl_{\fl,<\kappa})^\ses$.
 This makes the desired assertion a restatement of
Lemma~\ref{flat-adm-epi-category}.
\end{proof}

\subsection{Pure acyclic complexes of flat modules}
 Finally, we can present our ``toy application'' of
Corollary~\ref{pseudopullback-corollary}.
 An acyclic complex of flat $R$\+modules is said to be
\emph{pure acyclic} if its modules of cocycles are flat.

 The following corollary is essentially a weaker version of
the result of~\cite[Theorem~2.4\,(1)\,$\Leftrightarrow$\,(3)]{EG}
or~\cite[Theorem~8.6\,(ii)\,$\Leftrightarrow$\,(iii)]{Neem}.
 Our argument produces it as an application of general
category-theoretic principles.
 See~\cite[Theorem~4.2]{PS6} and~\cite[Corollaries~4.5 and~11.4]{Pflcc}
for a quasi-coherent sheaf, a comodule, and a contramodule version.

\begin{cor}
 Let $R$ be an associative ring and $\kappa$~be an uncountable
regular cardinal.
 Then any pure acyclic complex of flat $R$\+modules is
a $\kappa$\+directed colimit of pure acyclic complexes of
$\kappa$\+presentable flat $R$\+modules.
\end{cor}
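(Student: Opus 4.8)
The plan is to exhibit the category of pure acyclic complexes of flat $R$\+modules as an isomorpher (hence, by Remark~\ref{isomorpher-remark}, as a pseudopullback) assembled from the category of short exact sequences $R\Modl_\fl^\ses$, and then to apply Corollary~\ref{pseudopullback-corollary} with $\lambda=\aleph_0$. The crucial observation is that a pure acyclic complex of flat modules $\dots\rarrow C^{n-1}\rarrow C^n\rarrow C^{n+1}\rarrow\dots$ is the same datum as a $\boZ$\+indexed family of short exact sequences of flat modules $\sigma^n=(0\rarrow A^n\rarrow C^n\rarrow B^n\rarrow0)$, where $A^n=B^{n-1}$ is the ($n$\+th) flat module of cocycles, together with the gluing identifications $B^n\simeq A^{n+1}$. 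The differential of the complex is recovered as the composition $C^n\rarrow B^n\simeq A^{n+1}\rarrow C^{n+1}$, and conversely every such glued family comes from a unique pure acyclic complex.

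First I would put $\sH=\prod_{n\in\boZ}R\Modl_\fl^\ses$ and $\sG=\prod_{n\in\boZ}R\Modl_\fl$. Since $\kappa$~is uncountable, the index set $\boZ$ has cardinality $\aleph_0<\kappa$, so Proposition~\ref{product-proposition}, combined with Corollary~\ref{short-exact-of-flat-modules} and Proposition~\ref{flat-modules-accessible}, shows that $\sH$ and $\sG$ are $\kappa$\+accessible; moreover all directed colimits exist in them and are computed componentwise (and exactly), so in particular colimits of $\aleph_0$\+indexed chains exist. I would then define two parallel functors $P$, $Q\:\sH\rightrightarrows\sG$: the functor $P$ sends a family $(\sigma^n)_n$ to the family of cokernel terms $(B^n)_n$, while $Q$ sends it to the family of kernel terms of the next sequence, $(A^{n+1})_n$ (that is, the componentwise kernel functor precomposed with the shift of the product). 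By construction, the isomorpher of $P$ and $Q$ is equivalent to the category of pure acyclic complexes of flat modules, an isomorphism $P(H)\simeq Q(H)$ being exactly a compatible family of gluing identifications $B^n\simeq A^{n+1}$.

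Next I would verify the hypotheses of Remark~\ref{isomorpher-remark} and Corollary~\ref{pseudopullback-corollary}. Both $P$ and $Q$ are built from the kernel and cokernel functors $R\Modl_\fl^\ses\rarrow R\Modl_\fl$ (and a shift of the product), and since directed colimits are exact in $R\Modl$, these functors preserve $\kappa$\+directed colimits and colimits of $\aleph_0$\+indexed chains; by Corollary~\ref{short-exact-of-flat-modules} they also take $\kappa$\+presentable objects (short exact sequences of $\kappa$\+presentable flats) to $\kappa$\+presentable flat modules. Hence Corollary~\ref{pseudopullback-corollary} applies and tells us that the isomorpher $\sD$ is $\kappa$\+accessible, with $\kappa$\+presentable objects exactly those glued families $(\sigma^n)_n$ for which every $\sigma^n$ is a short exact sequence of $\kappa$\+presentable flats. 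Such an object is, in particular, a pure acyclic complex of $\kappa$\+presentable flat modules (and its cocycles are $\kappa$\+presentable flats as well).

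Finally, since the forgetful functor from $\sD$ to each factor $R\Modl_\fl$ preserves $\kappa$\+directed colimits, the underlying complex of a $\kappa$\+directed colimit computed in $\sD$ is the termwise colimit of the underlying complexes; thus the $\kappa$\+accessibility of $\sD$ says precisely that every pure acyclic complex of flat modules is a $\kappa$\+directed colimit of pure acyclic complexes of $\kappa$\+presentable flat modules, which is the assertion of the corollary. The only point requiring genuine care---rather than a serious obstacle---is the identification of the isomorpher $\sD$ with the category of pure acyclic complexes: one must check that the gluing data reconstitutes the differential and the chain morphisms faithfully, and that $\kappa$\+directed colimits taken in $\sD$ indeed coincide with the naive termwise colimits of complexes (which again are pure acyclic, directed colimits being exact and preserving flatness).
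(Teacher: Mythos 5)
Your proposal is correct and follows essentially the same route as the paper: both realize the category of pure acyclic complexes of flat modules as the isomorpher of the cokernel-term functor $P$ and the shifted kernel-term functor $Q$ on $\prod_{n\in\boZ}R\Modl_\fl^\ses$, and then invoke Proposition~\ref{product-proposition}, Corollary~\ref{short-exact-of-flat-modules}, and Remark~\ref{isomorpher-remark}. Your additional checks (that the gluing data reconstructs the differential and that colimits in the isomorpher are termwise) are exactly the ``points requiring care'' that the paper leaves implicit.
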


\begin{proof}
 The point is that a pure acyclic complex of flat $R$\+modules $F^\bu$
is the same thing as a collection of short exact sequences of flat
$R$\+modules $0\rarrow G^n\rarrow F^n\rarrow H^n\rarrow0$ together
with a collection of isomorphisms $H^n\simeq G^{n+1}$, \,$n\in\boZ$.
 This means that the category of pure acyclic complexes of flat
$R$\+modules can be constructed from the category of short exact
sequences of flat $R$\+modules $R\Modl_\fl^\ses$ using Cartesian
products (as in Section~\ref{product-secn}) and the isomorpher
construction from Remark~\ref{isomorpher-remark}.

 Specifically, put $\sH=\prod_{n\in\boZ}R\Modl_\fl^\ses$ and
$\sG=\prod_{n\in\boZ}R\Modl_\fl$.
 Let $P\:\sH\rarrow\sG$ be the functor taking a collection of
short exact sequences $(0\to G^n\to F^n\to H^n\to0)_{n\in\boZ}$ to
the collection of modules $(H^n)_{n\in\boZ}$, and let
$Q\:\sH\rarrow\sG$ be the functor taking the same collection of short
exact sequences to the collection of modules $(G^{n+1})_{n\in\boZ}$.
 Then the resulting isomorpher category $\sD$ is the category of
pure acyclic complexes of flat $R$\+modules.
 Given the results of Proposition~\ref{flat-modules-accessible}
and Corollary~\ref{short-exact-of-flat-modules}, it follows from
Proposition~\ref{product-proposition} and Remark~\ref{isomorpher-remark}
that the category $\sD$ is $\kappa$\+accessible and the pure acyclic
complexes of $\kappa$\+presentable flat $R$\+modules are precisely all
the $\kappa$\+presentable objects of~$\sD$.
\end{proof}

\bigskip

\end{document}